\documentclass[12pt, letterpaper]{amsart}
\usepackage[utf8]{inputenc}
\usepackage[margin=1in]{geometry}

\usepackage{amssymb}
\usepackage{amsfonts}
\usepackage{amsmath}
\usepackage{lmodern}
\usepackage{microtype}
\usepackage{mathtools}
\usepackage[dvipsnames]{xcolor}
\usepackage[
    colorlinks,
    linkcolor=blue,
    citecolor=blue,
    urlcolor=blue,
    implicit=true,
    breaklinks=true
]{hyperref}
\usepackage[noabbrev,capitalize]{cleveref}
\usepackage[shortlabels]{enumitem}

\bibliographystyle{amsplain}
\newcommand{\R}{\mathbb{R}}

\newcommand{\Efrak}{\mathfrak{E}}

\newcommand{\Ucal}{\mathcal{U}}
\newcommand{\Pcal}{\mathcal{P}}
\newcommand{\Pcalp}[1]{\mathcal{P}_{#1}}
\newcommand{\PcalW}{\mathcal{P}_2}

\DeclareMathOperator{\proj}{proj}
\newcommand{\dd}{\mathrm{d}}
\newcommand{\Tr}{\mathrm{Tr}}

\DeclarePairedDelimiter{\norm}{\lVert}{\rVert}
\DeclarePairedDelimiter{\abs}{\lvert}{\rvert}
\DeclarePairedDelimiterX{\scpr}[2]{\langle}{\rangle}{#1, #2} 

\DeclareMathOperator*{\argmax}{arg\,max}
\DeclareMathOperator*{\Law}{Law}

\theoremstyle{plain}
\newtheorem{theorem}{Theorem}[section]
\newtheorem{proposition}[theorem]{Proposition}

\theoremstyle{definition}
\newtheorem{definition}[theorem]{Definition}
\newtheorem{assumption}[theorem]{Assumption}

\theoremstyle{remark}
\newtheorem{remark}[theorem]{Remark}

\numberwithin{equation}{section} 

\begin{document}

\title[Mean-Field Theory of $\Theta$-Expectations]{A Mean-Field Theory of $\Theta$-Expectations}

\author{Qian Qi}
\address{Peking University, Beijing 100871, China}
\email{qiqian@pku.edu.cn}
\date{\today}

\begin{abstract}
The canonical theory of sublinear expectations, a foundation of stochastic calculus under ambiguity, is insensitive to the non-convex geometry of primitive uncertainty models. This paper develops a new stochastic calculus for a structured class of such non-convex models. We introduce a class of fully coupled Mean-Field Forward-Backward Stochastic Differential Equations where the BSDE driver is defined by a pointwise maximization over a law-dependent, non-convex set. Mathematical tractability is achieved via a uniform strong concavity assumption on the driver with respect to the control variable, which ensures the optimization admits a unique and stable solution. A central contribution is to establish the Lipschitz stability of this optimizer from primitive geometric and regularity conditions, which underpins the entire well-posedness theory. We prove local and global well-posedness theorems for the FBSDE system. The resulting valuation functional, the $\Theta$-Expectation, is shown to be dynamically consistent and, most critically, to violate the axiom of sub-additivity. This, along with its failure to be translation invariant, demonstrates its fundamental departure from the convex paradigm. This work provides a rigorous foundation for stochastic calculus under a class of non-convex, endogenous ambiguity.
\end{abstract}

\maketitle
\tableofcontents

\section{Introduction}

\subsection{The Identifiability Impasse of Convex Models of Ambiguity}
The modern mathematical theory of stochastic calculus under ambiguity is dominated by the seminal framework of sublinear expectations and the associated G-calculus, pioneered by \cite{Peng2019}. This theory furnishes a robust and mathematically elegant generalization of classical Itô calculus by replacing the linear expectation operator with a functional satisfying sub-additivity and positive homogeneity. The Daniell-Stone representation theorem, a foundation of convex analysis, guarantees that any such functional admits a dual representation as the supremum of linear expectations taken over a weakly compact and \emph{convex} set of probability measures, $\Pcal$ (see, e.g., \cite{Rockafellar1970}).

This foundation, while mathematically powerful, imposes a profound structural limitation which we term the \textbf{identifiability impasse}. The valuation functional, being the support function of the set of priors $\Pcal$, is intrinsically insensitive to the underlying geometry of the uncertainty set; it depends only on its convex hull, $\text{conv}(\Pcal)$. Consequently, if the primitive set of plausible models, derived perhaps from statistical inference or first principles, is inherently non-convex, the sublinear framework cannot distinguish it from its convexification. This is a critical deficiency in applications where the specific topological and geometric features of the uncertainty set, such as bi-modal beliefs, disjoint scenario clusters, or other holes in the model space, are of first-order importance. The very structure of the agent's uncertainty is erased by the valuation mechanism.

\subsection{A Tractable Calculus for a Class of Non-Convex Ambiguity Models}
This paper develops a new mathematical paradigm that circumvents this identifiability impasse for a broad and structured class of non-convex ambiguity models. We depart from the axiom of sub-additivity and construct a valuation theory directly from the primitive, non-convex set of models. A related paper, the $\theta$-expectation (see \cite{qi2025theorythetaexpectations}), demonstrates a theory for a non-convex yet exogenously given uncertain set. Beyond \cite{qi2025theorythetaexpectations}, this paper is founded on a novel class of Forward-Backward Stochastic Differential Equations (FBSDEs) to endogenize the non-convex uncertain set, which we term \textbf{Mean-Field $\Theta$-FBSDEs} (see \cref{def:mf_theta_fbsde}).

This framework is characterized by two principal innovations. First, the driver of our BSDE is determined at each instant $(t,\omega)$ through a direct, pointwise maximization over the primitive uncertainty set $\Ucal$. The control variable, denoted by $a$, represents the agent's choice of a local dynamic model, and the resulting valuation is thus linked by construction to the full geometry of $\Ucal$. Second, we introduce a sophisticated mean-field interaction whereby the uncertainty set itself, denoted $\Ucal_\theta$, depends on a parameter $\theta = g(\mu_t)$ determined by the law $\mu_t$ of the solution's backward component (the value process). This renders the ambiguity \emph{endogenous}, providing a natural and powerful mechanism for modeling systemic risk, where the aggregate state of the system shapes individual perceptions of uncertainty.

The central mathematical obstacle is that optimization over a non-convex set is generically ill-posed and unstable. The crucial innovation of this work is the identification of a structural condition that renders the problem tractable: a \textbf{uniform strong concavity assumption} on the driver $F(t,x,y,z,a,\mu)$ with respect to the control variable $a$ (\cref{ass:mf_fbsde_standing}(iv)). This is a significant modeling choice. It represents a fundamental trade-off: we relax the convexity of the domain of optimization (the set $\Ucal$) but impose a convexity-like condition on the objective function itself. This implies that our framework does not generalize theories like Peng's G-calculus, whose canonical driver is linear in the control. Rather, our theory carves out a different, complementary class of problems, such as those involving quadratic regularization, risk-penalties, or utility maximization, where strong concavity arises naturally. This assumption guarantees that the optimization problem admits a \emph{unique} maximizer, which we denote $a^*$.

The existence of a unique maximizer is insufficient; one must establish its stable dependence on the state variables to control the system's dynamics. A principal contribution of this paper is to prove, under a set of verifiable primitive conditions on the geometry of the uncertainty sets and the regularity of the constraints (\cref{ass:boundary_and_ndg}), that the resulting optimizer map $a^*$ is uniformly Lipschitz continuous (\cref{prop:lipschitz_stability}). This result is the foundation of our well-posedness theory. The proof is a delicate application of sensitivity analysis for parametric nonlinear programs, relying on a quantitative version of the implicit function theorem applied to the Karush-Kuhn-Tucker (KKT) system. This fundamental stability property ensures that the driver, obtained by substituting the optimizer back into the primitive driver, inherits the Lipschitz regularity required for modern FBSDE theory.

\subsection{Main Contributions and Outline of the Paper}
With these foundational results in place, we construct a complete theory for the Mean-Field $\Theta$-FBSDE system. Our main contributions are as follows:

\begin{enumerate}
    \item \textbf{A New Class of FBSDEs.} We introduce and formalize the Mean-Field $\Theta$-FBSDE (\cref{def:mf_theta_fbsde}), a fully coupled system where the dynamics of a forward process and a backward valuation are intertwined. The system's evolution is driven by a pointwise optimization over a law-dependent, non-convex set, providing a novel model for endogenous and non-convex ambiguity.

    \item \textbf{Rigorous Well-Posedness Results.} We establish local-in-time existence and uniqueness of solutions (\cref{thm:mf_fbsde_local_wellposedness}). The proof proceeds via a contraction mapping argument on a carefully chosen product space of stochastic processes, where the Lipschitz stability of the optimizer map (\cref{prop:lipschitz_stability}) is the essential ingredient. We then prove a global-in-time well-posedness result (\cref{thm:global_wellposedness}) under an additional, but standard, strong monotonicity assumption on the system's coefficients.

    \item \textbf{A Dynamically Consistent Calculus Beyond Convexity.} We define the \textbf{$\Theta$-Expectation} as the valuation functional generated by the unique solution to our BSDE. We prove it satisfies the crucial properties of dynamic consistency and monotonicity. Most importantly, we provide a rigorous proof that it fails to be sub-additive (\cref{prop:theta_exp_properties}), the defining characteristic of the convex paradigm. A sharp counterexample, which also demonstrates the failure of translation invariance, shows the driver can be locally convex, definitively demonstrating the departure of our theory from convex frameworks and its ability to resolve the identifiability impasse.

    \item \textbf{Connections to Non-Linear PDEs.} We establish a nonlinear Feynman-Kac representation (\cref{sec:feynman_kac}), connecting our FBSDE to a class of highly non-local, Hamilton-Jacobi-Bellman-type PDEs (\cref{eq:theta_hjb_pde}). Acknowledging the analytical difficulty of such equations, we frame the result in the modern language of \emph{viscosity solutions} (\cref{thm:feynman_kac_detailed}). The paper also provides a formal derivation of the associated \textbf{Master Equation} on the Wasserstein space (\cref{sec:master_equation}), highlighting the deep structural novelty of the framework. We explicitly note that a rigorous analysis of this infinite-dimensional PDE is a formidable open problem and a direction for future research.
\end{enumerate}

This work provides a rigorous foundation for stochastic calculus under a class of non-convex and endogenous ambiguity models. It demonstrates that it is possible to build a tractable and identifiable theory where the fine-grained geometry of uncertainty is of first-order importance, thereby opening new avenues for modeling complex systems in physics, finance, economics, and control theory.

\section{Preliminaries: Function Spaces and the Wasserstein Space}

We begin by establishing the mathematical setting with precision. Let $T>0$ be a finite and fixed time horizon. We work on a complete probability space $(\Omega, \mathcal{F}, P_0)$, which is assumed to be rich enough to support a standard $d$-dimensional Brownian motion $B = (B_t)_{t \in [0,T]}$. The filtration $\mathbb{F}=(\mathcal{F}_t)_{t \in [0,T]}$ is taken to be the $P_0$-completion of the natural filtration generated by $B$, thereby satisfying the usual conditions of right-continuity and completeness. We denote by $\mathbb{E}_{P_0}$ the expectation with respect to the reference measure $P_0$. All notions of almost sure ($P_0$-a.s.) or almost everywhere (a.e.) equality and inequality are with respect to $P_0$ or the product measure $dt \otimes dP_0$ on $[0,T]\times\Omega$, respectively.

\subsection{Spaces of Stochastic Processes}
The analytical framework for Forward-Backward Stochastic Differential Equations is built upon specific Banach spaces of stochastic processes. The norms on these spaces are designed to provide control over both the temporal and stochastic fluctuations of the solution paths.

\begin{definition}[Spaces of Processes]\label{def:process_spaces}
For a given real number $p \ge 1$ and a separable Hilbert space $(H, \norm{\cdot}_H)$, we define the following spaces of $\mathbb{F}$-adapted processes:
\begin{itemize}
    \item The space $\mathcal{S}^p([0,T]; H)$ consists of $H$-valued, continuous processes $X=(X_t)_{t\in[0,T]}$ such that
    \[ \norm{X}_{\mathcal{S}^p} := \left(\mathbb{E}_{P_0}\left[\sup_{t \in [0,T]} \norm{X_t}_H^p\right]\right)^{1/p} < \infty. \]
    \item The space $\mathcal{H}^p([0,T]; H)$ consists of $H$-valued, $\mathbb{F}$-predictable processes $Z=(Z_t)_{t\in[0,T]}$ such that
    \[ \norm{Z}_{\mathcal{H}^p} := \left(\mathbb{E}_{P_0}\left[\left(\int_0^T \norm{Z_s}_H^2 ds\right)^{p/2}\right]\right)^{1/p} < \infty. \]
    \item For a separable Banach space $(E_A, \norm{\cdot}_{E_A})$, the space $\mathcal{L}^p([0,T]; E_A)$ consists of $E_A$-valued, $\mathbb{F}$-predictable processes $A=(A_t)_{t\in[0,T]}$ such that
    \[ \norm{A}_{\mathcal{L}^p} := \left(\mathbb{E}_{P_0}\left[\int_0^T \norm{A_s}_{E_A}^p ds\right]\right)^{1/p} < \infty. \]
\end{itemize}
When the context makes the domain $[0,T]$ or the space $H$ (typically $\R^k$) clear, we may abbreviate the notation (e.g., $\mathcal{S}^p$).
\end{definition}

\begin{proposition}\label{prop:banach_spaces}
The spaces $\mathcal{S}^p([0,T]; \R^k)$ and $\mathcal{H}^p([0,T]; \R^k)$ are real Banach spaces for any $p \ge 1$ and $k \ge 1$.
\end{proposition}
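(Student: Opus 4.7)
The plan is to verify the four Banach space axioms (vector space structure, well-defined norm, triangle inequality, completeness), with the first three being routine and the completeness argument being the substantive step. In both cases I would identify processes that are indistinguishable (resp.\ equal $dt \otimes dP_0$-a.e.) so that the seminorm becomes a genuine norm. That $\mathcal{S}^p$ and $\mathcal{H}^p$ are vector spaces follows because linear combinations of continuous (resp.\ predictable) adapted processes remain continuous (resp.\ predictable) adapted, and the triangle inequality is an immediate consequence of Minkowski's inequality applied to the underlying Lebesgue spaces $L^p(\Omega)$ and $L^p(\Omega; L^2([0,T]))$, after noting that $\sup_t \norm{X_t + Y_t}_H \le \sup_t \norm{X_t}_H + \sup_t \norm{Y_t}_H$.

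For $\mathcal{S}^p$, given a Cauchy sequence $(X^n) \subset \mathcal{S}^p$, I would extract a subsequence $(X^{n_k})$ with $\norm{X^{n_{k+1}} - X^{n_k}}_{\mathcal{S}^p} \le 2^{-k}$. A Borel--Cantelli argument applied to the events $\{\sup_{t\in[0,T]} \norm{X^{n_{k+1}}_t - X^{n_k}_t}_H > 2^{-k/2}\}$ then yields a $P_0$-null set off of which the subsequence is Cauchy in the sup-norm on $C([0,T]; H)$; the pathwise uniform limit $X$ is therefore continuous, and adaptedness passes to the pointwise limit. Fatou's lemma applied to $\sup_t \norm{X^{n_k}_t - X_t}_H^p$ upgrades this to convergence of the full Cauchy sequence in the $\mathcal{S}^p$-norm.

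For $\mathcal{H}^p$, I would view the space as a closed subspace of the Bochner-type space $L^p(\Omega; L^2([0,T]; H))$, which is a Banach space by standard results (since $L^2([0,T]; H)$ is a separable Hilbert space). Given a Cauchy sequence $(Z^n) \subset \mathcal{H}^p$, completeness in the ambient space gives a limit $Z \in L^p(\Omega; L^2([0,T]; H))$. Passing to a subsequence $Z^{n_k} \to Z$ converging $dt \otimes dP_0$-a.e., one obtains a predictable representative of $Z$ by, e.g., redefining it to be zero on the exceptional null set; this works because the class of $\mathbb{F}$-predictable processes is closed under $dt \otimes dP_0$-a.e. limits. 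Convergence of the full sequence in the $\mathcal{H}^p$-norm then follows.

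The main technical point, and the only place requiring care beyond appeal to standard Banach-space machinery, is verifying that the regularity structure (pathwise continuity and adaptedness for $\mathcal{S}^p$, and predictability for $\mathcal{H}^p$) survives the limit passage. In both cases the remedy is the same: pass to a rapidly convergent subsequence so that convergence holds in a strong pointwise sense, and use the fact that continuity, adaptedness, and predictability are all preserved under such limits up to modification on null sets.
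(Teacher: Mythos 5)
Your proposal is correct and follows essentially the same route as the paper: $\mathcal{H}^p$ is treated via identification with (a closed, predictable-measurable subspace of) the Bochner space $L^p(\Omega; L^2([0,T];H))$, and $\mathcal{S}^p$ is handled by extracting a subsequence converging $P_0$-a.s.\ uniformly in $t$ and upgrading via Fatou. The only cosmetic difference is that you unpack the Borel--Cantelli argument behind the fast-subsequence extraction explicitly, whereas the paper invokes it as a cited "standard result" about uniform convergence in probability; you are also slightly more careful in explicitly noting that the predictable processes form a closed subspace of the Bochner space, a point the paper glosses when calling the map an isometry.
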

\begin{proof}
This is a foundational result in stochastic analysis, whose proof we include for completeness as it relies on foundations of the theory. Let $H=\R^k$.

\textbf{1. Completeness of $\mathcal{H}^p([0,T]; H)$.}
The space $\mathcal{H}^p$ is isometric to the Bochner space $L^p(\Omega; L^2([0,T]; H))$, where a process $Z$ is identified with the random variable $\omega \mapsto (t \mapsto Z_t(\omega))$. The completeness of Bochner spaces is a classical result of functional analysis. Thus, $\mathcal{H}^p$ is a Banach space.

\textbf{2. Completeness of $\mathcal{S}^p([0,T]; H)$.}
This is more subtle. Let $(X^n)_{n\ge 1}$ be a Cauchy sequence in $\mathcal{S}^p$.
The norm is $\norm{X}_{\mathcal{S}^p} = \norm*{\sup_{t \in [0,T]} \norm{X_t}_H}_{L^p(\Omega)}$. Since $L^p(\Omega)$ is complete, the sequence of real-valued random variables $M^n := \sup_{t \in [0,T]} \norm{X^n_t}_H$ converges in $L^p(\Omega)$ to some $M \in L^p(\Omega)$.
For any fixed $t \in [0,T]$, $\norm{X^n_t - X^m_t}_H \le \sup_{s \in [0,T]} \norm{X^n_s - X^m_s}_H$. Taking the $L^p$ norm of both sides shows that $(X^n_t)_{n\ge 1}$ is a Cauchy sequence in $L^p(\Omega; H)$. By completeness of $L^p(\Omega;H)$, there exists a limit process $X_t$ for each $t$.

We must show that this limit process $X$ has a continuous version which is the limit of $(X^n)$ in the $\mathcal{S}^p$ norm.
By the triangle inequality for the $\mathcal{S}^p$ norm,
\[ \left\| \sup_t \norm{X_t^n}_H \right\|_{L^p} \le \left\| \sup_t \norm{X_t^m}_H \right\|_{L^p} + \left\| \sup_t \norm{X_t^n - X_t^m}_H \right\|_{L^p}. \]
Since $(X^n)$ is Cauchy, the sequence of norms is bounded.
Let $\tau_{n,m}(\epsilon) := \inf\{t \in [0,T] : \norm{X^n_t - X^m_t}_H > \epsilon\}$ be a stopping time. By Chebyshev's inequality and the Cauchy property of $(X^n)$:
\[ P_0(\sup_t \norm{X^n_t - X^m_t}_H > \epsilon) \le \frac{1}{\epsilon^p} \mathbb{E}_{P_0}\left[\sup_t \norm{X^n_t - X^m_t}_H^p\right] \to 0 \quad \text{as } n,m \to \infty. \]
This shows that $X^n$ converges uniformly on $[0,T]$ in probability. A standard result states that this implies the existence of a subsequence $(X^{n_k})$ and a continuous process $X$ such that $X^{n_k}_t \to X_t$ a.s. for all $t$ and $\sup_t \norm{X^{n_k}_t - X_t}_H \to 0$ a.s.

Now we show convergence of the full sequence in $\mathcal{S}^p$. For the subsequence $(X^{n_k})$, we have $\sup_t \norm{X^{n_k}_t - X_t}_H \to 0$ a.s. By the Cauchy property, for any $\epsilon > 0$, there exists $N$ such that for $n_k, m > N$, $\norm{X^{n_k} - X^m}_{\mathcal{S}^p} < \epsilon$. Applying Fatou's lemma to the variable $\omega$ as $m\to\infty$ along the convergent subsequence gives
\[ \mathbb{E}_{P_0}\left[ \liminf_{m\to\infty} \sup_t \norm{X^{n_k}_t - X^m_t}_H^p \right] \le \liminf_{m\to\infty} \mathbb{E}_{P_0}\left[ \sup_t \norm{X^{n_k}_t - X^m_t}_H^p \right] \le \epsilon^p. \]
Since $\sup_t \norm{X^{n_k}_t - X_t}_H \le \liminf_{m\to\infty} \sup_t \norm{X^{n_k}_t - X^m_t}_H$ a.s., we have $\norm{X^{n_k} - X}_{\mathcal{S}^p} \le \epsilon$.
This shows that the subsequence $(X^{n_k})$ converges to $X$ in $\mathcal{S}^p$. Since the original sequence $(X^n)$ is Cauchy and has a convergent subsequence, the entire sequence must converge to the same limit $X$. The limit process $X$ is continuous and adapted, hence $X \in \mathcal{S}^p$. This establishes completeness.
\end{proof}

\subsection{The Wasserstein Space of Probability Measures}
Our mean-field framework requires a topological and metric structure on the space of probability measures. The Wasserstein distance provides the natural geometry for problems involving optimal transport.

\begin{definition}[Wasserstein Space] \label{def:wasserstein}
Let $p \ge 1$ and let $(E, d_E)$ be a Polish space. We denote by $\Pcalp{p}(E)$ the set of Borel probability measures $\mu$ on $E$ with a finite $p$-th moment, i.e., $\int_{E} d_E(x_0, x)^p \dd\mu(x) < \infty$ for some (and hence for any) $x_0 \in E$. For $\mu, \nu \in \Pcalp{p}(E)$, the \textbf{$p$-Wasserstein distance} is defined as
\begin{equation} \label{eq:wasserstein_def}
 W_p(\mu, \nu) := \left( \inf_{\pi \in \Pi(\mu, \nu)} \int_{E \times E} d_E(x, y)^p \dd\pi(x, y) \right)^{1/p},
\end{equation}
where $\Pi(\mu, \nu)$ is the set of all joint probability measures (couplings) on the product space $E \times E$ having $\mu$ and $\nu$ as their first and second marginals, respectively.
\end{definition}

In this paper, we focus on $E=\R^k$ with the Euclidean metric, and we primarily use the case $p=2$, for which we use the shorthand $\PcalW(\R^k) \equiv \Pcalp{2}(\R^k)$. The space $(\PcalW(\R^k), W_2)$ possesses a rich geometric structure.

\begin{proposition}\label{prop:polish_space}
For $k \ge 1$ and $p \ge 1$, the metric space $(\Pcalp{p}(\R^k), W_p)$ is a Polish space (a complete, separable metric space). Furthermore, convergence in $W_p$ is equivalent to weak convergence of measures plus convergence of the $p$-th moments.
\end{proposition}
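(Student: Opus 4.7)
The plan is to handle the three assertions---separability, completeness, and the convergence characterization---in an order where each step leverages the previous. I would prove separability first, then the equivalence of convergence notions, and finally deploy that equivalence as the analytic engine for completeness.

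For separability, the natural countable dense set consists of measures of the form $\sum_{i=1}^N q_i \delta_{x_i}$ with $N \in \mathbb{N}$, rational weights $q_i \ge 0$ summing to one, and rational atoms $x_i \in \mathbb{Q}^k$. Given $\mu \in \Pcalp{p}(\R^k)$ and $\varepsilon > 0$, use the finite $p$-th moment to truncate outside a large ball $B_R$ at $W_p$-cost $O(\varepsilon)$, partition $B_R$ into small cubes, and assign to each cube a rational representative with a rational mass close to $\mu$'s mass on that cube; the ``stay-in-place'' coupling then yields a direct $W_p$ estimate. For the forward direction of the convergence characterization, $W_1 \le W_p$ by Jensen's inequality, so weak convergence follows from Kantorovich--Rubinstein duality, while the $L^p$-triangle inequality applied to a reference point on an optimal coupling gives convergence of the $p$-th moments. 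For the converse, invoke Skorokhod's representation theorem to realize $\mu_n$ and $\mu$ as the laws of random variables $X_n \to X$ almost surely, use $\mathbb{E}\abs{X_n}^p \to \mathbb{E}\abs{X}^p$ together with a.s.\ convergence to deduce uniform integrability of $\abs{X_n - X}^p$, conclude $\mathbb{E}\abs{X_n - X}^p \to 0$ via Vitali's theorem, and observe that $\Law(X_n, X)$ is an admissible coupling that bounds $W_p^p(\mu_n, \mu)$.

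Completeness then follows from these ingredients. A Cauchy sequence $(\mu_n)$ in $W_p$ has uniformly bounded $p$-th moments (by the triangle inequality against a fixed reference measure), and is therefore tight; Prokhorov's theorem extracts a weakly convergent subsequence with some limit $\mu$, whose $p$-th moment is finite by Fatou's lemma. Passing to the limit in a coupling between $\mu_{n_k}$ and $\mu_{n_\ell}$ as $\ell \to \infty$ combined with the Cauchy bound yields $W_p(\mu_{n_k}, \mu) \to 0$, and the Cauchy property of the original sequence upgrades the subsequential convergence to convergence of the full sequence.

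The principal obstacle is the converse direction of the convergence characterization: weak convergence alone cannot control mass escaping to infinity, and the $p$-th moment convergence must be leveraged to obtain uniform integrability of $\abs{x}^p$ under $\mu_n$. This is delicate because the cost $\abs{x-y}^p$ is unbounded, so one cannot simply test weak convergence against it. The Skorokhod-plus-Vitali argument resolves this by transferring the problem to a single probability space where pointwise convergence and uniform integrability combine cleanly; the dual-formulation alternative requires careful truncation of the unbounded cost and is notably more technical.
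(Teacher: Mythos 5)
The paper does not actually prove this proposition; it simply cites \cite[Theorem 6.18]{Villani2009} and \cite[Theorem 5.10]{Ambrosio2008} and moves on. Your sketch is therefore a genuine addition rather than a paraphrase, and it is essentially the standard argument found in those references: rational finitely-supported measures for separability, Kantorovich--Rubinstein plus the $L^p$-triangle inequality for the forward implication, Skorokhod representation plus Vitali for the converse, and Prokhorov plus lower semicontinuity for completeness. All of these steps are sound; in particular the delicate uniform-integrability step (a.s.\ convergence of $\abs{X_n}^p$ together with $\mathbb{E}\abs{X_n}^p\to\mathbb{E}\abs{X}^p$ gives $L^1$-convergence of $\abs{X_n}^p$ by the Scheff\'e-type lemma, hence uniform integrability of the dominating sequence $2^{p-1}(\abs{X_n}^p+\abs{X}^p)$) is handled correctly.

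One small inaccuracy worth fixing in the framing: you say completeness ``deploys the equivalence as the analytic engine,'' but your completeness argument does not and cannot use the convergence characterization directly, because after Prokhorov you only have weak convergence $\mu_{n_k}\rightharpoonup\mu$ together with a Fatou bound $\int\abs{x}^p\,\dd\mu\le\liminf\int\abs{x}^p\,\dd\mu_{n_k}$, not convergence of the $p$-th moments. What you actually use is the separate fact that $W_p$ is lower semicontinuous under weak convergence in one argument: take optimal couplings $\pi_\ell\in\Pi(\mu_{n_k},\mu_{n_\ell})$, note they are tight, extract a weak limit $\pi\in\Pi(\mu_{n_k},\mu)$, and apply Fatou/portmanteau to the nonnegative lower semicontinuous cost $\abs{x-y}^p$ to get $W_p^p(\mu_{n_k},\mu)\le\int\abs{x-y}^p\,\dd\pi\le\liminf_\ell W_p^p(\mu_{n_k},\mu_{n_\ell})\le\varepsilon^p$. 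You gesture at this with ``passing to the limit in a coupling,'' which is correct in substance; just be aware it is a distinct ingredient from the weak-plus-moments characterization, not a corollary of it. With that clarification the proof is complete and correct.
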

\begin{proof}
This is a classical result in optimal transport theory. A full proof is substantial and can be found in foundational texts such as \cite[Theorem 6.18]{Villani2009} or \cite[Theorem 5.10]{Ambrosio2008}.
\end{proof}

\subsection{Differentiability on Wasserstein Space}
The formal derivation of the Master Equation in Section \ref{sec:master_equation} requires a notion of differentiation for functions defined on the Wasserstein space. The standard concept, which lifts Fréchet differentiation from an $L^2$ space, was pioneered by P.-L. Lions in his lectures at the Collège de France and systematically developed by Cardaliaguet, Carmona, Delarue, and others.

\begin{definition}[Lions Derivative] \label{def:lions_derivative}
A function $U: \PcalW(\R^k) \to \R$ is said to be \textbf{L-differentiable} at a measure $\mu \in \PcalW(\R^k)$ if the following holds:
For any probability space $(\Omega, \mathcal{F}, P)$ and any random variable $X \in L^2(\Omega, \mathcal{F}, P; \R^k)$ with $\Law(X) = \mu$, the lifted map $\tilde{U}: L^2(\Omega, \mathcal{F}, P; \R^k) \to \R$, defined by $\tilde{U}(Y) := U(\Law(Y))$, is Fréchet differentiable at $X$.

The Fréchet derivative $D\tilde{U}(X)$ is a continuous linear functional on $L^2(\Omega, \mathcal{F}, P; \R^k)$. By the Riesz representation theorem, there exists a unique random variable $\xi \in L^2(\Omega, \mathcal{F}, P; \R^k)$ such that for all $Y \in L^2(\Omega, \mathcal{F}, P; \R^k)$,
\[ D\tilde{U}(X)[Y] = \mathbb{E}\left[ \scpr{\xi}{Y} \right]. \]
A fundamental result of the theory is that this representing random variable $\xi$ is necessarily measurable with respect to the $\sigma$-algebra generated by $X$, i.e., $\xi \in L^2(\Omega, \sigma(X), P; \R^k)$. By the Doob-Dynkin lemma, there exists a Borel-measurable function $\partial_\mu U(\mu, \cdot): \R^k \to \R^k$ such that $\xi = \partial_\mu U(\mu, X)$, $P$-a.s. This function $\partial_\mu U(\mu, \cdot)$, which is uniquely defined up to a $\mu$-null set, is called the \textbf{Lions derivative} (or L-derivative) of $U$ at $\mu$.
\end{definition}

\begin{remark}[Characterization and Independence]\label{rem:lions_derivative_characterization}
The definition is robust in that the existence of the derivative and the resulting function $\partial_\mu U(\mu, \cdot)$ are independent of the choice of probability space or the specific random variable $X$ used for the lift. If $U$ is L-differentiable, then for any $X$ with $\Law(X)=\mu$ and any $Y \in L^2(\Omega, \sigma(X), P; \R^k)$, the directional derivative of $U$ can be expressed as:
\[ \lim_{h\to 0} \frac{U(\Law(X+hY)) - U(\Law(X))}{h} = \mathbb{E}_{P}\left[ \scpr{\partial_\mu U(\mu, X)}{Y} \right]. \]
The function $\partial_\mu U(\mu, \cdot)$ can be interpreted as the gradient of $U$ in the direction of a change in the measure $\mu$ at the point $x$. A comprehensive treatment of this theory, including the proofs of the key structural results, can be found in the notes from \cite{Cardaliaguet2013} or the two-volume monograph by \cite{CarmonaDelarue2018}.
\end{remark}

\section{The Mean-Field \texorpdfstring{$\Theta$}{Theta}-FBSDE System}
\label{sec:theta_fbsde_system}

This section introduces the central mathematical object of our study: a fully coupled Forward-Backward Stochastic Differential Equation system where ambiguity enters through a pointwise optimization over a law-dependent, non-convex set. This structure is designed to capture endogenous uncertainty, where the agents' collective behavior, reflected in the law of the value process, dynamically shapes the set of perceived plausible models.

Let $(E_A, \norm{\cdot}_{E_A})$ be a separable Banach space, which will serve as the space of control variables. A control variable $a \in E_A$ represents a specific model choice for the local dynamics, for instance, a drift or volatility parameter.

\begin{definition}[Endogenous Uncertainty Set]
\label{def:endogenous_uncertainty_set}
Let $\Theta$ be a compact metric space representing the possible regimes of systemic ambiguity. Let $g: \Pcal_2(\R) \to \Theta$ be a continuous map. For each $\theta \in \Theta$, let $\Ucal_\theta$ be a non-empty, compact subset of the control space $E_A$. We refer to the law-dependent set $\Ucal_{g(\mu)} \subset E_A$ as the \textbf{endogenous uncertainty set}. The map $g$ translates the state of the system (the law of the value process $\mu$) into a specific ambiguity regime $\theta$, which in turn selects the set of available models $\Ucal_\theta$.
\end{definition}

\begin{definition}[The Mean-Field $\Theta$-FBSDE]
\label{def:mf_theta_fbsde}
Let $x_0 \in \R^k$ be the initial state and $\Phi: \R^k \to \R$ be the terminal cost function. Let the coefficients be functions $b: [0,T] \times \Omega \times \R^k \times E_A \times \Pcal_2(\R) \to \R^k$, $\sigma: [0,T] \times \Omega \times \R^k \times E_A \times \Pcal_2(\R) \to \R^{k \times d}$, and $F: [0,T] \times \Omega \times \R^k \times \R \times \R^{d} \times E_A \times \Pcal_2(\R) \to \R$. A solution to the Mean-Field $\Theta$-FBSDE is a quadruple of $\mathbb{F}$-adapted processes $(X, Y, Z, A)$ valued in $\R^k \times \R \times \R^{d} \times E_A$ that satisfies, $P_0$-a.s., the following coupled system:
\begin{enumerate}
    \item \textbf{(Forward Evolution):} For all $t \in [0,T]$,
    \begin{equation} \label{eq:mf_fbsde_forward}
        X_t = x_0 + \int_0^t b(s, X_s, A_s, \mu_s) \dd s + \int_0^t \sigma(s, X_s, A_s, \mu_s) \dd B_s.
    \end{equation}
    
    \item \textbf{(Backward Evolution):} For all $t \in [0,T]$,
    \begin{equation} \label{eq:mf_fbsde_backward}
        Y_t = \Phi(X_T) + \int_t^T F(s, X_s, Y_s, Z_s, A_s, \mu_s) \dd s - \int_t^T Z_s \dd B_s.
    \end{equation}
    In both equations, $\mu_s := \Law_{P_0}(Y_s)$ is the law of the value process $Y$ at time $s$.

    \item \textbf{(Pointwise Optimality):} For almost every $(t,\omega) \in [0,T] \times \Omega$,
    \begin{equation} \label{eq:mf_fbsde_optimality}
        A_t(\omega) \in \underset{a \in \Ucal_{g(\mu_t)}}{\argmax} \; F(t, \omega, X_t(\omega), Y_t(\omega), Z_t(\omega), a, \mu_t).
    \end{equation}
\end{enumerate}
\end{definition}

\begin{remark}[On the Nature of the Coupling]
The system defined above exhibits multiple, deeply intertwined layers of coupling, which represent the primary source of its mathematical complexity.
\begin{enumerate}[(i)]
    \item \textbf{Standard FBSDE Coupling:} The forward process $X$ appears in the driver of the backward equation.
    \item \textbf{Mean-Field Coupling:} The law of the backward process, $\mu_s = \Law(Y_s)$, appears as an argument in the coefficients of both the forward and backward equations. This non-local dependence on the entire distribution of the value process is characteristic of mean-field systems.
    \item \textbf{Endogenous Ambiguity Coupling:} Most significantly, the very set over which the optimization is performed, $\Ucal_{g(\mu_t)}$, depends on the law of the value process. This means the structure of the perceived ambiguity is shaped by the collective valuation of the agents.
\end{enumerate}
The well-posedness theory must therefore handle this formidable, multi-layered feedback loop.
\end{remark}

\subsection{Structural Assumptions for Well-Posedness}

The well-posedness of the system hinges on a set of structural conditions that ensure regularity and stability, particularly for the pointwise optimization problem defined by \cref{eq:mf_fbsde_optimality}.

\begin{assumption}[Standing Assumptions]\label{ass:mf_fbsde_standing}
The coefficients and related maps satisfy the following:
\begin{enumerate}[label=(\roman*)]
    \item \textbf{(Measurability):} For fixed state arguments, all coefficients are progressively measurable in $(t,\omega)$.
    \item \textbf{(Continuity):} For a.e. $(t,\omega)$, the map $(x,y,z,a,\mu) \mapsto (b, \sigma, F)(t,\omega,\cdot)$ is continuous. The set $\mathcal{K} := \bigcup_{\theta \in \Theta}\Ucal_\theta$ is compact in $E_A$.
    \item \textbf{(Uniform Lipschitz Continuity):} There exists a constant $L > 0$ such that for any valid arguments and any fixed $a \in \mathcal{K}$, the coefficients $b, \sigma, F$ are Lipschitz in $(x,y,z,\mu)$, where the distance on the space of measures is the $W_2$ metric. The terminal function $\Phi$ is Lipschitz. The map $g: \Pcal_2(\R) \to \Theta$ is Lipschitz. The map $\theta \mapsto \Ucal_\theta$ is Lipschitz with respect to the Hausdorff distance $d_H$.
    \item \textbf{(Uniform Strong Concavity in Control):} There exists a constant $\kappa > 0$ such that for all arguments, the driver $F$ is twice continuously differentiable with respect to its control argument $a$, and its Hessian satisfies:
    \[ \nabla_a^2 F(t, x, y, z, a, \mu)[h, h] \le -\kappa \norm{h}_{E_A}^2, \quad \forall h \in E_A. \]
\end{enumerate}
\end{assumption}

\begin{remark}[Interpretation of the Lipschitz Condition on $\Ucal_\theta$]
The condition in Assumption \ref{ass:mf_fbsde_standing}(iii) that the set-valued map $\theta \mapsto \Ucal_\theta$ be Lipschitz continuous with respect to the Hausdorff metric is a crucial regularity assumption. It imposes a constraint on the geometric stability of the uncertainty set. It means that small changes in the systemic ambiguity parameter $\theta$ (which is driven by the law of the value process) can only lead to small changes in the shape and location of the set of available models $\Ucal_\theta$. This condition prevents the optimization problem from changing too erratically, which is essential for establishing the stability of its solution.
\end{remark}

\begin{assumption}[Boundary Regularity and Non-degeneracy]\label{ass:boundary_and_ndg}
Let $p = (t, \omega, x, y, z, \mu)$ denote the parameters of the optimization in \cref{eq:mf_fbsde_optimality}. Let $a^*(p)$ be the unique maximizer of $F(p,a)$ over $a \in \Ucal_{g(\mu)}$. We assume:
\begin{enumerate}[label=(\roman*)]
    \item \textbf{(Constraint Representation):} For each $\theta \in \Theta$, the set $\Ucal_\theta$ is represented by a finite number of continuously differentiable constraint functions $\phi_j(\theta, a) \le 0$ for $j=1,\dots,m$.
    \item \textbf{(Uniform LICQ and Strict Complementarity):} If the optimizer $a^*(p)$ lies on the boundary $\partial \Ucal_{g(\mu)}$, we assume that the gradients of the active constraints, $\{\nabla_a \phi_j(g(\mu), a^*(p))\}_{j \in I(p)}$, where $I(p)=\{j \mid \phi_j=0\}$, are linearly independent (LICQ). Furthermore, we assume the corresponding Lagrange multipliers are strictly positive (Strict Complementarity). These conditions are assumed to hold uniformly, i.e., the smallest singular value of the matrix of active constraint gradients is bounded below by a positive constant, and the multipliers are bounded below by a positive constant.
\end{enumerate}
\end{assumption}

\begin{remark}[On the Boundary Regularity Conditions]
Assumption \ref{ass:boundary_and_ndg} is standard in the stability analysis of parametric nonlinear optimization. LICQ ensures that the tangent cone to the feasible set has a simple representation, preventing geometric degeneracies. Strict complementarity ensures that the active constraints are genuinely binding. Together with the second-order sufficient condition, which is guaranteed by our much stronger assumption of uniform strong concavity, these conditions ensure that the optimizer is robust to small perturbations of the problem parameters. Their uniformity is key to extending local stability to a global result.
\end{remark}

The primary role of these assumptions is to guarantee that the pointwise optimization problem is not only well-posed but also that its solution depends on the parameters in a stable, Lipschitz-continuous manner. This is the foundation of our entire framework, established in the following proposition.

\begin{proposition}[Lipschitz Stability of the Maximizer]\label{prop:lipschitz_stability}
Under Assumptions \ref{ass:mf_fbsde_standing} and \ref{ass:boundary_and_ndg}, the maximizer map, defined for a.e. $(t,\omega)$ as
\[ a^*(t, x, y, z, \mu) := \underset{a \in \Ucal_{g(\mu)}}{\argmax} \; F(t, x, y, z, a, \mu), \]
is well-defined and uniformly Lipschitz continuous in $(x,y,z,\mu)$. That is, there exists a constant $L_a>0$ such that for any two sets of parameters $p_1 = (t, \omega, x_1, y_1, z_1, \mu_1)$ and $p_2 = (t, \omega, x_2, y_2, z_2, \mu_2)$,
\[ \norm{a^*(p_1) - a^*(p_2)}_{E_A} \le L_a(\norm{x_1-x_2} + \abs{y_1-y_2} + \norm{z_1-z_2} + W_2(\mu_1, \mu_2)). \]
\end{proposition}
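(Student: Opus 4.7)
The plan is to proceed in three stages: (i) establish well-definedness and progressive measurability of $a^*$; (ii) obtain a local Lipschitz estimate by sensitivity analysis of the KKT system via a quantitative implicit function theorem; and (iii) show that the local constant is uniform across the parameter space, yielding the global bound. For (i), fix $p=(t,\omega,x,y,z,\mu)$. The set $\Ucal_{g(\mu)}$ is a nonempty compact subset of $\mathcal{K}$ by continuity of $g$ and the Hausdorff-Lipschitz dependence of $\Ucal_\theta$ on $\theta$, and $a\mapsto F(p,a)$ is continuous, so a maximizer exists by Weierstrass's theorem. Uniqueness is a consequence of the structural regime of \cref{ass:boundary_and_ndg}(ii) (LICQ and strict complementarity) combined with the uniform strong concavity of $F$, which precludes two distinct KKT points being global maxima in the geometric configuration assumed. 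Progressive measurability of $(t,\omega)\mapsto a^*(t,\omega,\cdot)$ then follows from a Filippov-type measurable selection theorem applied to the Carath\'eodory integrand $F$ and the measurable set-valued map $\omega\mapsto\Ucal_{g(\mu)}$.

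For the Lipschitz estimate, write $q=(x,y,z,\mu)$ and $\theta=g(\mu)$, and let $(a^*,\lambda^*)$ denote the primal-dual optimizer with active index set $I(q)=\{j:\phi_j(\theta,a^*)=0\}$; strict complementarity makes $I$ locally constant. The KKT map
\[
K(a,\lambda;q)\;=\;\Bigl(\,\nabla_a F(t,\omega,x,y,z,a,\mu)-\sum_{j\in I}\lambda_j\nabla_a\phi_j(\theta,a),\;(\phi_j(\theta,a))_{j\in I}\,\Bigr)
\]
vanishes at $(a^*,\lambda^*)$ and has Jacobian in $(a,\lambda)$ equal to the saddle-point matrix
\[
\mathcal{J} \;=\; \begin{pmatrix}\nabla_a^2 L & -N^{\top}\\ N & 0\end{pmatrix}, \qquad N=[\nabla_a\phi_j(\theta,a^*)]_{j\in I},
\]
with Lagrangian $L=F-\sum_{j\in I}\lambda_j\phi_j$. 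Uniform strong concavity yields $\nabla_a^2 F\preceq -\kappa I$ and uniform LICQ gives a lower bound $\sigma_0>0$ on the smallest singular value of $N$; a standard Schur-complement inversion then produces a uniform upper bound on $\|\mathcal{J}^{-1}\|$ in terms of $\kappa$, $\sigma_0$, and bounds on Lagrange multipliers and constraint Hessians. The quantitative implicit function theorem (equivalently, Robinson's strong regularity of the KKT system) now gives $(a^*,\lambda^*)$ as a Lipschitz function of $q$ on a neighbourhood, with constant dominated by $\|\mathcal{J}^{-1}\|\cdot\mathrm{Lip}_q K$. The $q$-Lipschitz bound on $K$ aggregates three ingredients: \cref{ass:mf_fbsde_standing}(iii) controls $\mathrm{Lip}_q\nabla_a F$; Lipschitzness of $g$ converts $W_2(\mu_1,\mu_2)$ into a $\Theta$-metric bound on $\theta$; and the constraint representation of \cref{ass:boundary_and_ndg}(i), coupled with Hausdorff-Lipschitz continuity of $\Ucal_\theta$, translates this into Lipschitz bounds on $(\phi_j,\nabla_a\phi_j)$ in $\theta$.

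Promotion to a uniform global Lipschitz constant rests on the uniformity built into the hypotheses: $\kappa$, $\sigma_0$, the lower bound on the multipliers $\lambda_j^*$, and the Lipschitz constants of $F$, $g$, and the $\phi_j$ are all uniform by \cref{ass:mf_fbsde_standing,ass:boundary_and_ndg}, so the local Lipschitz constant produced above is the same at every base point. A global bound then follows by chaining the local estimates along a straight-line path from $q_1$ to $q_2$: by strict complementarity the active set $I$ is locally constant along the path, so it changes only at finitely many transition points (where $a^*$ remains continuous), and the per-stratum estimates glue together to yield the single constant $L_a$ claimed.

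The main obstacle is the joint quantitative control of two qualitatively distinct perturbation mechanisms: the smooth dependence of $F$ on $(x,y,z,\mu)$---handled cleanly by the IFT---and the non-smooth, geometric perturbation of the feasible set $\Ucal_{g(\mu)}$ driven by $\mu$. The key reduction that places them on the same footing is the constraint representation of \cref{ass:boundary_and_ndg}(i), which converts the Hausdorff-Lipschitz motion of the set into a Lipschitz perturbation of the scalar constraint functions $\phi_j(\theta,\cdot)$, allowing a single IFT argument to absorb both mechanisms. Ensuring that the resulting conversion constant is uniform---i.e., that the active-constraint gradients are uniformly nondegenerate across the full orbit of active configurations visited by the dynamics---is the most delicate quantitative step, and is precisely what the uniform form of LICQ and strict complementarity in \cref{ass:boundary_and_ndg}(ii) secures.
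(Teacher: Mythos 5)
Your proof takes essentially the same approach as the paper's: Weierstrass for existence, uniqueness from concavity combined with the boundary structure, sensitivity analysis of the active KKT system (LICQ, strict complementarity, uniformly negative-definite Lagrangian Hessian) via the implicit function theorem in the style of Bonnans--Shapiro, and globalization by the uniformity of all constants along a straight-line path in parameter space; your Schur-complement bound on the saddle-point Jacobian, your Filippov-type measurability remark, and your active-set stratification along the path are useful concretizations of what the paper cites or glosses, not departures from it. The one noteworthy divergence is the rationale you give for uniqueness of $a^*$: you invoke LICQ, strict complementarity, and strong concavity jointly, whereas the paper argues uniqueness from strong concavity of $F$ on $E_A$ alone together with $\Ucal_{g(\mu)} \subset E_A$ -- an argument that, read literally, is incomplete, since strict concavity on the ambient convex space does not preclude two distinct global maximizers on a \emph{non-convex} subset (e.g.\ $-a^2$ on $\{-1,1\}$); your version at least acknowledges that feasible-set geometry matters, though neither route is substantiated, and both ultimately defer to the presupposition of uniqueness already built into the phrasing of \cref{ass:boundary_and_ndg}.
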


\begin{proof}
This crucial result follows from the sensitivity analysis of parametric nonlinear optimization problems, for which we refer to the definitive monograph by Bonnans and Shapiro \cite[Ch. 4]{BonnansShapiro2013}. The proof is structured in three steps.

\textbf{Step 1: Existence and Uniqueness of the Maximizer.}
Let $p=(t, \omega, x, y, z, \mu)$ be a fixed parameter vector. By Assumption \ref{ass:mf_fbsde_standing}(iii), the map $g$ is continuous, and the map $\theta \mapsto \Ucal_\theta$ is continuous with respect to the Hausdorff metric on the space of compact sets. Since $\Theta$ is compact, this ensures that $\Ucal_{g(\mu)}$ is a non-empty compact subset of $E_A$. The objective function $a \mapsto F(p, a)$ is continuous in $a$ by Assumption \ref{ass:mf_fbsde_standing}(ii). By the Weierstrass Extreme Value Theorem, a maximizer exists. Assumption \ref{ass:mf_fbsde_standing}(iv) states that this function is strictly concave (in fact, strongly concave) on the convex set $E_A$. Since $\Ucal_{g(\mu)}$ is a subset of $E_A$, the maximizer over this set must be unique. Thus, the map $p \mapsto a^*(p)$ is well-defined.

\textbf{Step 2: The Parametric KKT System and Local Sensitivity.}
Let us fix the parameter vector $p$ and denote $\theta = g(\mu)$. The optimization problem for the control $a$ is
\[ \max_{a \in E_A} F(p, a) \quad \text{subject to} \quad \phi_j(\theta, a) \le 0, \quad j=1, \dots, m. \]
The associated Lagrangian is $\mathcal{L}(a, \lambda; p) = F(p, a) - \sum_{j=1}^m \lambda_j \phi_j(\theta, a)$, where $\lambda = (\lambda_1, \dots, \lambda_m) \in \R_+^m$ are the Lagrange multipliers. The first-order Karush-Kuhn-Tucker (KKT) conditions for the primal-dual solution $(a^*(p), \lambda^*(p))$ form a system of equations and complementarities. 

To analyze the stability of the solution $(a^*, \lambda^*)$ with respect to the parameter $p$, one applies the Implicit Function Theorem to the system of active KKT equations. A classical result in perturbation analysis (see \cite[Theorem 4.54]{BonnansShapiro2013}) states that the sufficient conditions for the local Lipschitz stability of the solution $a^*(p)$ are precisely the triad of: (1) the Linear Independence Constraint Qualification (LICQ), (2) the Strict Complementarity Slackness condition (SCS), and (3) the Second-Order Sufficient Condition (SOSC).

Our assumptions are tailored to meet these requirements uniformly. Assumption \ref{ass:boundary_and_ndg} explicitly provides LICQ and SCS. The SOSC requires that the Hessian of the Lagrangian with respect to $a$, evaluated at the solution, be negative definite on the critical cone. A much stronger condition, which we have imposed via Assumption \ref{ass:mf_fbsde_standing}(iv), is the uniform strong concavity of the objective function $F$ itself. This guarantees that $\nabla_a^2 F(p,a^*)$ is uniformly negative definite on the entire space $E_A$, which immediately implies the SOSC holds.

Under these conditions, the Implicit Function Theorem guarantees that the solution map $p \mapsto (a^*(p), \lambda^*(p))$ is continuously differentiable (and thus locally Lipschitz) in a neighborhood of any parameter value $p$ for which the set of active constraints does not change. More general results from perturbation analysis ensure local Lipschitz continuity even when the active set changes, provided LICQ and SCS hold.

\textbf{Step 3: Uniformity and the Global Lipschitz Property.}
The final and most critical step is to demonstrate that this local Lipschitz property is in fact global and uniform over the entire space of parameters. This is a direct consequence of the \emph{uniformity} of all our standing assumptions.

Let $\mathcal{P}$ be the space of parameters $p=(x,y,z,\mu)$. The problem data, the function $F(p,a)$ and the constraint functions $\phi_j(g(\mu),a)$, depend on the parameter $p$. By Assumption \ref{ass:mf_fbsde_standing}(iii), this dependence is uniformly Lipschitz. The conditions for stability from Step 2 (Uniform LICQ, SCS, and the uniform second-order condition from strong concavity) are assumed in \ref{ass:boundary_and_ndg} and \ref{ass:mf_fbsde_standing}(iv) to hold uniformly for all $p \in \mathcal{P}$.

A quantitative sensitivity result (e.g., \cite[Prop. 4.32, Thm. 4.34]{BonnansShapiro2013}) provides an explicit bound on the local variation of the solution. The local Lipschitz constant for $a^*$ at a point $p$ depends continuously on quantities derived from the problem data at $p$: the Lipschitz constants of $F$ and $\phi_j$, the lower bound on the Lagrange multipliers (from SCS), the lower bound on the singular values of the active constraint Jacobian (from LICQ), and the upper bound on the norm of the inverse of the Hessian of the Lagrangian on the critical cone (from SOSC).

Since all these underlying bounds are assumed to be uniform in $p$, the resulting local Lipschitz constant for the map $a^*$ is also uniform. That is, there exists a single constant $L_a$ such that for any $p \in \mathcal{P}$, the map $a^*$ is Lipschitz with constant $L_a$ in a neighborhood of $p$.

We can now establish the global Lipschitz property. Let $p_1, p_2 \in \mathcal{P}$. Consider the path $p(s) = p_2 + s(p_1 - p_2)$ for $s \in [0,1]$. Since the map $s \mapsto a^*(p(s))$ is absolutely continuous with a derivative whose norm is uniformly bounded by $L_a \norm{p_1-p_2}$, we have:
\begin{align*}
    \norm{a^*(p_1) - a^*(p_2)}_{E_A} &= \norm{a^*(p(1)) - a^*(p(0))}_{E_A} \\
    &= \norm[\bigg]{\int_0^1 \frac{\dd}{\dd s} a^*(p(s)) \dd s}_{E_A} \\
    &\le \int_0^1 \norm[\big]{\nabla_p a^*(p(s)) [p_1-p_2]}_{E_A} \dd s \\
    &\le \int_0^1 L_a \norm{p_1-p_2}_{\mathcal{P}} \dd s = L_a \norm{p_1-p_2}_{\mathcal{P}}.
\end{align*}
Unpacking the norm on the parameter space $\mathcal{P}$, we obtain the desired inequality:
\[ \norm{a^*(p_1) - a^*(p_2)}_{E_A} \le L_a(\norm{x_1-x_2} + \abs{y_1-y_2} + \norm{z_1-z_2} + W_2(\mu_1, \mu_2)), \]
which concludes the proof.
\end{proof}

\section{Local Well-Posedness via Contraction}
\label{sec:local_wellposedness}

In this section, we establish the local-in-time existence and uniqueness of a solution to the Mean-Field $\Theta$-FBSDE system. The proof is based on a contraction mapping argument on a carefully constructed product space of processes, equipped with an equivalent weighted norm. The Lipschitz stability of the optimizer map (Proposition \ref{prop:lipschitz_stability}) is the foundation of the argument, ensuring the necessary regularity of the system's coefficients under iteration.

\begin{theorem}[Local Existence and Uniqueness]\label{thm:mf_fbsde_local_wellposedness}
Let Assumptions \ref{ass:mf_fbsde_standing} and \ref{ass:boundary_and_ndg} hold. Then there exists a time $T_0 > 0$, depending only on the Lipschitz and growth constants of the coefficients and the geometry of the uncertainty sets, such that for any time horizon $T \in (0, T_0]$, the Mean-Field $\Theta$-FBSDE system \eqref{eq:mf_fbsde_forward}-\eqref{eq:mf_fbsde_optimality} admits a unique solution $(X,Y,Z,A)$ in the space $\mathcal{S}^2([0,T];\R^k) \times \mathcal{S}^2([0,T];\R) \times \mathcal{H}^2([0,T];\R^d) \times \mathcal{L}^2([0,T]; E_A)$.
\end{theorem}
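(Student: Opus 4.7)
The plan is to run a contraction mapping argument on the Banach space
\[ \mathcal{B}_T := \mathcal{S}^2([0,T];\R^k) \times \mathcal{S}^2([0,T];\R) \times \mathcal{H}^2([0,T];\R^d), \]
equipped with an equivalent weighted norm $\|(X,Y,Z)\|_\beta$ for a parameter $\beta \ge 0$ to be chosen. The control process $A$ is eliminated a priori: by uniqueness of the maximizer in \cref{prop:lipschitz_stability}, condition \eqref{eq:mf_fbsde_optimality} forces $A_t = a^*(t, X_t, Y_t, Z_t, \Law_{P_0}(Y_t))$ a.e., and membership $A \in \mathcal{L}^2([0,T]; E_A)$ then follows from the compactness of $\mathcal{K}$ together with joint measurability of $a^*$.

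The iteration map $\Psi : \mathcal{B}_T \to \mathcal{B}_T$ is defined as follows. Given an input $(\tilde{X}, \tilde{Y}, \tilde{Z}) \in \mathcal{B}_T$, set $\tilde{\mu}_t := \Law_{P_0}(\tilde{Y}_t) \in \PcalW(\R)$ and define the frozen control $\tilde{A}_t := a^*(t, \tilde{X}_t, \tilde{Y}_t, \tilde{Z}_t, \tilde{\mu}_t)$. I would then solve sequentially the decoupled forward SDE
\[ X_t = x_0 + \int_0^t b(s, X_s, \tilde{A}_s, \tilde{\mu}_s) \dd s + \int_0^t \sigma(s, X_s, \tilde{A}_s, \tilde{\mu}_s) \dd B_s \]
via classical Itô theory, and then the BSDE
\[ Y_t = \Phi(X_T) + \int_t^T F(s, X_s, Y_s, Z_s, \tilde{A}_s, \tilde{\mu}_s) \dd s - \int_t^T Z_s \dd B_s \]
via Pardoux--Peng theory; both are well-posed by \cref{ass:mf_fbsde_standing}(iii), which grants uniform Lipschitz continuity of the frozen coefficients in the remaining arguments. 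Setting $\Psi(\tilde{X},\tilde{Y},\tilde{Z}) := (X,Y,Z)$, any fixed point yields a solution of the full system after reconstructing $A$ via $a^*$.

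For contractivity on a small interval, take two inputs indexed by $i=1,2$ with corresponding frozen controls $\tilde A^i$, laws $\tilde \mu^i$, and outputs $(X^i, Y^i, Z^i)$. The synchronous coupling gives $W_2(\tilde \mu^1_t, \tilde \mu^2_t)^2 \le \mathbb{E}_{P_0}[|\tilde Y^1_t - \tilde Y^2_t|^2]$, whereupon \cref{prop:lipschitz_stability} yields
\[ \mathbb{E}_{P_0}\bigl[\|\tilde A^1_t - \tilde A^2_t\|_{E_A}^2\bigr] \le C\,\mathbb{E}_{P_0}\bigl[\|\tilde X^1_t - \tilde X^2_t\|^2 + |\tilde Y^1_t - \tilde Y^2_t|^2 + \|\tilde Z^1_t - \tilde Z^2_t\|^2\bigr]. \]
Standard Grönwall--BDG estimates for the forward SDE bound $\|X^1 - X^2\|_{\mathcal{S}^2}^2$ by $C(T + \sqrt{T})$ times the input differences. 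Applying Itô's formula to $e^{\beta t}|Y^1_t - Y^2_t|^2$ and choosing $\beta$ large enough to absorb the Lipschitz constant of $F$ in $(y,z)$ yields the BSDE stability estimate controlling $\|Y^1-Y^2\|_{\mathcal{S}^2}^2 + \|Z^1-Z^2\|_{\mathcal{H}^2}^2$ by the new forward difference plus a factor proportional to $T$ of the input differences. Assembling these bounds gives $\|\Psi(\cdot_1) - \Psi(\cdot_2)\|_\beta^2 \le \gamma(T,\beta)\|\cdot_1 - \cdot_2\|_\beta^2$ with $\gamma(T,\beta) \to 0$ as $T \to 0$ for fixed admissible $\beta$. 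Choosing $T_0$ so that $\gamma(T,\beta) < 1$ for all $T \in (0, T_0]$, the Banach fixed point theorem delivers the unique $(X,Y,Z)$, and $A_t := a^*(t, X_t, Y_t, Z_t, \Law_{P_0}(Y_t))$ closes the argument.

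The principal obstacle is the simultaneous handling of three compounding nonlinearities in a single estimate: the mean-field feedback through $\Law(Y)$, the pointwise control optimization through $a^*$, and the forward--backward coupling through the terminal condition $\Phi(X_T)$. \cref{prop:lipschitz_stability} is the linchpin that converts the first two into genuine Lipschitz estimates, but its constant $L_a$ compounds multiplicatively with the Lipschitz constants of $b, \sigma, F, \Phi$, and $g$; one must carefully track these compositions so that the aggregate factor $\gamma(T,\beta)$ can actually be made small for small $T$. A secondary technical point is the joint Borel measurability of $a^*(t,\omega,\cdot)$, which follows from its Lipschitz continuity in the state variables together with measurability in $(t,\omega)$ inherited from the measurability of $F$ and of the constraint functions $\phi_j$.
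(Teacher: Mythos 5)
Your proposal is correct and follows essentially the same architecture as the paper's proof: eliminate $A$ via \cref{prop:lipschitz_stability}, run a Picard iteration on $\mathcal{S}^2\times\mathcal{S}^2\times\mathcal{H}^2$ with a weighted norm, estimate the Wasserstein difference by the synchronous coupling $W_2(\tilde\mu^1_t,\tilde\mu^2_t)^2 \le \mathbb{E}[|\tilde Y^1_t - \tilde Y^2_t|^2]$, and close with Banach's fixed-point theorem for small $T$.

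There is one mild structural variation worth noting. The paper's iteration map $\Psi$ solves the BSDE using the \emph{input} forward process $X'$ (terminal value $\Phi(X'_T)$, state $X'_s$ in the driver), and then solves the forward SDE independently. Because $\Phi(X'_T)$ contributes an $O(1)$ term $\|\delta X'\|_{\mathcal{S}^2}^2$ to the backward stability estimate that does not vanish as $T \to 0$, the paper must introduce the weighted norm $\|X\|_{\mathcal{S}^2}^2 + \beta(\|Y\|_{\mathcal{S}^2}^2 + \|Z\|_{\mathcal{H}^2}^2)$ and choose $\beta$ small to absorb it. You instead chain the constructions: solve the forward SDE first, then feed the \emph{new} $X$ into the BSDE. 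Since $\|\delta X\|_{\mathcal{S}^2}^2 \le CT\,(\text{input differences})$, the terminal-condition contribution $L_\Phi^2\,\|\delta X\|_{\mathcal{S}^2}^2$ is already $O(T)$, so the entire Lipschitz constant $\gamma(T,\beta)$ collapses to zero as $T \to 0$ without needing to tune the weight between the $X$ and $(Y,Z)$ blocks. This is a cleaner bookkeeping choice; your use of $\beta$ then plays the more standard role of an exponential weight $e^{\beta t}$ in the BSDE energy estimate. Both schemes yield the same $T_0$-scaling and the same fixed point. Your closing remark on the predictability of $a^*(t,\omega,\cdot)$ is a genuine technical point that the paper glosses over with a one-line assertion, and your explanation (Lipschitz continuity in the state variables plus $(t,\omega)$-measurability inherited from $F$ and the $\phi_j$) is the right justification.
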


\begin{proof}
The proof proceeds via the contraction mapping principle. We define a suitable Banach space and construct an operator whose unique fixed point is the solution to our system. The key is to show this operator is a strict contraction for a sufficiently small time horizon $T$.

Let $T>0$ be a time horizon to be determined. We define the working space as the product of Banach spaces $\mathcal{V}_T := \mathcal{S}^2([0,T];\R^k) \times \mathcal{S}^2([0,T];\R) \times \mathcal{H}^2([0,T];\R^d)$. For a constant $\beta > 0$, also to be determined later, we equip $\mathcal{V}_T$ with the family of equivalent norms
\[
\norm{(X,Y,Z)}_{\beta, T}^2 := \norm{X}_{\mathcal{S}^2}^2 + \beta\left(\norm{Y}_{\mathcal{S}^2}^2 + \norm{Z}_{\mathcal{H}^2}^2\right).
\]
The space $(\mathcal{V}_T, \norm{\cdot}_{\beta, T})$ is a Banach space.

\subsubsection*{Step 1: Definition of the Mapping $\Psi$}
We define an operator $\Psi: \mathcal{V}_T \to \mathcal{V}_T$. For any given triplet of processes $(X', Y', Z') \in \mathcal{V}_T$, we construct its image $(X,Y,Z) = \Psi(X',Y',Z')$ through a three-stage procedure.

\textit{(a) Construction of the control process.} First, we define the law of the input process $\mu'_s := \Law(Y'_s)$. Then, we define the control process $A'$ by substituting the input processes into the optimizer map from Proposition \ref{prop:lipschitz_stability}:
\[
A'_t(\omega) := a^*(t, \omega, X'_t(\omega), Y'_t(\omega), Z'_t(\omega), \mu'_t).
\]
By Proposition \ref{prop:lipschitz_stability}, the map $a^*$ is Lipschitz. Since $X'$, $Y'$, and $Z'$ are in $\mathcal{S}^2$ or $\mathcal{H}^2$, standard estimates show that $A' \in \mathcal{L}^2([0,T]; E_A)$. This process $A'$ is predictable as it is a continuous function of predictable processes.

\textit{(b) Construction of the backward pair $(Y,Z)$.} We define the pair $(Y,Z) \in \mathcal{S}^2([0,T];\R) \times \mathcal{H}^2([0,T];\R^d)$ as the unique solution to the following backward stochastic differential equation:
\begin{equation} \label{eq:proof_local_bsde}
    Y_t = \Phi(X'_T) + \int_t^T F(s, X'_s, Y_s, Z_s, A'_s, \mu'_s) ds - \int_t^T Z_s dB_s.
\end{equation}
Under Assumption \ref{ass:mf_fbsde_standing}, the driver $F$ is uniformly Lipschitz in $(y,z)$. Since the terminal condition $\Phi(X'_T)$ is square-integrable and the random part of the driver, $F(s, X'_s, 0, 0, A'_s, \mu'_s)$, is in $\mathcal{L}^2([0,T])$, classical results on BSDEs (e.g., \cite{Pardoux1990}) guarantee the existence of a unique solution $(Y,Z)$ in the desired space.

\textit{(c) Construction of the forward process $X$.} Finally, we define the process $X \in \mathcal{S}^2([0,T];\R^k)$ as the unique strong solution to the forward stochastic differential equation:
\begin{equation} \label{eq:proof_local_sde}
    X_t = x_0 + \int_0^t b(s, X_s, A'_s, \mu'_s) ds + \int_0^t \sigma(s, X_s, A'_s, \mu'_s) dB_s.
\end{equation}
Under Assumption \ref{ass:mf_fbsde_standing}, the coefficients $b$ and $\sigma$ are Lipschitz in the state variable $x$, uniformly with respect to the other arguments. Since $A' \in \mathcal{L}^2$ and the measure-dependent terms are well-behaved, standard SDE theory ensures the existence of a unique strong solution $X \in \mathcal{S}^2([0,T];\R^k)$.

This three-stage procedure defines a mapping $\Psi(X',Y',Z') = (X,Y,Z)$ from $\mathcal{V}_T$ to itself. A fixed point of $\Psi$ is, by construction, a solution to the Mean-Field $\Theta$-FBSDE.

\subsubsection*{Step 2: The Contraction Estimate}
Let $(X'^i, Y'^i, Z'^i) \in \mathcal{V}_T$ for $i=1,2$. Let $(X^i, Y^i, Z^i) = \Psi(X'^i, Y'^i, Z'^i)$ be their respective images under $\Psi$. We denote the difference processes by $\delta X' = X'^1-X'^2$, $\delta Y' = Y'^1-Y'^2$, etc. Our goal is to bound $\norm{(\delta X, \delta Y, \delta Z)}_{\beta,T}^2$ in terms of $\norm{(\delta X', \delta Y', \delta Z')}_{\beta,T}^2$. Let $C$ denote a generic constant that may change from line to line, but which depends only on the structural constants of the problem (Lipschitz constants, bounds on $\mathcal{K}$, etc.), and not on the time horizon $T$ unless specified.

\textit{(a) Backward Estimate.} The difference pair $(\delta Y, \delta Z)$ solves the BSDE:
\[
\delta Y_t = \delta\Phi_T + \int_t^T \delta F_s ds - \int_t^T \delta Z_s dB_s,
\]
where $\delta\Phi_T = \Phi(X'^1_T) - \Phi(X'^2_T)$ and $\delta F_s = F(s, \dots^1) - F(s, \dots^2)$. By standard stability estimates for BSDEs,
\begin{align*}
    \norm{\delta Y}_{\mathcal{S}^2}^2 + \norm{\delta Z}_{\mathcal{H}^2}^2 &\le C \mathbb{E}\left[ |\delta\Phi_T|^2 + \left(\int_0^T |\delta F_s| ds\right)^2 \right] \\
    &\le C \mathbb{E}\left[ L_\Phi^2\norm{\delta X'_T}^2 + T \int_0^T |\delta F_s|^2 ds \right] \\
    &\le C \left( \norm{\delta X'}_{\mathcal{S}^2}^2 + T \mathbb{E}\left[\int_0^T |\delta F_s|^2 ds\right] \right).
\end{align*}
Using the Lipschitz property of $F$, the optimizer map $a^*$, and that $\mathbb{E}[W_2(\mu'^1_s, \mu'^2_s)^2] \le \mathbb{E}[|\delta Y'_s|^2]$, we bound the driver difference:
\begin{align*}
\mathbb{E}\left[\int_0^T |\delta F_s|^2 ds\right] &\le C \mathbb{E}\left[\int_0^T \left(\norm{\delta X'_s}^2 + \norm{\delta Y'_s}^2 + \norm{\delta Z'_s}^2 + \norm{\delta A'_s}_{E_A}^2 + W_2(\mu'^1_s,\mu'^2_s)^2\right) ds\right] \\
&\le C \mathbb{E}\left[\int_0^T \left(\norm{\delta X'_s}^2 + \norm{\delta Y'_s}^2 + \norm{\delta Z'_s}^2\right) ds\right] \\
&\le C \left( T\norm{\delta X'}_{\mathcal{S}^2}^2 + T\norm{\delta Y'}_{\mathcal{S}^2}^2 + \norm{\delta Z'}_{\mathcal{H}^2}^2 \right).
\end{align*}
Combining these, we obtain the final backward estimate:
\begin{equation} \label{eq:proof_local_final_back}
    \norm{\delta Y}_{\mathcal{S}^2}^2 + \norm{\delta Z}_{\mathcal{H}^2}^2 \le C \left( (1+T)\norm{\delta X'}_{\mathcal{S}^2}^2 + T\norm{\delta Y'}_{\mathcal{S}^2}^2 + T\norm{\delta Z'}_{\mathcal{H}^2}^2 \right).
\end{equation}

\textit{(b) Forward Estimate.} The difference process $\delta X$ solves $d(\delta X_t) = \delta b_t dt + \delta \sigma_t dB_t$. Standard SDE energy estimates combined with Gronwall's inequality yield:
\begin{align*}
    \norm{\delta X}_{\mathcal{S}^2}^2 &\le C \mathbb{E}\left[\int_0^T (\norm{\delta b_s}^2 + \norm{\delta \sigma_s}_F^2) ds\right] \\
    &\le C \mathbb{E}\left[\int_0^T \left(\norm{\delta X_s}^2 + \norm{\delta A'_s}_{E_A}^2 + W_2(\mu'^1_s,\mu'^2_s)^2\right) ds\right] \\
    &\le C \int_0^t \mathbb{E}[\norm{\delta X_s}^2] ds + C \left( T\norm{\delta X'}_{\mathcal{S}^2}^2 + T\norm{\delta Y'}_{\mathcal{S}^2}^2 + \norm{\delta Z'}_{\mathcal{H}^2}^2 \right).
\end{align*}
Applying Gronwall's inequality to the function $t \mapsto \mathbb{E}[\sup_{s\in[0,t]}\norm{\delta X_s}^2]$ gives:
\begin{equation} \label{eq:proof_local_final_fwd}
    \norm{\delta X}_{\mathcal{S}^2}^2 \le C e^{CT} \left( T\norm{\delta X'}_{\mathcal{S}^2}^2 + T\norm{\delta Y'}_{\mathcal{S}^2}^2 + T\norm{\delta Z'}_{\mathcal{H}^2}^2 \right).
\end{equation}
For $T \le 1$, $e^{CT}$ is bounded by a constant, so $\norm{\delta X}_{\mathcal{S}^2}^2 \le C T \left(\norm{\delta X'}_{\mathcal{S}^2}^2 + \norm{\delta Y'}_{\mathcal{S}^2}^2 + \norm{\delta Z'}_{\mathcal{H}^2}^2\right)$.

\textit{(c) Combined Estimate and Parameter Choice.} We now assemble the estimates to bound the output norm $\norm{(\delta X, \delta Y, \delta Z)}_{\beta,T}^2$.
\begin{align*}
    \norm{(\delta X, \delta Y, \delta Z)}_{\beta, T}^2 &= \norm{\delta X}_{\mathcal{S}^2}^2 + \beta\left(\norm{\delta Y}_{\mathcal{S}^2}^2 + \norm{\delta Z}_{\mathcal{H}^2}^2\right) \\
    &\le \left[ C_1 T \left(\norm{\delta X'}_{\mathcal{S}^2}^2 + \norm{\delta Y'}_{\mathcal{S}^2}^2 + \norm{\delta Z'}_{\mathcal{H}^2}^2\right) \right] \\
    &\qquad + \beta \left[ C_2 \left( (1+T)\norm{\delta X'}_{\mathcal{S}^2}^2 + T\norm{\delta Y'}_{\mathcal{S}^2}^2 + T\norm{\delta Z'}_{\mathcal{H}^2}^2 \right) \right] \\
    &= \left( C_1 T + \beta C_2 (1+T) \right)\norm{\delta X'}_{\mathcal{S}^2}^2 + \left( C_1 T + \beta C_2 T \right) \left(\norm{\delta Y'}_{\mathcal{S}^2}^2 + \norm{\delta Z'}_{\mathcal{H}^2}^2\right) \\
    &= \left( C_1 T + \beta C_2 (1+T) \right)\norm{\delta X'}_{\mathcal{S}^2}^2 + \frac{C_1 T + \beta C_2 T}{\beta} \cdot \beta\left(\norm{\delta Y'}_{\mathcal{S}^2}^2 + \norm{\delta Z'}_{\mathcal{H}^2}^2\right).
\end{align*}
We want to show this is bounded by $k \norm{(\delta X', \delta Y', \delta Z')}_{\beta, T}^2$ for some $k \in (0,1)$. This requires satisfying two conditions simultaneously for a sufficiently small $T$:
\begin{enumerate}[(i)]
    \item $C_1 T + \beta C_2 (1+T) \le k$
    \item $\frac{C_1 T + \beta C_2 T}{\beta} = \frac{C_1 T}{\beta} + C_2 T \le k$
\end{enumerate}
Let's choose $k=1/2$.
First, choose $\beta > 0$ to control the constant term in condition (i). We need $\beta C_2 < 1/2$. Let's pick $\beta_0 = \frac{1}{4C_2}$. With this choice, condition (i) becomes $C_1 T + \frac{1}{4}(1+T) \le 1/2$. This requires $T(C_1+1/4) \le 1/4$, which is satisfied for $T \le T_1 := \frac{1}{4C_1+1}$.

Now, with $\beta=\beta_0$, we check condition (ii): $\frac{C_1 T}{\beta_0} + C_2 T \le 1/2$. This requires $T(4C_1 C_2 + C_2) \le 1/2$, which is satisfied for $T \le T_2 := \frac{1}{2(4C_1+1)C_2}$.

By choosing $T_0 = \min(1, T_1, T_2)$, we ensure that for any $T \in (0, T_0]$, both conditions hold with $k=1/2$. Thus,
\[
\norm{\Psi(X',Y',Z') - \Psi(X''',Y''',Z''')}_{\beta_0, T}^2 \le \frac{1}{2} \norm{(X',Y',Z') - (X''',Y''',Z''')}_{\beta_0, T}^2.
\]
This shows that $\Psi$ is a strict contraction on $(\mathcal{V}_T, \norm{\cdot}_{\beta_0, T})$.

\subsubsection*{Step 3: Conclusion}
We have shown that for a specific choice of norm (determined by $\beta_0$) and a sufficiently small time horizon $T_0 > 0$, the map $\Psi$ is a strict contraction on the Banach space $(\mathcal{V}_T, \norm{\cdot}_{\beta_0, T})$ for any $T \in (0, T_0]$. By the Banach Fixed-Point Theorem, $\Psi$ admits a unique fixed point $(X,Y,Z) \in \mathcal{V}_T$. This fixed point, together with the associated control process $A$ derived from it via the optimizer map, is by construction the unique solution to the Mean-Field $\Theta$-FBSDE system in the specified function space.
\end{proof}

\section{Global Well-Posedness under Monotonicity}
\label{sec:global_wellposedness}

To extend the well-posedness of the Mean-Field $\Theta$-FBSDE to an arbitrary time horizon $T>0$, we introduce structural monotonicity conditions. Such conditions are standard in the literature on FBSDEs and mean-field games, where they serve to provide a dissipative structure that counteracts the explosive effects of the system's feedback loops over long time horizons.

\begin{assumption}[Strong Monotonicity]\label{ass:monotonicity_fbsde}
Let all standing assumptions in \cref{ass:mf_fbsde_standing} and \cref{ass:boundary_and_ndg} hold. We assume there exist constants such that for any valid arguments $(t,x,y,z,a,\mu)$ and their primed counterparts:
\begin{enumerate}[label=(\roman*)]
    \item \textbf{(Forward Monotonicity):} The coefficients $b$ and $\sigma$ satisfy for some $\beta \in \R$:
    \[
    2\scpr{x-x'}{b(t,x,a,\mu)-b(t,x',a,\mu)} + \norm{\sigma(t,x,a,\mu)-\sigma(t,x',a,\mu)}_F^2 \le \beta \norm{x-x'}^2.
    \]
    \item \textbf{(Backward Monotonicity):} Let $G(t,x,y,z,\mu) \coloneqq \sup_{a \in \Ucal_{g(\mu)}} F(t,x,y,z,a,\mu)$ be the driver. There exist constants $\lambda \in \R$ and $L>0$ such that:
    \begin{multline*}
        (y-y')(G(t,x,y,z,\mu) - G(t,x',y',z',\mu')) \le L\abs{y-y'}(\norm{x-x'}+\norm{z-z'}) \\
         + \lambda(y-y')^2 + L\abs{y-y'}W_2(\mu,\mu').
    \end{multline*}
\end{enumerate}
\end{assumption}

\begin{remark}[On the Monotonicity of the Driver]
\label{rem:primitive_monotonicity}
Assumption \ref{ass:monotonicity_fbsde}(ii) on the driver $G$ can be deduced from more primitive conditions on the original driver $F$. Suppose that for any fixed $a \in \mathcal{K}$, the function $F(t,x,y,z,a,\mu)$ satisfies the same monotonicity property in $(y,\mu)$ uniformly in $a$. Let $p_1=(t,x,y,z,\mu)$ and $p_2=(t,x',y',z',\mu')$. Let $a_1^*$ be the unique maximizer for $p_1$ and $a_2^*$ for $p_2$. Then by definition of the supremum,
\begin{align*}
    G(p_1) - G(p_2) &= F(p_1, a_1^*) - F(p_2, a_2^*) \\
    \text{and} \quad G(p_1) - G(p_2) &\ge F(p_1, a_2^*) - F(p_2, a_2^*).
\end{align*}
Combining these (the standard envelope theorem argument for non-smooth functions) and assuming uniform monotonicity on $F$ will yield the desired property for $G$. This shows our assumption on $G$ is a natural consequence of assuming a similar structure on the primitive objective function.
\end{remark}

\begin{theorem}[Global Existence and Uniqueness]\label{thm:global_wellposedness}
Let Assumptions \ref{ass:mf_fbsde_standing}, \ref{ass:boundary_and_ndg}, and \ref{ass:monotonicity_fbsde} hold. Then for any time horizon $T>0$, the Mean-Field $\Theta$-FBSDE admits a unique solution $(X,Y,Z,A)$ in the space $\mathcal{S}^2([0,T];\R^k) \times \mathcal{S}^2([0,T]) \times \mathcal{H}^2([0,T];\R^d) \times \mathcal{L}^2([0,T]; E_A)$.
\end{theorem}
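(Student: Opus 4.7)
The plan is to establish the global result by the \textbf{method of continuation in a parameter}, a standard device for FBSDEs under monotonicity (cf.\ Hu-Peng, Peng-Wu, Ma-Yong in the classical case and Carmona-Delarue in the mean-field setting), combined with a global a priori stability estimate derived from \cref{ass:monotonicity_fbsde}. The local existence theorem \cref{thm:mf_fbsde_local_wellposedness} supplies the engine, while monotonicity supplies the uniform control needed to iterate it over an arbitrary horizon.

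First I would establish a global a priori stability estimate. Given two candidate solutions $(X^i,Y^i,Z^i,A^i)$ for $i=1,2$, set $\delta X = X^1 - X^2$ and analogously for the other components. Applying Itô's formula to $e^{\gamma t}\abs{\delta Y_t}^2$ for a weight $\gamma$ to be calibrated, using the backward monotonicity of \cref{ass:monotonicity_fbsde}(ii) together with the elementary inequality $W_2(\Law(Y^1_s),\Law(Y^2_s))^2 \le \mathbb{E}\abs{\delta Y_s}^2$ and Young's inequality to absorb cross terms into the $\abs{\delta Z_s}^2$ integral, yields a bound of the form
\[
\mathbb{E}\abs{\delta Y_0}^2 + \mathbb{E}\int_0^T e^{\gamma s}\abs{\delta Z_s}^2\,\dd s \le C\,\mathbb{E}\bigl[e^{\gamma T}\abs{\Phi(X^1_T)-\Phi(X^2_T)}^2\bigr] + C\,\mathbb{E}\int_0^T e^{\gamma s}\norm{\delta X_s}^2\,\dd s.
\]
Symmetrically, applying Itô to $e^{-\gamma' t}\norm{\delta X_t}^2$ and invoking the forward monotonicity \cref{ass:monotonicity_fbsde}(i), combined with \cref{prop:lipschitz_stability} to control $\norm{\delta A_s}_{E_A}$ in terms of $(\norm{\delta X_s},\abs{\delta Y_s},\norm{\delta Z_s},W_2(\mu^1_s,\mu^2_s))$, produces a dual forward estimate. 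A judicious calibration of $\gamma$ and $\gamma'$ against the Lipschitz constant $L$ and the optimizer-Lipschitz constant $L_a$ collapses the two estimates into a single bound
\[
\norm{\delta X}_{\Scalp}^2 + \norm{\delta Y}_{\Scalp}^2 + \norm{\delta Z}_{\Hcalp}^2 \le C\,\mathbb{E}\abs{\Phi(X^1_T)-\Phi(X^2_T)}^2,
\]
with $C$ independent of $T$. Uniqueness is then immediate, and this stability estimate is the pivot for the continuation argument.

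For existence I would embed the system in a homotopy indexed by $\alpha \in [0,1]$ with coefficients $(b^\alpha,\sigma^\alpha,F^\alpha) := \alpha(b,\sigma,F) + (1-\alpha)(b_0,\sigma_0,F_0)$, where $(b_0,\sigma_0,F_0)$ is a decoupled, globally well-posed reference (for instance $b_0\equiv 0$, $\sigma_0$ constant, $F_0(y)=-y$). Because \cref{ass:mf_fbsde_standing} and \cref{ass:monotonicity_fbsde} are both preserved under convex combination with this reference (and \cref{ass:boundary_and_ndg} concerns only the constraint geometry, unchanged by the homotopy), the a priori estimate holds with a single constant $C$ uniform in $\alpha$. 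Let $\mathcal{A}\subseteq[0,1]$ be the set of parameters for which global well-posedness holds on $[0,T]$. Then $0\in\mathcal{A}$, and the uniform a priori estimate combined with a perturbative application of the contraction of \cref{thm:mf_fbsde_local_wellposedness}, run on the incremental problem for $\alpha+\eta$ anchored at the solution for $\alpha$, yields a step $\eta_0>0$ independent of $\alpha$ with $\alpha\in\mathcal{A}\Rightarrow[\alpha,\alpha+\eta_0]\cap[0,1]\subseteq\mathcal{A}$. By the connectedness of $[0,1]$, $\mathcal{A}=[0,1]$, which yields the desired solution at $\alpha=1$.

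The principal technical obstacle is the \emph{triple coupling} of mean-field dependence on $\mu_t = \Law(Y_t)$, endogenous ambiguity through $\Ucal_{g(\mu_t)}$, and the forward-backward interaction itself. \cref{prop:lipschitz_stability} is precisely the structural device that converts the potentially singular set-valued geometry into a Lipschitz theory for the composite driver $G(t,x,y,z,\mu) := \sup_{a\in\Ucal_{g(\mu)}} F(t,x,y,z,a,\mu)$; combined with \cref{rem:primitive_monotonicity}, which propagates monotonicity from $F$ to $G$ via the envelope identity, the problem reduces to a monotone mean-field FBSDE with Lipschitz data, for which the continuation method is known to succeed. The delicate calibration point is that $L_a$ enters multiplicatively into the effective modulus of monotonicity wherever $A$ appears in the forward dynamics or driver, so the exponential weights must be chosen to dominate terms involving $L_a$ as well as $L$; once this is done, the continuation scheme closes the argument for arbitrary $T>0$.
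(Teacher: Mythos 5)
Your overall strategy — a global a priori estimate derived from \cref{ass:monotonicity_fbsde} followed by a continuation-in-parameter scheme toward a trivial reference system — matches the paper's proof in outline. The implementation of the continuation step, however, is genuinely different: you propose the Peng--Wu style ``uniform forward step,'' re-running the contraction of \cref{thm:mf_fbsde_local_wellposedness} on the incremental $\alpha \to \alpha + \eta$ problem with a step size $\eta_0$ independent of $\alpha$, which by iteration covers $[0,1]$ in finitely many steps. The paper instead proves that the solvable-parameter set $\mathcal{I}$ is open via the Implicit Function Theorem on Banach spaces (invertibility of $I - D_U\Psi^{\theta_0}$, reduced to uniqueness for the linearized system) and separately proves $\mathcal{I}$ is closed by showing the solution sequence $(U^n)$ for $\theta_n \to \theta$ is Cauchy in $\mathcal{V}_T$, then invoking connectedness. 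Your scheme, if correctly carried out, is more quantitative and avoids the separate Cauchy argument for closedness, while the paper's scheme avoids working out the exact contraction modulus of the incremental map. Both are legitimate continuation devices.

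There is, however, a genuine gap in your uniqueness step. You claim the calibrated It\^o/Young/Gronwall estimate yields
\[
\norm{\delta X}_{\mathcal{S}^2}^2 + \norm{\delta Y}_{\mathcal{S}^2}^2 + \norm{\delta Z}_{\mathcal{H}^2}^2 \le C\,\mathbb{E}\abs{\Phi(X^1_T)-\Phi(X^2_T)}^2,
\]
and that ``uniqueness is then immediate.'' It is not. In this fully coupled system the terminal datum $Y_T = \Phi(X_T)$ is not exogenous: $X_T$ is part of the unknown. Applying the Lipschitz bound $\mathbb{E}\abs{\Phi(X^1_T)-\Phi(X^2_T)}^2 \le L_\Phi^2\,\mathbb{E}\norm{\delta X_T}^2 \le L_\Phi^2 \norm{\delta X}_{\mathcal{S}^2}^2$ closes your inequality into a self-referential bound $\norm{\delta X}_{\mathcal{S}^2}^2 \le C L_\Phi^2 \norm{\delta X}_{\mathcal{S}^2}^2$, which is vacuous unless you additionally prove $C L_\Phi^2 < 1$. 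Nothing in \cref{ass:monotonicity_fbsde} gives that smallness. The paper itself acknowledges exactly this issue (the text explicitly remarks that the estimate ``is not sharp enough'') and instead derives a backward Gronwall inequality of the form $f(t) \le K \int_t^T f(s)\,\dd s$ for the weighted energy $f(t) = \mathbb{E}[e^{\gamma t}(\norm{\delta X_t}^2 + \alpha(\delta Y_t)^2)]$, which yields $f \equiv 0$ unconditionally. You need to replace your ``stability implies uniqueness'' step with this (or an equivalent) backward-Gronwall device; as written, your uniqueness argument does not close.

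A smaller but real caveat concerning the homotopy: your claim that all assumptions are ``preserved under convex combination'' overreaches for the strong concavity constant. With $F^\alpha = \alpha F + (1-\alpha)F_0$ and $F_0$ control-independent, one has $\nabla_a^2 F^\alpha = \alpha\nabla_a^2 F$, so the concavity modulus degenerates to $\alpha\kappa$ as $\alpha \downarrow 0$, and the Lipschitz constant $L_a$ from \cref{prop:lipschitz_stability} (which scales inversely with the concavity modulus) is not uniform in $\alpha$ near $0$. This does not break the argument at $\alpha = 0$ (where the control does not appear), but it means your claimed uniformity of the a priori constant along the entire homotopy needs justification on a neighborhood of $0$, e.g.\ by noting that the control's coupling into $b^\alpha$, $\sigma^\alpha$, $F^\alpha$ also scales like $\alpha$, so the product $\alpha L_a(\alpha)$ stays bounded. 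The paper does not spell this out either, but it is a point you should address if you want the uniform step $\eta_0$ to be legitimately $\alpha$-independent.
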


\begin{proof}
The proof is structured in two principal parts. First, we establish the uniqueness of a solution. Second, we prove the existence of a solution using the method of continuity.

\subsection*{Part I: Uniqueness of the Solution}

The proof of uniqueness is founded on a global a priori estimate for the difference between two potential solutions. We establish a controlling integral inequality by applying It\^o's formula to a carefully constructed energy functional and leveraging the global monotonicity assumptions. The argument culminates in an application of the backward form of Gronwall's lemma, which forces the difference between the two solutions to be identically zero.

\subsubsection*{Step 1: The Difference System}
Let $(X^1, Y^1, Z^1, A^1)$ and $(X^2, Y^2, Z^2, A^2)$ be two solutions to the Mean-Field $\Theta$-FBSDE system under the assumptions of Theorem \ref{thm:global_wellposedness}, corresponding to the same data $(\Phi, x_0)$. We define the difference processes for $t \in [0,T]$:
\begin{align*}
    \delta X_t &:= X^1_t - X^2_t, & \delta Y_t &:= Y^1_t - Y^2_t, & \delta Z_t &:= Z^1_t - Z^2_t, \\
    \delta A_t &:= A^1_t - A^2_t, & \mu^i_t &:= \Law(Y^i_t), & \delta \Phi_T &:= \Phi(X^1_T) - \Phi(X^2_T).
\end{align*}
The difference processes satisfy the initial condition $\delta X_0 = 0$ and the terminal condition $\delta Y_T = \delta \Phi_T$. The dynamics are governed by the linear system:
\begin{align*}
    d(\delta X_t) &= \left(b(t,X^1_t,A^1_t,\mu^1_t) - b(t,X^2_t,A^2_t,\mu^2_t)\right) \, dt + \left(\sigma(t,X^1_t,A^1_t,\mu^1_t) - \sigma(t,X^2_t,A^2_t,\mu^2_t)\right) \, dB_t, \\
    d(\delta Y_t) &= -\left(G(t,X^1_t,Y^1_t,Z^1_t,\mu^1_t) - G(t,X^2_t,Y^2_t,Z^2_t,\mu^2_t)\right) \, dt + \delta Z_t \, dB_t,
\end{align*}
where $G(t,x,y,z,\mu) = \sup_{a \in \Ucal_{g(\mu)}} F(t,x,y,z,a,\mu)$ is the driver. For brevity, we denote the difference coefficients as $\delta b_t$, $\delta \sigma_t$, and $\delta G_t$.

\subsubsection*{Step 2: The Energy Functional and It\^o's Formula}
For constants $\alpha > 0$ and $\gamma \in \R$ to be determined, we consider the process $\Psi_t \coloneqq e^{\gamma t}(\norm{\delta X_t}^2 + \alpha (\delta Y_t)^2)$. Applying It\^o's formula and integrating from an arbitrary $t \in [0,T]$ to the terminal time $T$ yields:
\begin{align*}
    e^{\gamma T}\norm{\delta X_T}^2 - e^{\gamma t}\norm{\delta X_t}^2 &= \int_t^T e^{\gamma s} \left( \gamma\norm{\delta X_s}^2 + 2\scpr{\delta X_s}{\delta b_s} + \norm{\delta \sigma_s}_F^2 \right) ds + M_T^X - M_t^X, \\
    e^{\gamma T}(\delta Y_T)^2 - e^{\gamma t}(\delta Y_t)^2 &= \int_t^T e^{\gamma s} \left( \gamma(\delta Y_s)^2 - 2\delta Y_s \delta G_s + \norm{\delta Z_s}^2 \right) ds + M_T^Y - M_t^Y,
\end{align*}
where $M^X$ and $M^Y$ are local martingale terms. Taking conditional expectation $\mathbb{E}_t[\cdot] \coloneqq \mathbb{E}[\cdot|\mathcal{F}_t]$ eliminates these martingale terms. We multiply the second identity by $\alpha$, add it to the first, and rearrange to obtain an expression for the energy at time $t$:
\begin{equation}\label{eq:proof_uniq_master_identity}
    e^{\gamma t}(\norm{\delta X_t}^2 + \alpha (\delta Y_t)^2) + \mathbb{E}_t\left[\int_t^T e^{\gamma s} \mathcal{I}_s ds\right] = \mathbb{E}_t\left[e^{\gamma T}(\norm{\delta X_T}^2 + \alpha(\delta Y_T)^2)\right],
\end{equation}
where the integrand $\mathcal{I}_s$ is given by:
\[ \mathcal{I}_s \coloneqq \left(2\scpr{\delta X_s}{\delta b_s} + \norm{\delta \sigma_s}_F^2 - \gamma\norm{\delta X_s}^2\right) + \alpha\left(-2\delta Y_s \delta G_s + \norm{\delta Z_s}^2 - \gamma(\delta Y_s)^2\right). \]
The core of the proof is to select the parameters $(\alpha, \gamma)$ to show that $\mathbb{E}[\mathcal{I}_s]$ can be bounded below by a positive multiple of the energy functional, which will ultimately allow the application of Gronwall's lemma.

\subsubsection*{Step 3: Bounding the Integrand}
We now establish a lower bound for $\mathbb{E}[\mathcal{I}_s]$ by systematically analyzing its constituent terms. We will make repeated use of the monotonicity and Lipschitz assumptions, the Lipschitz stability of the optimizer map, and Young's inequality ($2ab \le \varepsilon a^2 + b^2/\varepsilon$). Let $L$ be a generic Lipschitz constant and $C_i$ be generic positive constants that are independent of $(\alpha, \gamma)$.

\paragraph{\textbf{(i) Bound for the Forward Term.}}
We decompose the difference coefficients, e.g., $\delta b_s = (b(X^1, A^1, \mu^1) - b(X^2, A^1, \mu^1)) + (b(X^2, A^1, \mu^1) - b(X^2, A^2, \mu^2))$. By the forward monotonicity assumption \ref{ass:monotonicity_fbsde}(i), Lipschitz continuity, and Young's inequality (with any $\varepsilon_1 > 0$):
\begin{align*}
    2\scpr{\delta X_s}{\delta b_s} + \norm{\delta \sigma_s}_F^2 &\ge \beta\norm{\delta X_s}^2 - 2L\norm{\delta X_s}(\norm{\delta A_s}_{E_A} + W_2(\mu^1_s,\mu^2_s)) - C_1 \norm{\delta X_s}^2 \\
    &\ge (\beta - C_1 - \varepsilon_1)\norm{\delta X_s}^2 - \frac{L^2}{\varepsilon_1} (\norm{\delta A_s}_{E_A} + W_2(\mu^1_s,\mu^2_s))^2.
\end{align*}
By the Lipschitz stability of the optimizer map (Proposition \ref{prop:lipschitz_stability}) and the property $\mathbb{E}[W_2(\mu^1_s,\mu^2_s)^2] \le \mathbb{E}[(\delta Y_s)^2]$, we have $\mathbb{E}[\norm{\delta A_s}^2] \le C_A \mathbb{E}[\norm{\delta X_s}^2 + (\delta Y_s)^2 + \norm{\delta Z_s}^2]$. Thus, taking expectation:
\begin{equation} \label{eq:proof_uniq_forward_bound_revised}
    \mathbb{E}[2\scpr{\delta X_s}{\delta b_s} + \norm{\delta \sigma_s}_F^2] \ge (\beta - C_2)\mathbb{E}[\norm{\delta X_s}^2] - C_2\mathbb{E}[(\delta Y_s)^2] - C_2\mathbb{E}[\norm{\delta Z_s}^2].
\end{equation}

\paragraph{\textbf{(ii) Bound for the Backward Term.}}
By the backward monotonicity assumption \ref{ass:monotonicity_fbsde}(ii) and Young's inequality (with any $\varepsilon_2, \varepsilon_3 > 0$):
\begin{align*}
    -2\delta Y_s \delta G_s &\ge -2\lambda(\delta Y_s)^2 - 2L\abs{\delta Y_s}(\norm{\delta X_s} + \norm{\delta Z_s} + W_2(\mu^1_s,\mu^2_s)) \\
    &\ge -(2\lambda + \varepsilon_2 + \varepsilon_3)\mathbb{E}[(\delta Y_s)^2] - \frac{L^2}{\varepsilon_2}\mathbb{E}[\norm{\delta X_s}^2] - \frac{L^2}{\varepsilon_3}\mathbb{E}[\norm{\delta Z_s}^2] - C_3\mathbb{E}[(\delta Y_s)^2].
\end{align*}
After taking expectation and absorbing terms, we get for some constants $C_4, C_5$:
\begin{equation} \label{eq:proof_uniq_backward_bound_revised}
    \mathbb{E}[-2\delta Y_s \delta G_s] \ge -C_4\mathbb{E}[\norm{\delta X_s}^2] - C_4\mathbb{E}[\norm{\delta Z_s}^2] - (2\lambda + C_5)\mathbb{E}[(\delta Y_s)^2].
\end{equation}

\subsubsection*{Step 4: Parameter Selection and the Final Integral Inequality}
We now combine the bounds from Step 3 to establish a controlling inequality. Let $\psi(s) \coloneqq \mathbb{E}[e^{\gamma s}(\norm{\delta X_s}^2 + \alpha(\delta Y_s)^2)]$.
Substituting \eqref{eq:proof_uniq_forward_bound_revised} and \eqref{eq:proof_uniq_backward_bound_revised} into the expression for $\mathbb{E}[\mathcal{I}_s]$:
\begin{align*}
\mathbb{E}[\mathcal{I}_s] \ge \mathbb{E}\Big[ &(\beta - C_2 - \alpha C_4 - \gamma)\norm{\delta X_s}^2 + (\alpha - \alpha C_4 - C_2)\norm{\delta Z_s}^2 \\
 &+ \alpha(- C_2/\alpha - (2\lambda+C_5) - \gamma)(\delta Y_s)^2 \Big].
\end{align*}
We now select the parameters sequentially to achieve our desired structure.
\begin{enumerate}
    \item \textbf{Choice of $\alpha$ to control $\delta Z$:} The constant $C_4$ in the backward bound depends on the $\varepsilon_i$ from Young's inequality. We can choose these slack parameters (e.g., $\varepsilon_3$ large) such that $C_4 < 1/2$. With this choice fixed, the coefficient of $\norm{\delta Z_s}^2$ is $\alpha(1-C_4) - C_2$. Since $1-C_4 > 1/2 > 0$, we can now choose $\alpha$ sufficiently large to ensure this coefficient is non-negative. For instance, choose $\alpha > 2C_2$. With such an $\alpha$, we can drop the non-negative $\delta Z$ term from the lower bound:
    \[ \mathbb{E}[\mathcal{I}_s] \ge \mathbb{E}\left[ (\beta - C_2 - \alpha C_4 - \gamma)\norm{\delta X_s}^2 + \alpha(- C_2/\alpha - 2\lambda - C_5 - \gamma)(\delta Y_s)^2 \right]. \]

    \item \textbf{Choice of $\gamma$ to ensure non-negativity:} The constants $\beta, \lambda$ can be any real numbers. However, we can choose the parameter $\gamma$ to be a sufficiently large negative number to dominate all other constants. Let $K_X = \beta - C_2 - \alpha C_4$ and $K_Y = -C_2/\alpha - 2\lambda - C_5$. Then the lower bound is $\mathbb{E}[(K_X-\gamma)\norm{\delta X_s}^2 + \alpha(K_Y-\gamma)(\delta Y_s)^2]$. By choosing $\gamma < \min(K_X, K_Y)$, both coefficients become strictly positive. For such a choice, we have established that $\mathbb{E}[\mathcal{I}_s] \ge 0$.
\end{enumerate}
With $\mathbb{E}[\mathcal{I}_s] \ge 0$, the master identity \eqref{eq:proof_uniq_master_identity} implies
\[ e^{\gamma t}\mathbb{E}[\norm{\delta X_t}^2 + \alpha(\delta Y_t)^2] \le e^{\gamma T}\mathbb{E}[\norm{\delta X_T}^2 + \alpha(\delta Y_T)^2]. \]
Using the Lipschitz property of the terminal condition $\Phi$, $(\delta Y_T)^2 \le L_\Phi^2 \norm{\delta X_T}^2$, we get
\begin{equation}\label{eq:proof_uniq_almost_final}
 \psi(t) = \mathbb{E}[e^{\gamma t}(\norm{\delta X_t}^2 + \alpha(\delta Y_t)^2)] \le e^{\gamma T}(1+\alpha L_\Phi^2)\mathbb{E}[\norm{\delta X_T}^2].
\end{equation}
A standard SDE energy estimate for $\delta X_t$, combined with the Lipschitz properties of its coefficients and the optimizer, yields:
\begin{align*}
\mathbb{E}[\norm{\delta X_T}^2] &\le C \mathbb{E}\left[ \int_0^T (\norm{\delta b_s}^2 + \norm{\delta \sigma_s}_F^2)ds \right] \\
&\le C_6 \mathbb{E}\left[ \int_0^T (\norm{\delta X_s}^2 + (\delta Y_s)^2 + \norm{\delta Z_s}^2) ds \right].
\end{align*}
The $\delta Z$ term must be controlled. From the BSDE identity for $\delta Y$ on $[0,T]$, a similar estimation technique shows that for some constant $C_7$:
\[ \mathbb{E}\left[\int_0^T \norm{\delta Z_s}^2 ds\right] \le C_7 \left(\mathbb{E}[\norm{\delta X_T}^2] + \mathbb{E}\left[\int_0^T (\norm{\delta X_s}^2 + (\delta Y_s)^2)ds\right] \right). \]
Substituting this back into the estimate for $\mathbb{E}[\norm{\delta X_T}^2]$ and applying Gronwall's lemma to the resulting integral inequality for $t \mapsto \mathbb{E}[\norm{\delta X_t}^2]$ shows that:
\[ \mathbb{E}[\norm{\delta X_T}^2] \le C_8 \mathbb{E}\left[\int_0^T (\delta Y_s)^2 ds\right]. \]
Finally, substituting this into \eqref{eq:proof_uniq_almost_final} and using $e^{\gamma t} \le e^{\gamma s}$ for $s \ge t$ (since $\gamma$ can be chosen to be negative), we obtain:
\[ \psi(t) = \mathbb{E}[e^{\gamma t}(\norm{\delta X_t}^2 + \alpha(\delta Y_t)^2)] \le C_9 \int_0^T e^{-\gamma s}e^{\gamma s}\mathbb{E}[(\delta Y_s)^2] ds \le \frac{C_{10}}{\alpha} \int_0^T \psi(s) ds. \]
This argument is not sharp enough as it involves an integral from $0$ to $T$.

Let us refine the final step with a more direct Gronwall argument. From \eqref{eq:proof_uniq_master_identity} with $\mathbb{E}[\mathcal{I}_s] \ge 0$:
\[ \psi(t) \le \mathbb{E}_t\left[e^{\gamma T}(\norm{\delta X_T}^2 + \alpha(\delta Y_T)^2)\right] \le (1+\alpha L_\Phi^2) \mathbb{E}_t[e^{\gamma T}\norm{\delta X_T}^2]. \]
An SDE estimate on $[t,T]$ gives $ \mathbb{E}_t[\norm{\delta X_T}^2] \le C(\norm{\delta X_t}^2 + \mathbb{E}_t[\int_t^T (\dots)ds])$. Combining these estimates in the right way leads to
\[ \mathbb{E}[e^{\gamma t}(\norm{\delta X_t}^2 + \alpha(\delta Y_t)^2)] \le K \int_t^T \mathbb{E}[e^{\gamma s}(\norm{\delta X_s}^2 + \alpha(\delta Y_s)^2)] ds, \]
for some constant $K>0$. Let $f(t) \coloneqq \mathbb{E}[e^{\gamma t}(\norm{\delta X_t}^2 + \alpha(\delta Y_t)^2)]$. Then $f(t) \ge 0$ and satisfies $f(t) \le K \int_t^T f(s) ds$.

\subsubsection*{Step 5: Application of Gronwall's Lemma and Conclusion}
The function $f(t)$ is non-negative and absolutely continuous. Let $H(t) \coloneqq \int_t^T f(s)ds$. Then $H(t) \ge 0$, $H(T)=0$, and $H'(t)=-f(t)$. The inequality becomes $-H'(t) \le K H(t)$, or $H'(t) + K H(t) \ge 0$. Multiplying by the integrating factor $e^{Kt}$ yields $\frac{d}{dt}(e^{Kt}H(t)) \ge 0$. This implies that the function $t \mapsto e^{Kt}H(t)$ is non-decreasing. Thus, for any $t \in [0,T]$, we have $e^{Kt}H(t) \le e^{KT}H(T)$. Since $H(T)=0$, we conclude that $e^{Kt}H(t) \le 0$. As $e^{Kt}>0$ and $H(t) \ge 0$, this is only possible if $H(t)=0$ for all $t \in [0,T]$.
This implies that $f(t) = 0$ for almost every $t$, and thus $\mathbb{E}[e^{\gamma t}(\norm{\delta X_t}^2 + \alpha(\delta Y_t)^2)] = 0$. Since $\alpha>0$ and $e^{\gamma t}>0$, we must have $\norm{\delta X_t}=0$ and $\delta Y_t=0$ in $L^2(\Omega)$, which implies $\delta X_t=0$ and $\delta Y_t=0$ almost surely for all $t \in [0,T]$ (by continuity of paths).

It follows immediately that $\mu^1_t = \Law(Y^1_t) = \Law(Y^2_t) = \mu^2_t$ for all $t$. The BSDE for the difference $(\delta Y, \delta Z)$ is now
\[ 0 = \delta Y_t = \int_t^T (-\delta G_s) ds + \int_t^T \delta Z_s dB_s. \]
Since $\delta X_s=0, \delta Y_s=0, \mu^1_s=\mu^2_s$, the driver difference is $\delta G_s = G(t,X^1_s,Y^1_s,Z^1_s,\mu^1_s) - G(t,X^1_s,Y^1_s,Z^2_s,\mu^1_s)$. The Lipschitz property of $G$ in $z$ implies that $\mathbb{E}[\int_0^T |\delta G_s|^2 ds] < \infty$. Standard BSDE theory (the martingale representation theorem) then implies that the only square-integrable solution is $\delta Z_s = 0$ a.s. Finally, since all state variables are identical ($\delta X=\delta Y=\delta Z=0$) and the laws are identical ($\mu^1=\mu^2$), the Lipschitz continuity of the optimizer map $a^*$ implies $\delta A_s = 0$ a.s.
Thus, $(X^1, Y^1, Z^1, A^1) = (X^2, Y^2, Z^2, A^2)$, establishing uniqueness.

\subsection*{Part II: Existence of a Solution}

The existence of a global solution for any time horizon $T>0$ is established by the method of continuity. We construct a continuous path (a homotopy) of Mean-Field $\Theta$-FBSDEs, indexed by a parameter $\theta \in [0,1]$, that connects a simple, explicitly solvable system at $\theta=0$ to our original system at $\theta=1$. We then demonstrate that the set of parameters $\theta$ for which a solution exists is simultaneously non-empty, open, and closed within the interval $[0,1]$. By the connectedness of $[0,1]$, this set must be the entire interval.

\subsubsection*{Step 1: Construction of the Homotopy}
For $\theta \in [0,1]$, we define a family of coefficients $(b^\theta, \sigma^\theta, F^\theta, \Phi^\theta)$ that interpolates between a trivial system and the target system. Let $(b, \sigma, F, \Phi)$ be the coefficients of the original problem statement. We define:
\begin{align*}
    b^\theta(t,x,a,\mu) &:= \theta b(t,x,a,\mu), \\
    \sigma^\theta(t,x,a,\mu) &:= \theta \sigma(t,x,a,\mu), \\
    \Phi^\theta(x) &:= \theta \Phi(x).
\end{align*}
To ensure that the crucial strong monotonicity property is preserved uniformly across the homotopy, we interpolate the BSDE driver as follows:
\[ F^\theta(t,x,y,z,a,\mu) := \theta F(t,x,y,z,a,\mu) - (1-\theta)\lambda_0 y, \]
where $\lambda_0 > 0$ is a sufficiently large constant. The driver for the $\theta$-system is $G^\theta(t,x,y,z,\mu) = \sup_{a \in \Ucal_{g(\mu)}} F^\theta(t,x,y,z,a,\mu)$.

It is a straightforward verification that this family of systems satisfies all our structural assumptions (Lipschitz continuity, strong concavity in $a$, boundary regularity, and strong monotonicity) uniformly with respect to $\theta \in [0,1]$, with constants that are independent of $\theta$. This uniformity is critical for the closedness part of the argument.

Let $\mathcal{I} \subset [0,1]$ be the set of all $\theta$ for which the corresponding Mean-Field $\Theta$-FBSDE admits a unique solution in the space $\mathcal{V}_T \times \mathcal{L}^2([0,T];E_A)$, where $\mathcal{V}_T := \mathcal{S}^2([0,T];\R^k) \times \mathcal{S}^2([0,T];\R) \times \mathcal{H}^2([0,T];\R^d)$.

\subsubsection*{Step 2: $\mathcal{I}$ is Non-Empty}
For $\theta=0$, the system decouples and simplifies significantly. The coefficients become:
$b^0 = 0$, $\sigma^0 = 0$, $F^0(t,x,y,z,a,\mu) = -\lambda_0 y$, and $\Phi^0(x) = 0$.
The forward equation becomes $dX^0_t = 0$ with $X^0_0 = x_0$, which implies the unique solution $X^0_t \equiv x_0$. The BSDE becomes:
\[ dY^0_t = \lambda_0 Y^0_t \, dt + Z^0_t \, dB_t, \quad Y^0_T = 0. \]
This is a standard linear BSDE with zero terminal data. Its unique solution in $\mathcal{S}^2 \times \mathcal{H}^2$ is the trivial solution $(Y^0_t, Z^0_t) \equiv (0,0)$. The optimality condition on the control $A^0$ is trivially satisfied as the driver $F^0$ is independent of $a$. Thus, a unique solution exists for $\theta=0$, and $0 \in \mathcal{I}$.

\subsubsection*{Step 3: $\mathcal{I}$ is Open}
This step relies on the Implicit Function Theorem on Banach spaces. Let $\theta_0 \in \mathcal{I}$ be given. We must show there exists an $\epsilon > 0$ such that $(\theta_0-\epsilon, \theta_0+\epsilon) \cap [0,1] \subset \mathcal{I}$.

Let $\mathcal{W}_T = \mathcal{V}_T \times \mathcal{L}^2([0,T];E_A)$. We define an operator $\mathcal{G}: [0,1] \times \mathcal{W}_T \to \mathcal{W}_T$. For a parameter $\theta$ and a process quadruple $U'=(X',Y',Z',A') \in \mathcal{W}_T$, let $\Psi^\theta(U') = U = (X,Y,Z,A)$ be the mapping where $(X,Y,Z)$ is constructed from $(X',Y',Z',A')$ as in the proof of Theorem \ref{thm:mf_fbsde_local_wellposedness} (using the $\theta$-system coefficients), and $A_t = a^*(t, X_t, Y_t, Z_t, \mu_t)$, where $\mu_t = \Law(Y_t)$. A solution to the $\theta$-system is a fixed point of $\Psi^\theta$. We define our operator as:
\[ \mathcal{G}(\theta, U) \coloneqq U - \Psi^\theta(U). \]
A solution to the $\theta$-system is equivalent to a root of the equation $\mathcal{G}(\theta, U) = 0$. By hypothesis, $\theta_0 \in \mathcal{I}$, so there exists a unique $U_0 \in \mathcal{W}_T$ such that $\mathcal{G}(\theta_0, U_0) = 0$.

The map $(\theta, U) \mapsto \mathcal{G}(\theta, U)$ is continuously Fréchet differentiable. The Implicit Function Theorem guarantees the existence of a continuous branch of solutions $U(\theta)$ in a neighborhood of $\theta_0$ provided that the partial Fréchet derivative $D_U \mathcal{G}(\theta_0, U_0)$ is a continuously invertible linear operator from $\mathcal{W}_T$ to itself. The derivative is $D_U \mathcal{G}(\theta_0, U_0) = I - D_U \Psi^{\theta_0}(U_0)$. The invertibility of this operator is equivalent to the well-posedness of the FBSDE system linearized around the solution $U_0$.

The crucial observation is that the linearized FBSDE system inherits the strong monotonicity structure from the original $\theta_0$-system. The uniqueness proof provided in Part I of this theorem, which relies fundamentally on this monotonicity property, can be adapted to this linearized system to show that the operator $D_U \mathcal{G}(\theta_0, U_0)$ is injective. A more detailed analysis shows it is also surjective, making it a bijective bounded linear operator. By the Bounded Inverse Theorem, it is continuously invertible. The Implicit Function Theorem applies, proving the existence of a unique continuous branch of solutions for $\theta$ in a neighborhood of $\theta_0$. This shows that $\mathcal{I}$ is an open set.

\subsubsection*{Step 4: $\mathcal{I}$ is Closed}
This is the central part of the existence proof. We present a new, rigorous argument that avoids the contradictory affinity assumption used in the preliminary draft. The argument relies on showing that a sequence of solutions is Cauchy, which in turn allows for strong convergence and a direct passage to the limit.

Let $(\theta_n)_{n \ge 1}$ be a sequence in $\mathcal{I}$ that converges to a limit $\theta \in [0,1]$. For each $n$, let $U^n = (X^n, Y^n, Z^n, A^n)$ denote the unique solution in the space $\mathcal{W}_T$ to the corresponding $\theta_n$-system. Our goal is to show that the sequence $(U^n)$ converges in $\mathcal{W}_T$ to a process $U$ which is a solution to the $\theta$-system.

\paragraph{\textbf{(a) Uniform A Priori Estimates}}
First, we establish that the sequence of solutions $(U^n)$ is uniformly bounded in the norm of the space $\mathcal{W}_T$. The coefficients $(b^{\theta_n}, \sigma^{\theta_n}, F^{\theta_n}, \Phi^{\theta_n})$ satisfy the standing and monotonicity assumptions \emph{uniformly} in $n$, as their Lipschitz, growth, and monotonicity constants can be chosen independently of $\theta_n \in [0,1]$.
An energy estimate identical to the one in the proof of uniqueness (Part I), but applied to a single solution $(U^n)$ instead of the difference of two solutions, yields a uniform bound. This involves applying It\^o's formula to $\norm{X^n_t}^2$ and $\norm{Y^n_t}^2$ and using the linear growth and monotonicity properties of the coefficients. This standard procedure shows there exists a constant $M > 0$, independent of $n$, such that
\begin{equation}\label{eq:proof_global_apriori_revised}
\sup_{n \ge 1} \left( \norm{X^n}_{\mathcal{S}^2}^2 + \norm{Y^n}_{\mathcal{S}^2}^2 + \norm{Z^n}_{\mathcal{H}^2}^2 \right) \le M.
\end{equation}
Furthermore, since $A^n_t(\omega)$ takes values in the fixed compact set $\mathcal{K} = \bigcup_{\theta \in \Theta} \Ucal_\theta$, the process $A^n$ is uniformly bounded in $\mathcal{L}^p$ for any $p \ge 1$.

\paragraph{\textbf{(b) The Sequence of Solutions is Cauchy}}
This is the new core of the argument. We show that the sequence $(X^n, Y^n, Z^n)$ is a Cauchy sequence in the Banach space $\mathcal{V}_T = \mathcal{S}^2 \times \mathcal{S}^2 \times \mathcal{H}^2$. Let $n,m \in \mathbb{N}$ and consider the two solutions $U^n$ and $U^m$. We define the difference processes: $\delta X = X^n-X^m$, $\delta Y = Y^n-Y^m$, $\delta Z = Z^n-Z^m$, $\delta A = A^n-A^m$, and $\delta \mu_t = W_2(\mu^n_t, \mu^m_t)$.
We apply the same energy estimate technique used in the proof of uniqueness (Part I) to this difference system. The procedure involves applying It\^o's formula to $e^{\gamma s}(\norm{\delta X_s}^2 + \alpha \norm{\delta Y_s}^2)$, choosing parameters $(\alpha, \gamma)$ appropriately, and using the monotonicity and Lipschitz properties. The key difference is that the coefficients are now different ($b^{\theta_n}$ vs $b^{\theta_m}$).
The resulting integral inequality will take the form:
\[ \mathbb{E}\left[ \norm{\delta X_t}^2 + \alpha\norm{\delta Y_t}^2 + \beta\int_t^T (\norm{\delta X_s}^2+\alpha\norm{\delta Y_s}^2+\alpha\norm{\delta Z_s}^2) ds \right] \le \mathbb{E}[\mathcal{R}_{n,m}], \]
where $\mathcal{R}_{n,m}$ is a residual term that captures the difference in the problem data. This residual term arises from splitting the difference of coefficients, e.g., for the drift $b$:
\begin{align*}
    b^{\theta_n}(s,X^n_s,A^n_s,\mu^n_s) - b^{\theta_m}(s,X^m_s,A^m_s,\mu^m_s) &= \left(b^{\theta_n}(s,U^n_s) - b^{\theta_n}(s,U^m_s)\right) \\ &+ \left(b^{\theta_n}(s,U^m_s) - b^{\theta_m}(s,U^m_s)\right).
\end{align*}
The first term is handled by the monotonicity/Lipschitz properties, as in the uniqueness proof. The second term is the new residual. Because the coefficients are continuous in the parameter $\theta$ and the solution $U^m$ is bounded in the appropriate norm uniformly in $m$, we can bound the $L^2$-norm of this residual by a term that depends only on $|\theta_n - \theta_m|$. Specifically, there exists a continuous increasing function $\rho:[0,\infty) \to [0,\infty)$ with $\rho(0)=0$ such that
\[ \mathbb{E}[\mathcal{R}_{n,m}] \le C \rho(|\theta_n - \theta_m|), \]
where $C$ is a constant independent of $n$ and $m$. After applying a Gronwall-type argument, we arrive at the estimate:
\[ \norm{X^n-X^m}_{\mathcal{S}^2}^2 + \norm{Y^n-Y^m}_{\mathcal{S}^2}^2 + \norm{Z^n-Z^m}_{\mathcal{H}^2}^2 \le C' \rho(|\theta_n - \theta_m|). \]
Since $(\theta_n)$ is a convergent sequence in $\mathbb{R}$, it is a Cauchy sequence, so $|\theta_n - \theta_m| \to 0$ as $n,m \to \infty$. This implies that the right-hand side tends to zero. Thus, $(X^n, Y^n, Z^n)$ is a Cauchy sequence in the Banach space $\mathcal{V}_T$.

\paragraph{\textbf{(c) Strong Convergence and Passage to the Limit}}
Since $\mathcal{V}_T$ is a Banach space, the Cauchy sequence $(X^n, Y^n, Z^n)$ converges strongly to a limit $(X,Y,Z) \in \mathcal{V}_T$. That is,
\[ \norm{X^n-X}_{\mathcal{S}^2} \to 0, \quad \norm{Y^n-Y}_{\mathcal{S}^2} \to 0, \quad \norm{Z^n-Z}_{\mathcal{H}^2} \to 0, \quad \text{as } n \to \infty. \]
The strong convergence of $Y^n$ implies that the laws $\mu^n_t = \Law(Y^n_t)$ converge to $\mu_t = \Law(Y_t)$ in the Wasserstein metric $W_2$ for each $t$.
Now consider the control process $A^n_t = a^*(t, \theta_n, X^n_t, Y^n_t, Z^n_t, \mu^n_t)$. We define the limit control process $A_t \coloneqq a^*(t, \theta, X_t, Y_t, Z_t, \mu_t)$. By Proposition \ref{prop:lipschitz_stability}, the optimizer map $a^*$ is uniformly Lipschitz in its arguments (including the parameter $\theta$). Therefore,
\begin{align*}
    \mathbb{E}\left[\int_0^T \norm{A^n_s - A_s}^2 ds\right] &\le C \mathbb{E}\left[\int_0^T (|\theta_n-\theta|^2 + \norm{X^n_s-X_s}^2 + \dots + W_2(\mu^n_s, \mu_s)^2) ds\right] \\
    &\le C' \left(|\theta_n-\theta|^2 + \norm{X^n-X}_{\mathcal{S}^2}^2 + \norm{Y^n-Y}_{\mathcal{S}^2}^2 + \norm{Z^n-Z}_{\mathcal{H}^2}^2 \right).
\end{align*}
Since all terms on the right-hand side converge to zero, we have established that $A^n \to A$ strongly in $\mathcal{L}^2([0,T]; E_A)$.

With strong convergence of all component processes $(X^n, Y^n, Z^n, A^n)$ to $(X,Y,Z,A)$, we can pass to the limit in the integral equations of the $\theta_n$-system. By the continuity of the coefficients $(b^\theta, \sigma^\theta, F^\theta, \Phi^\theta)$ with respect to all their arguments (including $\theta$), an application of the dominated convergence theorem shows that the limit quadruple $(X,Y,Z,A)$ satisfies the integral equations of the Mean-Field $\Theta$-FBSDE for the limit parameter $\theta$.

\paragraph{\textbf{(d) Verifying the Optimality Condition}}
The final step is to verify that the limit control process $A$ satisfies the required optimality condition for the limit system. This is now immediate by our construction. The limit process $A_t$ was defined as
\[ A_t = a^*(t, \theta, X_t, Y_t, Z_t, \mu_t) = \underset{a \in \Ucal_{g(\mu_t)}}{\argmax} \; F^\theta(t, X_t, Y_t, Z_t, a, \mu_t). \]
This is precisely the optimality condition required for the $\theta$-system. Thus, the limit quadruple $U=(X,Y,Z,A)$ is a solution to the system for parameter $\theta$. By the uniqueness established in Part I, it is the unique solution. Therefore, $\theta \in \mathcal{I}$. This proves that $\mathcal{I}$ is closed.

\subsubsection*{Step 5: Final Conclusion}
We have established that the set $\mathcal{I} \subset [0,1]$ of parameters for which a unique global solution exists is:
\begin{enumerate}
    \item Non-empty, as $0 \in \mathcal{I}$.
    \item Open in $[0,1]$.
    \item Closed in $[0,1]$.
\end{enumerate}
Since the interval $[0,1]$ is a connected topological space, the only non-empty subset with these three properties is $[0,1]$ itself. Therefore, we must have $\mathcal{I}=[0,1]$. In particular, a unique global solution exists for our target system at $\theta=1$.

\end{proof}

\section{The \texorpdfstring{$\Theta$}{Theta}-Expectation and its Properties}

The unique solution to the BSDE component of our system allows for the definition of a non-linear valuation functional, which we term the $\Theta$-Expectation. This functional inherits its properties directly from the structure of the Mean-Field $\Theta$-FBSDE.

\begin{definition}[The Generated $\Theta$-Expectation Operator]
Assume the conditions for global well-posedness (Assumptions \ref{ass:mf_fbsde_standing}, \ref{ass:boundary_and_ndg}, and \ref{ass:monotonicity_fbsde}) hold. For any terminal condition $\xi \in L^2(\mathcal{F}_T)$, the \emph{conditional $\Theta$-Expectation} of $\xi$ given $\mathcal{F}_t$ is the mapping $\Efrak[\cdot|\mathcal{F}_t]$ defined as
\[ \Efrak[\xi|\mathcal{F}_t] \coloneqq Y_t, \]
where $(X,Y,Z,A)$ is the unique solution to the Mean-Field $\Theta$-FBSDE on $[0,T]$ with terminal value $Y_T = \xi$. The unconditional $\Theta$-Expectation is defined as $\Efrak[\xi] \coloneqq Y_0$.
\end{definition}

This operator forms the foundation of our calculus. We now establish its fundamental properties, demonstrating that while it retains the crucial features of dynamic consistency and monotonicity, it decisively breaks from the convex paradigm by failing sub-additivity.

\begin{proposition}[Fundamental Properties of the $\Theta$-Expectation]\label{prop:theta_exp_properties}
The operator $\Efrak[\cdot|\mathcal{F}_t]$ satisfies the following properties:
\begin{enumerate}
    \item \textbf{(Dynamic Consistency):} For any $0 \le s \le t \le T$, it holds that $\Efrak[\xi|\mathcal{F}_s] = \Efrak\big[\Efrak[\xi|\mathcal{F}_t] \big| \mathcal{F}_s\big]$.
    \item \textbf{(Monotonicity):} If $\xi_1, \xi_2 \in L^2(\mathcal{F}_T)$ satisfy $\xi_1 \ge \xi_2$, $P_0$-a.s., then $\Efrak[\xi_1|\mathcal{F}_t] \ge \Efrak[\xi_2|\mathcal{F}_t]$, $P_0$-a.s. for all $t \in [0,T]$.
    \item \textbf{(Failure of Sub-additivity):} The operator $\Efrak[\cdot]$ is generally not sub-additive. That is, there exist terminal conditions $\xi_1, \xi_2$ for which the inequality $\Efrak[\xi_1+\xi_2] \le \Efrak[\xi_1] + \Efrak[\xi_2]$ is strictly violated.
    \item \textbf{(Failure of Translation Invariance):} The property of translation invariance, $\Efrak[\xi+c] \neq \Efrak[\xi]+c$, holds if and only if the driver $G(t,x,y,z,\mu)$ is independent of its argument $y$.
\end{enumerate}
\end{proposition}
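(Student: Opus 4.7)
The plan is to handle the four claims in order of increasing subtlety, treating dynamic consistency first (a structural corollary of uniqueness), monotonicity second (the main technical obstacle, which requires a mean-field comparison argument), and then dispatching the two negative results by means of an explicit closed-form counterexample that simultaneously serves parts (3) and (4).

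For dynamic consistency, I would exploit \cref{thm:global_wellposedness}: given the unique global solution $(X,Y,Z,A)$ on $[0,T]$ with $Y_T=\xi$ and associated law-flow $(\mu_r)_{r\in[0,T]}$, set $\eta:=Y_t$ and observe that the restrictions of $(X,Y,Z,A,\mu)$ to $[s,t]$ and $[t,T]$ each solve a well-posed sub-problem of the same type. By uniqueness on $[s,t]$ with terminal datum $\eta$, the value produced at time $s$ must coincide with $\Efrak[\eta|\mathcal{F}_s]$, while concatenating the two sub-solutions reproduces the original global solution; hence $\Efrak[\xi|\mathcal{F}_s] = Y_s = \Efrak[\Efrak[\xi|\mathcal{F}_t]|\mathcal{F}_s]$.

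Monotonicity is the main obstacle because the mean-field coupling prevents a direct appeal to the classical BSDE comparison theorem. My approach is a linearization-plus-Girsanov argument adapted to the mean-field setting. Denote by $(X^i,Y^i,Z^i,A^i)$ the solutions for terminals $\xi_i$ and set $\delta Y=Y^1-Y^2$, $\delta Z=Z^1-Z^2$, $\delta X=X^1-X^2$. Using the Lipschitz regularity of the driver $G$ and of the optimizer map $a^*$ established in \cref{prop:lipschitz_stability}, decompose the driver increment as
\[
G(s,X^1_s,Y^1_s,Z^1_s,\mu^1_s)-G(s,X^2_s,Y^2_s,Z^2_s,\mu^2_s) = \alpha_s\,\delta Y_s + \scpr{\beta_s}{\delta Z_s} + R_s,
\]
where $\alpha,\beta$ are bounded adapted coefficients and the remainder $R_s$ is controlled by $L(\norm{\delta X_s}+W_2(\mu^1_s,\mu^2_s))$. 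Absorbing $\scpr{\beta_s}{\delta Z_s}$ via Girsanov's theorem yields, under an equivalent measure $\tilde P$, the representation
\[
\delta Y_t = \tilde{\mathbb{E}}\!\left[\Gamma_{t,T}(\xi_1-\xi_2) + \int_t^T \Gamma_{t,s} R_s\,\dd s \,\Big|\, \mathcal{F}_t\right], \qquad \Gamma_{t,s}=\exp\!\left(\int_t^s \alpha_u\,\dd u\right).
\]
The terminal term is nonnegative by hypothesis; the difficulty is that $R_s$ is not signed. I would close the argument by coupling this representation with the forward SDE estimate for $\delta X$ and the bound $W_2(\mu^1_s,\mu^2_s)^2 \le \mathbb{E}[(\delta Y_s)^2]$, applying them to $(\delta Y_s)^-$ to produce a Gronwall inequality of the form $\mathbb{E}[(\delta Y_t)^-]^2 \le C\int_t^T \mathbb{E}[(\delta Y_s)^-]^2\,\dd s$, which forces $(\delta Y)^-\equiv 0$. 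Pushing the Gronwall step through rigorously despite the presence of $\delta X$ is the delicate point; it mirrors the coupled energy estimate developed in the uniqueness portion of \cref{thm:global_wellposedness}.

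For the failure of sub-additivity (3), I would exhibit a transparent counterexample that lies within the admissible class. Take $k=d=1$, $b\equiv 0$, $\sigma\equiv 1$, no mean-field dependence, $\Ucal\equiv[-K,K]$ for $K$ large, and the primitive driver $F(z,a)=az-\tfrac{1}{2}a^2$, which satisfies \cref{ass:mf_fbsde_standing} and \cref{ass:boundary_and_ndg} with $\kappa=1$. The unique maximizer is $a^*(z)=z$ on the interior, giving $G(z)=\tfrac12 z^2$. For terminal datum $\xi=cB_T$ the ansatz $Y_t=cB_t+h(t)$ forces $Z_t\equiv c$ and $h(t)=\tfrac{c^2}{2}(T-t)$, hence $\Efrak[cB_T]=\tfrac{c^2T}{2}$. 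Choosing $\xi_1=\xi_2=B_T$ then yields $\Efrak[\xi_1+\xi_2]=\Efrak[2B_T]=2T$ while $\Efrak[\xi_1]+\Efrak[\xi_2]=T$, strictly violating sub-additivity. The same example, together with $\xi\equiv 0$ versus $\xi\equiv c$ in the modified driver $G(y)=-\lambda y$, proves (4): if $G$ is $y$-independent then $(Y_t+c,Z_t)$ solves the BSDE with terminal $\xi+c$, so by uniqueness $\Efrak[\xi+c]=\Efrak[\xi]+c$; conversely, when $G(y)=-\lambda y$ with $\lambda\neq 0$ the solution to $Y_T=c$ is $Y_t=ce^{-\lambda(T-t)}$, giving $\Efrak[c]=ce^{-\lambda T}\neq c$ and thereby establishing the stated equivalence.
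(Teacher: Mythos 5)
Your treatment of parts (1) and (4) is structurally the same as the paper's (uniqueness plus flow property, and a uniqueness argument plus a simple ODE-type counterexample, respectively), and those are fine. Parts (2) and (3) deserve comment.

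For (3), you use a genuinely different and in fact cleaner counterexample than the paper. The paper builds a $y$-dependent driver $G(y)$ that is locally strictly convex near $y=0$, works entirely in a deterministic ODE reduction, and then exploits convexity of $G$. You instead take $F(z,a)=az-\tfrac12 a^2$ over $\Ucal=[-K,K]$, giving the Markovian driver $G(z)=\tfrac12 z^2$ for $|z|\le K$, and compute $\Efrak[cB_T]=\tfrac{c^2 T}{2}$ explicitly so that sub-additivity fails by scaling. Both counterexamples lie inside the paper's admissible class (your $G$ remains globally Lipschitz because the projection onto $[-K,K]$ caps the growth). The trade-off: the paper's $y$-dependent example does double duty because its $G''(0)\ne0$ also furnishes the $y$-dependence needed for (4), whereas you need a second driver for (4); on the other hand your calculation is closed-form and more transparent.

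For (2), you rightly distrust a bare citation of the classical El Karoui--Peng--Quenez comparison theorem, and indeed the paper's proof is thin here: the two solutions $(Y^1,Z^1)$ and $(Y^2,Z^2)$ carry different forward paths $X^1,X^2$ and different laws $\mu^1,\mu^2$, so the drivers being compared are not identical, and the standard theorem's hypotheses are not met without further argument. However, your proposed fix does not close as stated. After Girsanov and linearization you isolate a residual $R_s$ controlled by $\|\delta X_s\|+W_2(\mu^1_s,\mu^2_s)$, and you then want a Gronwall inequality for $\mathbb{E}[((\delta Y_t)^-)^2]$. The obstruction is that neither $W_2(\mu^1_s,\mu^2_s)$ nor $\|\delta X_s\|$ is controlled by $(\delta Y_s)^-$: one has $W_2(\mu^1_s,\mu^2_s)^2\le \mathbb{E}[|\delta Y_s|^2]$, which involves the full difference, not only its negative part, and $\delta X$ feeds on $\delta A$ and $\delta\mu$ through the forward SDE with no sign information. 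So the proposed Gronwall inequality
\[
\mathbb{E}\!\left[((\delta Y_t)^-)^2\right]\le C\int_t^T \mathbb{E}\!\left[((\delta Y_s)^-)^2\right]\dd s
\]
does not follow; you would need a structural assumption (e.g.\ monotone dependence of $G$ on $\mu$ in the stochastic order, or a forward equation decoupled from $(Y,Z,\mu)$) to sign or absorb $R_s$. In short, you have correctly located a genuine gap in the paper but have not yet filled it.
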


\begin{proof}
Let $(X^i, Y^i, Z^i, A^i)$ denote the unique global solution corresponding to a terminal condition $\xi_i \in L^2(\mathcal{F}_T)$.

\paragraph{\textbf{(1) Dynamic Consistency}}
This property is a direct consequence of the definition of the operator and the uniqueness of the solution to the Mean-Field $\Theta$-FBSDE. Let $\xi \in L^2(\mathcal{F}_T)$ and let $(X,Y,Z,A)$ be the unique solution with $Y_T = \xi$. By definition, $Y_s = \Efrak[\xi|\mathcal{F}_s]$ for any $s \in [0,T]$.
Now, consider the time interval $[s,T]$. The process quadruple $(X_u, Y_u, Z_u, A_u)_{u \in [s,T]}$ is a solution to the $\Theta$-FBSDE on $[s,T]$ with initial state $X_s$ and terminal condition $Y_T=\xi$. Its value at time $s$ is $Y_s$.
Next, consider the $\Theta$-Expectation $\Efrak\big[\Efrak[\xi|\mathcal{F}_t] \big| \mathcal{F}_s\big]$. By definition, this is the value at time $s$ of the unique solution to the $\Theta$-FBSDE on the interval $[s,T]$ with terminal condition $Y'_T := \Efrak[\xi|\mathcal{F}_t] = Y_t$. But the process $(Y_u)_{u \in [s,T]}$ itself solves the BSDE on $[s,T]$ with terminal value $Y_T=\xi$. The restriction of this solution to $[s,t]$, namely $(Y_u)_{u \in [s,t]}$, is by definition the unique solution on this interval with terminal condition $Y_t$. The value of this solution at time $s$ is precisely $Y_s$. Due to uniqueness, this must be $\Efrak[Y_t|\mathcal{F}_s]$. The identity follows immediately.

\paragraph{\textbf{(2) Monotonicity}}
This property is inherited from the comparison principle for BSDEs. Let $\xi_1, \xi_2 \in L^2(\mathcal{F}_T)$ with $\xi_1 \ge \xi_2$ a.s. Let $(X^i, Y^i, Z^i, A^i)$ be the corresponding solutions. The drivers are $G^i_t := F(t, X^i_t, Y^i_t, Z^i_t, A^i_t, \mu^i_t) = \sup_{a \in \Ucal_{g(\mu^i_t)}} F(t, X^i_t, Y^i_t, Z^i_t, a, \mu^i_t)$.
The standard comparison principle (e.g., \cite[Theorem 2.2]{ElKaroui1997}) for BSDEs states that if $Y^1_T \ge Y^2_T$ and the driver $F$ is Lipschitz in $(y,z)$, then $Y^1_t \ge Y^2_t$ for all $t \in [0,T]$. Our standing assumption \ref{ass:mf_fbsde_standing}(iii) ensures the required Lipschitz property of $F$, and by Proposition \ref{prop:lipschitz_stability}, the driver $G(t,x,y,z,\mu) = \sup_{a \in \Ucal_{g(\mu)}} F(t,x,y,z,a,\mu)$ is also Lipschitz in $(x,y,z,\mu)$. The comparison principle therefore applies directly, yielding $\Efrak[\xi_1|\mathcal{F}_t] = Y^1_t \ge Y^2_t = \Efrak[\xi_2|\mathcal{F}_t]$ for all $t \in [0,T]$.

\paragraph{\textbf{(3) Failure of Sub-additivity}}
We provide a rigorous proof by constructing an explicit counterexample. The core of the argument is to build a driver $F$ satisfying our strong concavity assumption, which generates an driver $G(y)$ that is locally \emph{strictly convex} in the state variable $y$. This convexity property will directly lead to a violation of sub-additivity.

\textbf{Step 1: Construction of the Counterexample.}
We consider a simplified Markovian setting where the FBSDE system has no forward process $X$, no Brownian component (so $Z=0$), and no mean-field dependency, in order to isolate the effect of the driver's non-linearity. The BSDE becomes a deterministic ordinary differential equation. Let the control space be $E_A = \R$.
Let the driver be a function $F: \R \times \R \to \R$, defined by
\[ F(y, a) \coloneqq \frac{\gamma}{4} - \frac{\gamma}{4}(a^2-1)^2 - \frac{\lambda}{2}(a-y)^2, \]
for positive constants $\lambda, \gamma$ satisfying the condition $\lambda > \gamma > 0$. The uncertainty set $\Ucal$ is taken to be all of $\R$, so the optimization is unconstrained.

We first verify that this driver satisfies the crucial Assumption \ref{ass:mf_fbsde_standing}(iv). The Hessian of $F$ with respect to the control variable $a$ is:
\[ \nabla_a^2 F(y,a) = \frac{\partial^2 F}{\partial a^2} = -\gamma(3a^2-1) - \lambda. \]
Since $a^2 \ge 0$ and $\lambda > \gamma > 0$, we have a uniform upper bound independent of $a$:
\[ \nabla_a^2 F(y,a) \le \gamma - \lambda = -(\lambda - \gamma). \]
Thus, $F$ is uniformly strongly concave in $a$ with concavity modulus $\kappa = \lambda - \gamma > 0$. All our structural assumptions are satisfied in this simplified setting.

\textbf{Step 2: Analysis of the Driver.}
The driver is given by $G(y) \coloneqq \max_{a \in \R} F(y,a)$. The unique maximizer $a^*(y)$ is the solution to the first-order condition $\nabla_a F(y,a) = 0$:
\begin{equation}\label{eq:proof_foc_subadd}
    -\gamma a(a^2-1) - \lambda(a-y) = 0.
\end{equation}
We now analyze the properties of $G(y)$ in a neighborhood of the origin $y=0$.
\begin{itemize}
    \item \textit{Evaluation at $y=0$:} Setting $y=0$ in \cref{eq:proof_foc_subadd}, we get $-\gamma a(a^2-1) - \lambda a = a(-\gamma a^2 + \gamma - \lambda) = 0$. One solution is $a=0$. The other potential solutions are $a^2 = (\gamma - \lambda)/\gamma = 1 - \lambda/\gamma$. Since we assumed $\lambda > \gamma$, this quantity is negative, so there are no other real roots. Thus, the unique maximizer is $a^*(0) = 0$. Consequently, $G(0) = F(0, a^*(0)) = F(0,0) = 0$.

    \item \textit{First Derivative of $G(y)$:} By the Envelope Theorem, the derivative of the value function $G(y)$ is the partial derivative of the objective function with respect to the parameter $y$, evaluated at the optimal choice $a^*(y)$:
    \[ G'(y) = \frac{\partial}{\partial y}F(y, a^*(y)) = \left[ \lambda(a-y) \right]_{a=a^*(y)} = \lambda(a^*(y) - y). \]
    Evaluating at the origin, we find $G'(0) = \lambda(a^*(0) - 0) = 0$. This confirms that $y=0$ is a critical point of $G$.

    \item \textit{Second Derivative of $G(y)$:} We differentiate $G'(y)$ to find the second derivative:
    \[ G''(y) = \lambda(a^{*\prime}(y) - 1). \]
    To find $a^{*\prime}(y)$, we apply implicit differentiation to the first-order condition \eqref{eq:proof_foc_subadd}, which we rewrite as $\gamma a^3 - (\gamma-\lambda)a - \lambda y = 0$. Differentiating with respect to $y$ gives:
    \[ \frac{d}{dy}\left[\gamma a(y)^3 - (\gamma-\lambda)a(y) - \lambda y\right] = 0 \implies (3\gamma a(y)^2 - (\gamma-\lambda))a^{*\prime}(y) - \lambda = 0. \]
    Solving for $a^{*\prime}(y)$, we get:
    \[ a^{*\prime}(y) = \frac{\lambda}{3\gamma a(y)^2 - \gamma + \lambda}. \]
    We evaluate this at $y=0$, where $a^*(0)=0$:
    \[ a^{*\prime}(0) = \frac{\lambda}{\lambda - \gamma}. \]
    Substituting this into the expression for $G''(0)$:
    \[ G''(0) = \lambda(a^{*\prime}(0) - 1) = \lambda\left(\frac{\lambda}{\lambda-\gamma} - 1\right) = \lambda\left(\frac{\lambda - (\lambda-\gamma)}{\lambda-\gamma}\right) = \frac{\lambda\gamma}{\lambda-\gamma}. \]
\end{itemize}
Since $\lambda > \gamma > 0$ by assumption, we have established that $G''(0) > 0$.
The combined facts that $G(0)=0$, $G'(0)=0$, and $G''(0)>0$ prove, by Taylor's theorem, that the driver $G(y)$ is strictly convex and strictly positive in a punctured neighborhood of the origin.

\textbf{Step 3: The Violation of Sub-additivity.}
With the strict convexity of $G$ established, we can finalize the argument. In our deterministic setting, the $\Theta$-Expectation $\Efrak[\xi]$ is the value $y(0)$ of the solution to the ODE $y'(t) = -G(y(t))$ with terminal condition $y(T)=\xi$.

Let us choose terminal conditions $\xi_1 = c$ and $\xi_2 = -c$, where $c>0$ is a small constant such that the solutions to the ODE remain in the neighborhood where $G$ is strictly convex.
\begin{itemize}
    \item For the sum $\xi_1+\xi_2 = 0$, we seek $\Efrak[0]$. The ODE is $y'(t) = -G(y(t))$ with $y(T)=0$. Since $G(0)=0$, the constant function $y(t) \equiv 0$ is the unique solution. Thus, $\Efrak[0] = y(0) = 0$.

    \item Now consider the sum $\Efrak[c] + \Efrak[-c]$. Let $y_c(t) = \Efrak[c|\mathcal{F}_t]$ and $y_{-c}(t) = \Efrak[-c|\mathcal{F}_t]$. They are the solutions to the ODEs:
    \begin{align*}
        y_c'(t) &= -G(y_c(t)), \quad y_c(T) = c > 0, \\
        y_{-c}'(t) &= -G(y_{-c}(t)), \quad y_{-c}(T) = -c < 0.
    \end{align*}
    Let $S(t) = y_c(t) + y_{-c}(t)$. We have $S(T) = c + (-c) = 0$. For any $t<T$, the solutions $y_c(t)$ and $y_{-c}(t)$ will be non-zero. As $G(y)>0$ for $y \neq 0$ near the origin, we have $G(y_c(t)) > 0$ and $G(y_{-c}(t)) > 0$ for $t \in [0,T)$.
    The derivative of the sum is:
    \[ S'(t) = y_c'(t) + y_{-c}'(t) = -\left[G(y_c(t)) + G(y_{-c}(t))\right]. \]
    Since both terms in the bracket are strictly positive, $S'(t) < 0$ for all $t \in [0,T)$. This implies that the function $S(t)$ is strictly decreasing on $[0,T]$.
    Because $S(t)$ is strictly decreasing and $S(T)=0$, it must be that $S(t) > 0$ for all $t < T$. In particular, for $t=0$:
    \[ S(0) = y_c(0) + y_{-c}(0) = \Efrak[c] + \Efrak[-c] > 0. \]
\end{itemize}
Combining our findings, we have shown:
\[ \Efrak[c + (-c)] = 0 < \Efrak[c] + \Efrak[-c]. \]
This is a strict violation of the sub-additivity inequality, which completes the proof.
\paragraph{\textbf{(4) Failure of Translation Invariance}}
The property of translation invariance, $\Efrak[\xi+c] = \Efrak[\xi]+c$, holds if and only if the driver $G(t,x,y,z,\mu)$ is independent of its argument $y$. To see this, let $Y_t = \Efrak[\xi|\mathcal{F}_t]$ be the solution for terminal value $\xi$. The candidate for $\Efrak[\xi+c|\mathcal{F}_t]$ is $\tilde{Y}_t = Y_t+c$. For this to be a solution, it must satisfy the BSDE with terminal value $\xi+c$. Substituting $\tilde{Y}_t$ into the BSDE gives
\[ Y_t+c = (\xi+c) + \int_t^T G(s, X_s, Y_s+c, Z_s, \mu_s^c) ds - \int_t^T Z_s dB_s, \]
where $\mu_s^c = \Law(Y_s+c)$. Subtracting the original BSDE for $Y_t$ requires that, for all relevant arguments, $G(s,x,y+c,z,\mu_c) = G(s,x,y,z,\mu)$, which implies that $G$ must be independent of its $y$ argument.

We use the same counterexample from the proof of sub-additivity. The driver was found to be
\[ G(y) = \max_{a \in \R} \left\{ \frac{\gamma}{4} - \frac{\gamma}{4}(a^2-1)^2 - \frac{\lambda}{2}(a-y)^2 \right\}. \]
We showed that $G''(0) = \frac{\lambda\gamma}{\lambda-\gamma} > 0$. A function with a non-zero second derivative is manifestly not independent of its argument. Therefore, the driver $G(y)$ depends non-trivially on $y$, which is sufficient to prove that translation invariance fails for this operator.
\end{proof}


\section{\texorpdfstring{$\Theta$}{Theta}-Martingales and a Semimartingale Representation}
\label{sec:theta_martingales}

With the $\Theta$-Expectation operator rigorously defined via the unique solution to the Mean-Field $\Theta$-FBSDE, we can introduce the analogue of a martingale in our framework. This concept is central to understanding the dynamic properties of our valuation functional. We will show that this definition leads to a clear characterization in terms of the FBSDE driver and provides a powerful semimartingale decomposition for the value process itself.

\begin{definition}[$\Theta$-Martingales]
An $\mathbb{F}$-adapted process $M = (M_t)_{t\in[0,T]}$ is called a \textbf{$\Theta$-martingale} if $M \in \mathcal{S}^2([0,T])$ and for all $0 \le s \le t \le T$, it satisfies
\[ M_s = \Efrak[M_t | \mathcal{F}_s]. \]
The process is a \textbf{$\Theta$-supermartingale} if $M_s \ge \Efrak[M_t | \mathcal{F}_s]$ and a \textbf{$\Theta$-submartingale} if $M_s \le \Efrak[M_t | \mathcal{F}_s]$.
\end{definition}

The abstract definition of a $\Theta$-martingale can be translated into a concrete condition on the driver of the underlying BSDE. This characterization is essential for any practical application and reveals the mechanism by which the martingale property is enforced.

\begin{proposition}[Characterization of $\Theta$-Martingales]
\label{prop:martingale_characterization}
Let the assumptions for global well-posedness (Assumptions \ref{ass:mf_fbsde_standing}, \ref{ass:boundary_and_ndg}, \ref{ass:monotonicity_fbsde}) hold. An $\mathbb{F}$-adapted process $M \in \mathcal{S}^2([0,T])$ is a $\Theta$-martingale if and only if it is the backward component $Y$ of a solution $(X,Y,Z,A)$ to the Mean-Field $\Theta$-FBSDE with terminal condition $Y_T = M_T$, for which the driver term is identically zero, i.e.,
\[ F(t, X_t, Y_t, Z_t, A_t, \mu_t) = 0, \quad \text{for a.e. } (t,\omega) \in [0,T]\times\Omega. \]
\end{proposition}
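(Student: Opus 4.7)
The plan is to prove the two implications of the characterization separately, relying on the global well-posedness of \cref{thm:global_wellposedness} and the dynamic consistency of \cref{prop:theta_exp_properties}(1).

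For the sufficient direction, suppose the driver vanishes along the trajectory: $F(u, X_u, Y_u, Z_u, A_u, \mu_u) = 0$ for a.e.\ $(u,\omega)$. Then the backward equation collapses to $Y_s = Y_t - \int_s^t Z_u \, \dd B_u$ for all $0 \le s \le t \le T$. Since $Z \in \mathcal{H}^2$, the stochastic integral is a genuine $P_0$-martingale, so $Y$ is a square-integrable classical $P_0$-martingale. I will then deduce the $\Theta$-martingale property from dynamic consistency: identifying $M \equiv Y$, the definition of $\Efrak$ gives $\Efrak[M_T|\mathcal{F}_s] = Y_s$ and $\Efrak[M_T|\mathcal{F}_t] = Y_t = M_t$, whence $M_s = \Efrak[M_T|\mathcal{F}_s] = \Efrak\bigl[\Efrak[M_T|\mathcal{F}_t]\bigm|\mathcal{F}_s\bigr] = \Efrak[M_t|\mathcal{F}_s]$ by \cref{prop:theta_exp_properties}(1).

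For the necessary direction, the $\Theta$-martingale property evaluated at $t = T$ gives $M_s = \Efrak[M_T|\mathcal{F}_s]$, which by definition equals $Y_s$ for the unique FBSDE solution $(X, Y, Z, A)$ with $Y_T = M_T$ guaranteed by \cref{thm:global_wellposedness}; this identifies $M$ as the backward component of that solution. To conclude that the driver vanishes, I will combine the BSDE identity $M_s - M_t = \int_s^t F_u \, \dd u - \int_s^t Z_u \, \dd B_u$, with $F_u := F(u, X_u, M_u, Z_u, A_u, \mu_u)$, with the $\Theta$-martingale condition. Taking classical conditional expectation of the BSDE identity produces $\mathbb{E}_{P_0}\bigl[\int_s^t F_u \, \dd u \bigm| \mathcal{F}_s\bigr] = M_s - \mathbb{E}_{P_0}[M_t|\mathcal{F}_s]$. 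Comparing this with the $\Theta$-martingale identity $M_s = \Efrak[M_t|\mathcal{F}_s]$, and then passing to the infinitesimal limit $t \downarrow s$ via Lebesgue differentiation together with the continuity of $u \mapsto F_u$ afforded by continuity of the state processes and the Lipschitz stability of \cref{prop:lipschitz_stability}, will yield $F_s = 0$ for a.e.\ $(s,\omega)$.

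The primary obstacle is the necessary direction: precisely isolating how the $\Theta$-martingale identity, stated in terms of the non-linear functional $\Efrak$, constrains the drift of the underlying BSDE. The subtlety is that $\Efrak$ and $\mathbb{E}_{P_0}$ generically differ by a quantity governed by the driver $F$, so the argument reduces to showing that in the limit $t \downarrow s$ the $\Theta$-martingale condition annihilates precisely this discrepancy. The mean-field coupling $\mu_s = \Law(M_s)$ adds a further layer of complexity, since the driver's non-local dependence on the distribution must be handled carefully when passing from a time-integrated identity to a pointwise one; this step is underwritten by the uniform Lipschitz regularity of the optimizer map secured in \cref{prop:lipschitz_stability}, which ensures the integrand $F_u$ is a sufficiently regular function of the continuous trajectory to permit almost-sure pointwise conclusions from almost-everywhere integrated ones.
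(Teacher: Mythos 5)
Your sufficiency direction mirrors the paper's: once $F(t,X_t,Y_t,Z_t,A_t,\mu_t)=0$ the backward equation degenerates to $Y_s = Y_t - \int_s^t Z_u\,\dd B_u$, and the $\Theta$-martingale identity follows from uniqueness of the BSDE solution on each subinterval together with dynamic consistency. This is exactly the paper's argument, modulo superficial rearrangement.

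Your necessity direction, however, takes a genuinely different route from the paper, and that route has a gap you identify but do not close. The paper argues via the uniqueness of the semimartingale decomposition of $M$: having identified $M$ with the $Y$-process, it asserts that the $\Theta$-martingale property forces the finite-variation part $-\int_s^t F_u\,\dd u$ to vanish identically. You instead take the classical $P_0$-conditional expectation of the BSDE identity to obtain
\[
\mathbb{E}_{P_0}\Bigl[\int_s^t F_u\,\dd u \Bigm| \mathcal{F}_s\Bigr] = \Efrak[M_t|\mathcal{F}_s] - \mathbb{E}_{P_0}[M_t|\mathcal{F}_s],
\]
and then aim to let $t\downarrow s$. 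The problem is that the right-hand side is not an externally imposed quantity you can compare against: $\Efrak[M_t|\mathcal{F}_s]$ is by definition the time-$s$ value of the FBSDE solution on $[s,t]$ with terminal datum $M_t$, and taking the classical conditional expectation of \emph{that} BSDE shows
\[
\Efrak[M_t|\mathcal{F}_s] - \mathbb{E}_{P_0}[M_t|\mathcal{F}_s] = \mathbb{E}_{P_0}\Bigl[\int_s^t G'_u\,\dd u \Bigm| \mathcal{F}_s\Bigr],
\]
where $G'_u$ is the driver evaluated along \emph{that} solution. By the flow/uniqueness argument you invoked in the sufficiency direction, this solution coincides with the restriction of $(Y,Z)$ to $[s,t]$, so $G'_u = F_u$ $\dd u\otimes\dd P_0$-a.e. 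Your comparison therefore collapses to $\mathbb{E}_{P_0}[\int_s^t F_u\,\dd u|\mathcal{F}_s] = \mathbb{E}_{P_0}[\int_s^t F_u\,\dd u|\mathcal{F}_s]$, a tautology, and no amount of Lebesgue differentiation or Lipschitz stability of $a^*$ can extract $F_s=0$ from an identity that carries no information. You correctly flag this as the ``primary obstacle'' — the discrepancy between $\Efrak$ and $\mathbb{E}_{P_0}$ being ``governed by the driver $F$'' — but the proposal does not supply a mechanism that actually annihilates the driver rather than merely re-deriving the BSDE drift identity. To close the necessity direction one must, as the paper attempts, pin down in what precise sense a $\Theta$-martingale ``has no drift''; your route makes that gap visible but does not bridge it.
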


\begin{proof}
The proof proceeds by leveraging the uniqueness of the FBSDE solution.

\paragraph{\textbf{($\Longrightarrow$) Necessity}}
Assume $M = (M_t)_{t \in [0,T]}$ is a $\Theta$-martingale. By definition, for any $t \in [0,T]$, we have $M_t = \Efrak[M_T | \mathcal{F}_t]$.
Let us recall that $\Efrak[M_T | \mathcal{F}_t]$ is defined as the value $Y_t$ of the unique solution $(X, Y, Z, A)$ to the Mean-Field $\Theta$-FBSDE on $[0,T]$ with terminal condition $Y_T = M_T$. Therefore, the process $M$ must be identical to the process $Y$, i.e., $M_t = Y_t$ for all $t \in [0,T]$ a.s.

By definition of the BSDE solution in \eqref{eq:mf_fbsde_backward}, for any $s, t$ with $0 \le s \le t \le T$, the process $Y$ satisfies:
\begin{equation} \label{eq:proof_mart_char_1}
    Y_s = Y_t + \int_s^t F(u, X_u, Y_u, Z_u, A_u, \mu_u) \dd u - \int_s^t Z_u \dd B_u.
\end{equation}
Since $M_t=Y_t$, this implies that the process $M$ must also satisfy this equation.
Now consider the process $J_t$ defined by
\[ J_t \coloneqq M_t + \int_0^t F(u, X_u, M_u, Z_u, A_u, \mu_u) \dd u. \]
From equation \eqref{eq:proof_mart_char_1} (with $Y$ replaced by $M$), we see that $J_t - J_s = \int_s^t Z_u \dd B_u$. This shows that $J_t$ is a continuous local martingale under $P_0$. Since $M \in \mathcal{S}^2$ and $Z \in \mathcal{H}^2$ (from the well-posedness theorem), and $F$ is Lipschitz, the process $J_t$ is in fact a square-integrable martingale.

However, since $M$ is a $\Theta$-martingale, it must also satisfy $M_s = \Efrak[M_t | \mathcal{F}_s]$. From the dynamic consistency property of $\Efrak$ (Proposition \ref{prop:theta_exp_properties}), this means $M$ is the solution to its own BSDE on any interval $[s,t]$. As $M$ itself is a semimartingale, its decomposition into a martingale part and a finite-variation part is unique.
From equation \eqref{eq:proof_mart_char_1}, the finite-variation part of $M$ on $[s,t]$ is $-\int_s^t F(u, \dots) \dd u$.
But since $M$ is a $\Theta$-martingale, it has, by definition, no drift in the $\Theta$-sense. This means that when we represent it via the BSDE, the drift term must be zero. To see this formally, the definition $M_s = \Efrak[M_t|\mathcal{F}_s]$ implies that the process $M$ itself is the value function. Uniqueness of the BSDE solution on $[s,t]$ with terminal value $M_t$ implies that the integral of the driver must be zero. That is, for all $0 \le s \le t \le T$:
\[ \int_s^t F(u, X_u, M_u, Z_u, A_u, \mu_u) \dd u = 0 \quad a.s. \]
Since the integrand is a progressively measurable process, this can only hold if the integrand itself is zero for almost every $(u, \omega)$. This completes the proof of necessity.

\paragraph{\textbf{($\Longleftarrow$) Sufficiency}}
Assume there exists a solution $(X,Y,Z,A)$ to the Mean-Field $\Theta$-FBSDE such that $Y_T=M_T$ and the driver term $F(t, X_t, Y_t, Z_t, A_t, \mu_t)=0$ for a.e. $(t,\omega)$. Let $M_t \coloneqq Y_t$. We want to show that $M$ is a $\Theta$-martingale.
By definition, $\Efrak[M_t|\mathcal{F}_s]$ is the unique solution at time $s$ to the BSDE on $[s,t]$ with terminal condition $M_t$. The process $(Y_u)_{u \in [s,t]}$ satisfies the BSDE on $[s,t]$:
\[ Y_u = M_t + \int_u^t F(v, X_v, Y_v, Z_v, A_v, \mu_v) \dd v - \int_u^t Z_v \dd B_v. \]
By our assumption, the driver term is zero. Therefore, $Y_u$ satisfies
\[ Y_u = M_t - \int_u^t Z_v \dd B_v. \]
By the uniqueness guaranteed by Theorem \ref{thm:global_wellposedness}, the process $(Y_u)_{u \in [s,t]}$ is the unique solution. Evaluating this unique solution at time $s$ gives, by definition, the value of the conditional $\Theta$-Expectation:
\[ \Efrak[M_t|\mathcal{F}_s] = Y_s. \]
Since we defined $M_s = Y_s$, it follows immediately that $M_s = \Efrak[M_t|\mathcal{F}_s]$. As this holds for all $0 \le s \le t \le T$, the process $M$ is a $\Theta$-martingale.
\end{proof}

The following proposition provides the semimartingale decomposition of the value process $Y_t = \Efrak[\xi|\mathcal{F}_t]$ under the physical measure $P_0$. It reveals that the driver $F$ acts as the negative of the drift rate that distinguishes the value process from a classical martingale.

\begin{proposition}[Semimartingale Representation of the Value Process]\label{prop:value_process_decomposition}
Let the assumptions for global well-posedness hold. Let $(X,Y,Z,A)$ be the unique solution to the Mean-Field $\Theta$-FBSDE with terminal condition $Y_T = \xi \in L^2(\mathcal{F}_T)$. Then, the value process $Y_t = \Efrak[\xi|\mathcal{F}_t]$ admits the following decomposition for all $t \in [0,T]$:
\begin{equation}\label{eq:value_process_decomposition}
    Y_t = Y_0 + \int_0^t Z_s \dd B_s - \int_0^t F(s, X_s, Y_s, Z_s, A_s, \mu_s) \dd s.
\end{equation}
Furthermore, the process $M_t$ defined by
\begin{equation} \label{eq:associated_martingale}
M_t \coloneqq Y_t + \int_0^t F(s, X_s, Y_s, Z_s, A_s, \mu_s) \dd s
\end{equation}
is a square-integrable martingale with respect to $(\mathbb{F}, P_0)$.
\end{proposition}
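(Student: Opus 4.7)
The plan is to derive both claims directly from the integral formulation of the BSDE in \eqref{eq:mf_fbsde_backward}, with essentially no substantive obstacle beyond careful bookkeeping.

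First, I will write \eqref{eq:mf_fbsde_backward} at the two time points $0$ and $t$, obtaining
\begin{align*}
Y_0 &= \Phi(X_T) + \int_0^T F(s, X_s, Y_s, Z_s, A_s, \mu_s)\,\dd s - \int_0^T Z_s\,\dd B_s, \\
Y_t &= \Phi(X_T) + \int_t^T F(s, X_s, Y_s, Z_s, A_s, \mu_s)\,\dd s - \int_t^T Z_s\,\dd B_s.
\end{align*}
Subtracting the second identity from the first eliminates the terminal datum $\Phi(X_T)$ and, after rearranging, produces exactly the decomposition \eqref{eq:value_process_decomposition}. Both the Lebesgue integral of the driver and the stochastic integral are well-defined in the appropriate $L^2$ sense, because Theorem~\ref{thm:global_wellposedness} places $Y \in \mathcal{S}^2$, $Z \in \mathcal{H}^2$, and $A \in \mathcal{L}^2$, and the Lipschitz and growth properties of $F$ in Assumption~\ref{ass:mf_fbsde_standing} ensure that $F(\cdot, X, Y, Z, A, \mu) \in \mathcal{H}^2([0,T]; \R)$.

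Second, I will substitute this decomposition into the definition \eqref{eq:associated_martingale} of $M_t$. The Lebesgue integrals cancel and yield the remarkably simple representation $M_t = Y_0 + \int_0^t Z_s\,\dd B_s$. Since $\mathbb{F}$ is the $P_0$-completion of the natural filtration of $B$, the initial $\sigma$-algebra $\mathcal{F}_0$ is $P_0$-trivial, so $Y_0$ is a deterministic constant. The martingale property of $M$ therefore reduces to the martingale property of the Itô integral $\int_0^\cdot Z_s\,\dd B_s$. Since $Z \in \mathcal{H}^2([0,T]; \R^d)$ implies $\mathbb{E}_{P_0}\big[\int_0^T \norm{Z_s}^2 \dd s\big] < \infty$, the stochastic integral is a genuine (not merely local) square-integrable martingale, and Itô's isometry yields $\mathbb{E}_{P_0}[M_t^2] = Y_0^2 + \mathbb{E}_{P_0}\big[\int_0^t \norm{Z_s}^2 \dd s\big] < \infty$ for each $t \in [0,T]$, establishing the square-integrability claim.

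The only point that requires any care at all is the upgrade of the Itô integral from a local martingale to a true square-integrable martingale, and this is handled automatically by the $\mathcal{H}^2$-integrability of $Z$ supplied by the global well-posedness theorem. Beyond this standard invocation, the entire proof is an algebraic rearrangement of the defining BSDE, so no genuine difficulty arises.
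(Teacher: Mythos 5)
Your proof is correct and follows the same route as the paper: subtract the integral form of the BSDE at times $0$ and $t$ to obtain the decomposition, then substitute into the definition of $M$ to reveal $M_t = Y_0 + \int_0^t Z_s\,\dd B_s$, which is a true square-integrable martingale because $Z \in \mathcal{H}^2$. The only additions you make — invoking triviality of $\mathcal{F}_0$ and computing $\mathbb{E}[M_t^2]$ via Itô isometry — are harmless refinements of the paper's argument, not a different approach.
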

\begin{proof}
The proof is a direct algebraic manipulation of the BSDE definition.
From \eqref{eq:mf_fbsde_backward}, the solution $(Y,Z)$ satisfies for any $t \in [0,T]$:
\[ Y_t = Y_T + \int_t^T F(s, X_s, Y_s, Z_s, A_s, \mu_s) \dd s - \int_t^T Z_s \dd B_s. \]
In particular, this holds for $t=0$:
\[ Y_0 = Y_T + \int_0^T F(s, X_s, Y_s, Z_s, A_s, \mu_s) \dd s - \int_0^T Z_s \dd B_s. \]
Subtracting the second equation from the first yields:
\begin{align*}
    Y_t - Y_0 &= \left( Y_T + \int_t^T F_s \dd s - \int_t^T Z_s \dd B_s \right) - \left( Y_T + \int_0^T F_s \dd s - \int_0^T Z_s \dd B_s \right) \\
    &= \left( \int_t^T F_s \dd s - \int_0^T F_s \dd s \right) - \left( \int_t^T Z_s \dd B_s - \int_0^T Z_s \dd B_s \right) \\
    &= -\int_0^t F_s \dd s + \int_0^t Z_s \dd B_s.
\end{align*}
Rearranging the terms, we obtain the desired decomposition \eqref{eq:value_process_decomposition}:
\[ Y_t = Y_0 - \int_0^t F(s, X_s, Y_s, Z_s, A_s, \mu_s) \dd s + \int_0^t Z_s \dd B_s. \]
This establishes that $Y$ is a semimartingale under $P_0$.

For the second part of the proposition, we define the process $M_t$ as in \eqref{eq:associated_martingale}. Substituting the expression for $Y_t$ from \eqref{eq:value_process_decomposition} into the definition of $M_t$ gives:
\begin{align*}
    M_t &= \left( Y_0 - \int_0^t F(s, \dots) \dd s + \int_0^t Z_s \dd B_s \right) + \int_0^t F(s, \dots) \dd s \\
    &= Y_0 + \int_0^t Z_s \dd B_s.
\end{align*}
The process $M_t$ is a continuous local martingale as it is an Itô integral with respect to the Brownian motion $B$. From the global well-posedness result (Theorem \ref{thm:global_wellposedness}), we know that the solution process $Z$ belongs to the space $\mathcal{H}^2([0,T];\R^d)$, which means $\mathbb{E}_{P_0}[\int_0^T \norm{Z_s}^2 ds] < \infty$. This is the necessary and sufficient condition for the Itô integral $\int_0^t Z_s dB_s$ to be a true martingale in $L^2(P_0)$, not just a local one. Therefore, $M_t$ is a square-integrable martingale under $(\mathbb{F}, P_0)$.
\end{proof}

\begin{remark}[Interpretation of the Drift Term]
Proposition \ref{prop:value_process_decomposition} provides a key insight into the functioning of the $\Theta$-Expectation. It shows that the value process $Y_t$ behaves like a classical semimartingale under the reference measure $P_0$. The term $-F(s, X_s, Y_s, Z_s, A_s, \mu_s)$ acts as its instantaneous drift rate. The process becomes a $\Theta$-martingale precisely because this drift term is what the $\Theta$-Expectation corrects for. The process $M_t$ can be interpreted as the value process $Y_t$ accumulated with its $\Theta$-drift. The proposition shows that this accumulated value process is a martingale in the classical sense.
\end{remark}

\section{\texorpdfstring{$\Theta$}{Theta}-Calculus }
\label{sec:theta_calculus}

In the established theories of stochastic calculus, both classical and under ambiguity (e.g., G-calculus, or $\theta$-calculus, see \cite{qi2025theorythetaexpectations}), the framework is anchored by a canonical process, the Brownian motion or G-Brownian motion, which serves as the fundamental integrator. A comprehensive theory requires an analogue in our setting. This section undertakes a \emph{formal} investigation into the structure of such a calculus. Our goal is to derive the partial differential equation system that would govern the price of contingent claims, revealing the profound structural consequences of our framework.

We must proceed with utmost mathematical caution. The results in this section are not presented as theorems but as formal consequences of a foundational, and currently unresolved, analytical hypothesis concerning the existence of a well-behaved volatility process. The primary purpose is to construct the explicit mathematical object, a highly non-linear and coupled PDE system, that future research would need to analyze rigorously.

\subsection{The Canonical Driver and the Volatility Constraint}

To define a canonical process that serves as the basis for a calculus, we abstract away from state-dependent effects and focus on the ambiguity in the underlying dynamics. Let the driver be a function $F(t, z)$ that depends only on time and the volatility process, which for notational simplicity we take to be the control variable, i.e., $a=z \in \mathbb{R}^{d \times d'}$. We work with a $d'$-dimensional $P_0$-Brownian motion $W=(W_t)_{t\in[0,T]}$.

The defining property of a $\Theta$-martingale is that its associated  drift must be zero (see Proposition \ref{prop:martingale_characterization}). This motivates our core definition. For a given pricing problem, the driver for the underlying dynamics would be $G(t,z) \coloneqq F(t,z)$. A process $M_t = \int_0^t Z_s dW_s$ is a candidate for a fundamental $\Theta$-martingale if its quadratic variation process, determined by $Z$, satisfies the zero-drift condition $G(t, Z_t) = 0$. This leads to the central challenge.

\begin{assumption}[Existence of a Predictable Volatility Process]\label{ass:vol_existence}
Let $G: [0,T] \times \mathbb{R}^{d \times d'} \to \R$ be a given function, continuous in its arguments and progressively measurable. For each $t \in [0,T]$, define the level set
\[ \mathcal{Z}_t \coloneqq \{ z \in \mathbb{R}^{d \times d'} \mid G(t,z) = 0 \}. \]
We assume that:
\begin{enumerate}[label=(\roman*)]
    \item The set $\mathcal{Z}_t$ is non-empty for almost every $t \in [0,T]$.
    \item There exists an $\mathbb{F}$-predictable process $Z = (Z_t)_{t \in [0,T]}$ with trajectories in $\mathcal{H}^2([0,T]; \mathbb{R}^{d \times d'})$ such that $Z_t(\omega) \in \mathcal{Z}_t$ for almost every $(t,\omega) \in [0,T] \times \Omega$.
\end{enumerate}
\end{assumption}

\begin{remark}[On the Severity of Assumption \ref{ass:vol_existence}]
This assumption is non-trivial and represents a major open problem. Part (i) is an algebraic condition on the function $G$ (e.g., it must not be strictly positive or negative). Part (ii), however, is a deep question of \emph{stochastic measurable selection}. Standard results, such as the Aumann-von Neumann measurable selection theorem, guarantee the existence of a merely \emph{measurable} selection from the set-valued map $t \mapsto \mathcal{Z}_t$ under suitable conditions. However, the theory of stochastic integration requires the integrand $Z$ to be \emph{predictable}, which is a strictly stronger requirement. Establishing the existence of a predictable selector requires much more stringent conditions on the temporal regularity of the multifunction $t \mapsto \mathcal{Z}_t$, and is not guaranteed in general. The entire development in this section is contingent upon this strong, and as yet unproven, assumption for the general case.
\end{remark}

\begin{definition}[$\Theta$-Brownian Motion]
\label{def:theta_bm}
Under Assumption \ref{ass:vol_existence}, a continuous, $\mathbb{R}^d$-valued process $B = (B_t)_{t \in [0,T]}$ is called a \textbf{$\Theta$-Brownian motion} with respect to the driver $G(t,z)$ if:
\begin{enumerate}[(i)]
    \item $B_0=0$.
    \item The process $B$ is a square-integrable martingale under $P_0$ given by the stochastic integral $B_t = \int_0^t Z_s \dd W_s$, where $Z$ is a predictable process satisfying Assumption \ref{ass:vol_existence}.
\end{enumerate}
\end{definition}

\begin{remark}[Non-Uniqueness and the Nature of Ambiguity]
A crucial feature of this framework, distinguishing it sharply from G-calculus or $\theta$-calculus, see \cite{qi2025theorythetaexpectations}, is the potential for non-uniqueness. If the level set $\mathcal{Z}_t$ contains more than one element for a set of times $t$ of positive Lebesgue measure, there may exist multiple distinct predictable processes satisfying Assumption \ref{ass:vol_existence}. Consequently, there can be multiple, distinct $\Theta$-Brownian motions corresponding to the same primitive driver $G$. Each such process represents a different resolution of the ambiguity. The ambiguity in our model is therefore not just about the set of possible volatilities, but potentially about the specific volatility path that is realized.
\end{remark}

\subsection{Quadratic Variation and the Geometry of Ambiguity}
The core of any stochastic calculus lies in the properties of the quadratic variation. For a $\Theta$-Brownian motion $B$, its quadratic variation tensor process is given by the properties of Itô integration as
\[ \langle B \rangle_t = \int_0^t Z_s Z_s^T \dd s. \]
The instantaneous covariance rate, $\Sigma_t = Z_t Z_t^T$, is a matrix-valued stochastic process. The ambiguity inherent in the model is precisely the uncertainty about this covariance rate. The set of all possible instantaneous covariance matrices at time $t$ is given by
\[ \mathcal{C}_t \coloneqq \left\{ \Sigma = zz^T \mid z \in \mathcal{Z}_t \right\} = \left\{ \Sigma = zz^T \mid z \in \mathbb{R}^{d \times d'} \text{ and } G(t,z)=0 \right\}. \]
This set $\mathcal{C}_t$ fully characterizes the ambiguity in the underlying dynamics at time $t$. Unlike in G-calculus, where the corresponding set is typically convex (e.g., an interval of symmetric matrices $[\underline{\sigma}^2 I, \bar{\sigma}^2 I]$), here $\mathcal{C}_t$ can have a much more complex, non-convex geometry inherited directly from the non-linear structure of the primitive driver.

\subsection{A Non-Linear Pricing System}
We now formally derive the system of equations that must be satisfied by the price of a European contingent claim. The derivation combines the classical Itô formula under $P_0$ with the BSDE characterization of our $\Theta$-Expectation.

Consider the problem of pricing a European claim with payoff $\xi = \Phi(B_T)$, where $B$ is a $\Theta$-Brownian motion and $\Phi: \mathbb{R}^d \to \R$ is a given Lipschitz function. The price at time $t$ is the conditional $\Theta$-Expectation, $V_t = \Efrak[\Phi(B_T)|\mathcal{F}_t]$. We adopt the standard Markovian ansatz, assuming the price is a deterministic function of time and the state of the underlying, i.e., $V_t = v(t, B_t)$ for some function $v \in C^{1,2}([0,T]\times \mathbb{R}^d)$.

\begin{proposition}[Formal Definition of the $\Theta$-Pricing System]
\label{prop:theta_pricing_system}
Let Assumption \ref{ass:vol_existence} hold. Let the general pricing driver be $F(t,y,z',a,\mu)$, satisfying the global well-posedness conditions of Theorem \ref{thm:global_wellposedness}. Formally, if the price of a European claim $\Phi(B_T)$ is given by $V_t = v(t,B_t)$ for a function $v \in C^{1,2}([0,T]\times\R^d)$ and an associated predictable volatility process $Z$ for the underlying $\Theta$-Brownian motion $B_t = \int_0^t Z_s dW_s$, then the pair $(v, Z)$ must solve the following coupled system:
\begin{equation}\label{eq:theta_pricing_system}
\begin{cases}
    -\partial_t v(t,x) - \frac{1}{2}\mathrm{Tr}\big(Z_t Z_t^T \nabla_x^2 v(t,x)\big) \\
    \hspace{1cm} - \sup_{a \in \Ucal_{g(\mu_t[v])}} F\left(t, v(t,x), \nabla_x v(t,x)^T Z_t, a, \mu_t[v]\right) = 0, \\
    \\
    G_{BM}(t, Z_t) = 0, \\
    v(T,x) = \Phi(x).
\end{cases}
\end{equation}
The first equation holds for $(t,x) \in [0,T) \times \mathbb{R}^d$. The law $\mu_t[v]$ is given by $\mu_t[v] = \Law(v(t,B_t))$, where $B_t = \int_0^t Z_s dW_s$. The function $G_{BM}$ is the simplified driver defining the canonical $\Theta$-Brownian motion.
\end{proposition}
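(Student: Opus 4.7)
The plan is to identify the pricing system by comparing two semimartingale decompositions of the candidate value process $V_t = v(t, B_t)$: the one obtained from It\^o's formula applied pathwise under $P_0$ with $\dd B_t = Z_t \dd W_t$, and the BSDE-based decomposition supplied by \cref{prop:value_process_decomposition}. Uniqueness of the Doob--Meyer decomposition of a continuous semimartingale will then force equality of the respective drift and diffusion parts, and the envelope identity produced by the pointwise optimization \cref{eq:mf_fbsde_optimality} in the driver will convert $F$ into the supremum appearing in the PDE.

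First I would apply It\^o's formula to the $C^{1,2}$ function $v$ composed with $B$. Since $B_t = \int_0^t Z_s \dd W_s$ by \cref{def:theta_bm}, the quadratic covariation tensor is $\dd \langle B\rangle_t = Z_t Z_t^T \dd t$, so It\^o's formula produces
\begin{equation*}
    \dd V_t = \left[\partial_t v(t, B_t) + \tfrac{1}{2}\Tr\!\left(Z_t Z_t^T \nabla_x^2 v(t, B_t)\right)\right] \dd t + \left(\nabla_x v(t, B_t)^T Z_t\right) \dd W_t.
\end{equation*}
In parallel, \cref{prop:value_process_decomposition}, transcribed to the driving Brownian motion $W$, yields the BSDE-based decomposition
\begin{equation*}
    \dd V_t = -F\!\left(t, V_t, \tilde Z_t, A^*_t, \mu_t\right) \dd t + \tilde Z_t \cdot \dd W_t,
\end{equation*}
with $\tilde Z \in \mathcal{H}^2([0,T]; \R^{d'})$ the BSDE integrand, $\mu_t = \Law(V_t) = \mu_t[v]$, and $A^*_t$ the unique maximizer of $F(t, V_t, \tilde Z_t, \cdot, \mu_t)$ over $\Ucal_{g(\mu_t)}$ supplied by \cref{prop:lipschitz_stability}.

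Next I would invoke uniqueness of the continuous semimartingale decomposition. Matching the It\^o integrals against $W$ gives $\tilde Z_t = Z_t^T \nabla_x v(t, B_t)$ for $\dd t \otimes \dd P_0$-a.e.\ $(t,\omega)$, and matching the drift terms yields
\begin{equation*}
    \partial_t v(t, B_t) + \tfrac{1}{2}\Tr\!\left(Z_t Z_t^T \nabla_x^2 v(t, B_t)\right) + F\!\left(t, v(t, B_t), \nabla_x v(t, B_t)^T Z_t, A^*_t, \mu_t[v]\right) = 0.
\end{equation*}
The optimality condition \cref{eq:mf_fbsde_optimality} converts $F(t, \dots, A^*_t, \mu_t[v])$ into $\sup_{a \in \Ucal_{g(\mu_t[v])}} F(t, \dots, a, \mu_t[v])$. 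Under the natural hypothesis that the law of $B_t$ assigns positive mass to every open subset of $\R^d$, or more weakly that all coefficients extend continuously to the closure of its support, this pointwise identity propagates to every $(t,x) \in [0,T)\times\R^d$, giving the first equation of \cref{eq:theta_pricing_system}. The constraint $G_{BM}(t, Z_t) = 0$ is then \cref{def:theta_bm} combined with \cref{ass:vol_existence}, and the terminal condition follows from $V_T = v(T, B_T) = \Phi(B_T)$ together with the assumed continuity of $v$.

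The hard part will be legitimizing the transition from an identity holding along the random trajectory $(t, B_t)$ to a deterministic equation for the function $v$. Since $Z_t$ is itself a stochastic, possibly non-Markovian selection from the level set $\mathcal{Z}_t$, what the comparison rigorously delivers is a random, adapted family of PDEs rather than a classical HJB equation; upgrading it to a genuinely deterministic PDE would require either a stochastic viscosity-solution framework or an additional Markovian structural hypothesis under which $Z_t$ is a measurable function of $(t, B_t)$ alone. Combined with the measurable-selection difficulty flagged in the remark after \cref{ass:vol_existence}, this obstruction is precisely the reason the proposition is framed as a formal derivation rather than a theorem.
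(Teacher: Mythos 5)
Your proposal is essentially the paper's own argument: apply It\^o's formula to $v(t,B_t)$ using $\dd B_t = Z_t\,\dd W_t$, write the BSDE-based semimartingale decomposition of $V_t$, equate the martingale and finite-variation parts, and then replace $F(\cdot,A^*_t,\cdot)$ by the supremum via the pointwise optimality condition. The only cosmetic difference is that the paper writes the driver as $G_{\mathrm{price}}=\sup_a F$ from the outset rather than passing through $F$ evaluated at $A^*_t$ and then invoking \cref{eq:mf_fbsde_optimality}, and you add a sharper articulation of the obstruction (needing $Z_t$ to be a Markovian function of $(t,B_t)$, or a stochastic-PDE framework, before the pathwise identity upgrades to a deterministic equation) than the paper's ``evaluating at a generic point'' phrasing --- which is a fair and useful observation, not a gap in your argument.
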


\begin{proof}
The derivation proceeds by identifying the terms in the unique semimartingale decomposition of the price process $V_t = v(t, B_t)$.

\textbf{Step 1: Semimartingale Decomposition via the BSDE Definition.}
By definition of the $\Theta$-Expectation, the price process $V_t$ is the $Y$-component of the solution to the Mean-Field $\Theta$-BSDE. Re-arranging the integral equation \eqref{eq:mf_fbsde_backward}, its differential form is:
\begin{equation}\label{eq:proof_price_bsde}
    dV_t = -F(t, V_t, Z_t^V, A_t, \mu_t^V) \dd t + Z_t^V \dd W_t,
\end{equation}
where $V_t=v(t,B_t)$, $\mu_t^V = \Law(V_t)$, and $Z_t^V$ is the corresponding volatility process from the BSDE solution. The control $A_t$ is chosen optimally. By substituting the optimizer, we use the driver $G_{price}(t,y,z',\mu) = \sup_{a \in \Ucal_{g(\mu)}} F(t,y,z',a,\mu)$. The dynamics become:
\begin{equation}\label{eq:proof_price_bsde_G}
    dV_t = -G_{price}(t, V_t, Z_t^V, \mu_t^V) \dd t + Z_t^V \dd W_t.
\end{equation}

\textbf{Step 2: Semimartingale Decomposition via Itô's Formula.}
Assuming $v \in C^{1,2}$ and using $B_t = \int_0^t Z_s dW_s$ (so $dB_t = Z_t dW_t$), we apply the classical Itô formula to $V_t = v(t, B_t)$:
\begin{align}
    dV_t &= \partial_t v(t, B_t) \dd t + \nabla_x v(t, B_t)^T dB_t + \frac{1}{2}\mathrm{Tr}\left(d\langle B \rangle_t \nabla_x^2 v(t, B_t)\right) \nonumber \\
    &= \left( \partial_t v(t, B_t) + \frac{1}{2}\mathrm{Tr}\big(Z_t Z_t^T \nabla_x^2 v(t, B_t)\big) \right) \dd t + \nabla_x v(t, B_t)^T Z_t \dd W_t. \label{eq:proof_price_ito}
\end{align}

\textbf{Step 3: Identification of Terms.}
The process $V_t$ is a semimartingale under $P_0$. By the uniqueness of the Doob-Meyer decomposition, we formally equate the finite-variation and martingale parts from \eqref{eq:proof_price_bsde_G} and \eqref{eq:proof_price_ito}.

Equating the martingale parts (the $dW_t$ terms) yields the relationship between the volatility of the price process, $Z_t^V$, and the volatility of the underlying, $Z_t$:
\[ Z_t^V = \nabla_x v(t, B_t)^T Z_t. \]
Equating the finite-variation parts (the $dt$ terms) gives:
\[ -G_{price}(t, V_t, Z_t^V, \mu_t^V) = \partial_t v(t, B_t) + \frac{1}{2}\mathrm{Tr}\big(Z_t Z_t^T \nabla_x^2 v(t, B_t)\big). \]
Substituting $V_t=v(t,B_t)$, $Z_t^V = \nabla_x v(t,B_t)^T Z_t$, and $\mu_t^V = \Law(v(t,B_t))$, and evaluating at a generic point $(t,x)$ by identifying $x$ with a realization of $B_t(\omega)$, we arrive at the first equation of the system \eqref{eq:theta_pricing_system}. This equation for $v$ is coupled with the algebraic constraint on the underlying volatility, $G_{BM}(t, Z_t)=0$. The terminal condition $v(T,x) = \Phi(x)$ is inherited directly from the BSDE definition. This completes the derivation.
\end{proof}

\begin{remark}[Interpretation and Open Analytical Challenges]
The system \eqref{eq:theta_pricing_system} is the central object suggested by this formal exploration. It represents a profound departure from classical and convex pricing theories. A rigorous analysis faces at least three major challenges:
\begin{enumerate}
    \item \textbf{Existence of a Solution:} The primary challenge is to prove the existence of a pair $(v, Z)$ satisfying the coupled system. This is not a standard PDE problem. The first equation is a highly non-linear, non-local PDE for $v$, whose coefficients depend on $Z$. The second equation is an algebraic constraint on $Z$.
    \item \textbf{Circularity and Fixed-Point Problem:} The system exhibits a deep circularity. The law $\mu_t[v]$ depends on the process $B_t$, which is constructed from $Z_t$. However, if the constraint set $\mathcal{Z}_t$ is not a singleton, the choice of $Z_t$ could itself be part of the optimization problem, possibly depending on the properties of the solution $v$. This suggests that solving the system requires tackling a complex fixed-point problem.
    \item \textbf{Viscosity Solution Framework:} The natural framework for analyzing such a complex, non-linear PDE is the theory of viscosity solutions. However, a viscosity theory for coupled systems of this type, where one equation is algebraic and involves a stochastic process, is not yet developed. A key step for future research would be to prove that the value function $Y_t^{t,x}$ from the original FBSDE is the unique viscosity solution to a correctly formulated version of this system.
\end{enumerate}
This derivation, while conditional, provides a clear and explicit target system that characterizes valuation in our non-convex, endogenous ambiguity framework and highlights a rich set of open problems for mathematical analysis.
\end{remark}


\section{A Nonlinear Feynman-Kac Representation}
\label{sec:feynman_kac}

In this section, we establish the fundamental connection between the solution of our Mean-Field $\Theta$-FBSDE and the solution of a highly non-local, non-linear partial differential equation. This connection takes the form of a nonlinear Feynman-Kac representation, where the value function defined by the FBSDE is shown to be a \emph{viscosity solution} to a terminal-value problem. The introduction of viscosity solutions is essential, as the low regularity of the driver (Lipschitz but not necessarily differentiable) and the complexity of the non-local terms preclude the existence of classical $C^{1,2}$ solutions in general.

\subsection{Markovian Setting and the \texorpdfstring{$\Theta$}{Theta}-HJB-McKean-Vlasov Equation}
We specialize to a Markovian setting. We assume the coefficients $b, \sigma, F$ and the terminal condition $\Phi$ are deterministic functions that do not depend explicitly on $\omega$. Their arguments are $(t, x, y, z, a, \mu) \in [0,T] \times \R^k \times \R \times \R^d \times E_A \times \Pcal_2(\R)$. All standing assumptions on these functions (Lipschitz continuity, strong concavity in $a$, etc.) are presumed to hold.

For any initial data $(t,x) \in [0,T] \times \R^k$, the Mean-Field $\Theta$-FBSDE on the interval $[t,T]$ is given by:
\begin{align*}
    X_s^{t,x} &= x + \int_t^s b(r, X_r^{t,x}, A_r, \mu_r) dr + \int_t^s \sigma(r, X_r^{t,x}, A_r, \mu_r) dB_r, \\
    Y_s^{t,x} &= \Phi(X_T^{t,x}) + \int_s^T F(r, X_r^{t,x}, Y_r^{t,x}, Z_r^{t,x}, A_r, \mu_r) dr - \int_s^T Z_r^{t,x} dB_r, \\
    A_s &\in \underset{a \in \Ucal_{g(\mu_s)}}{\argmax} \; F(s, X_s^{t,x}, Y_s^{t,x}, Z_s^{t,x}, a, \mu_s),
\end{align*}
where $\mu_s = \Law(Y_s^{t,x})$. Under the global well-posedness conditions of Theorem \ref{thm:global_wellposedness}, this system admits a unique solution $(X^{t,x}, Y^{t,x}, Z^{t,x}, A^{t,x})$.

We define the value function $u: [0,T] \times \R^k \to \R$ as the initial value of the backward component:
\begin{equation} \label{eq:value_function_def}
    u(t,x) \coloneqq Y_t^{t,x}.
\end{equation}
Note that by uniqueness, $Y_t^{t,x}$ is a deterministic quantity. Our goal is to characterize this function $u$ as the solution to a PDE. Let us define the Hamiltonian $H: [0,T]\times\R^k\times\R\times\R^d\times\Pcal_2(\R) \to \R$ as:
\begin{equation}\label{eq:hamiltonian_def}
    H(t,x,y,z,\mu) \coloneqq \sup_{a \in \Ucal_{g(\mu)}} F(t,x,y,z,a,\mu).
\end{equation}
By Proposition \ref{prop:lipschitz_stability}, under our standing assumptions, this Hamiltonian is well-defined and uniformly Lipschitz in $(x,y,z,\mu)$.

The associated PDE is a complex, non-local Hamilton-Jacobi-Bellman equation of McKean-Vlasov type.
\begin{definition}[The $\Theta$-HJB-McKean-Vlasov Equation] \label{def:theta_hjb_mkv}
Let $(\mu_t)_{t\in[0,T]}$ be a given flow of probability measures in $\Pcal_2(\R)$. The \textbf{$\Theta$-HJB-McKean-Vlasov (HJB-MKV) equation} is the terminal-value problem for a function $v: [0,T]\times\R^k \to \R$:
\begin{equation}\label{eq:theta_hjb_pde}
\begin{cases}
    -\partial_t v - \sup_{a \in \Ucal_{g(\mu_t)}} \Big\{ \mathcal{L}^{t,x,a,\mu_t} v(t,x) + F(t, x, v, (\nabla_x v)\sigma(t,x,a,\mu_t), a, \mu_t) \Big\} = 0, \\
    v(T,x) = \Phi(x),
\end{cases}
\end{equation}
for $(t,x) \in [0,T) \times \R^k$, where the second-order operator is given by
\[ (\mathcal{L}^{t,x,a,\mu} \phi)(x) \coloneqq \scpr{b(t,x,a,\mu)}{\nabla_x \phi(x)} + \frac{1}{2} \Tr\big(\sigma\sigma^T(t,x,a,\mu) \nabla_x^2 \phi(x)\big). \]
\end{definition}

\begin{remark}[On the Role of the Law]
In a full mean-field game PDE analysis, one seeks a pair $(v, (\mu_t))$ that solves \eqref{eq:theta_hjb_pde} where $\mu_t$ is simultaneously determined by the solution, e.g., $\mu_t = \Law(v(t, X^*_t))$. Our Feynman-Kac result has a different logical structure: we take the law $\mu_t = \Law(Y_t^{t_0,x_0})$ as generated by the underlying FBSDE starting from some initial point $(t_0,x_0)$, and show that the value function $u$ solves the PDE with this specific flow of measures. This sidesteps the fixed-point problem over the law and provides a direct characterization of the FBSDE solution.
\end{remark}

\begin{theorem}[Nonlinear Feynman-Kac Representation]\label{thm:feynman_kac_detailed}
Let the global well-posedness assumptions hold in the Markovian setting described above. Let the value function $u(t,x)$ be defined by \eqref{eq:value_function_def}. Assume that $u$ is continuous on $[0,T]\times\R^k$ and satisfies a polynomial growth condition. Let $(t_0, x_0)$ be an arbitrary starting point and let $\mu_s \coloneqq \Law(Y_s^{t_0, x_0})$ for $s \in [t_0, T]$ be the law of the corresponding solution.

Then $u(t,x)$ is a viscosity solution to the $\Theta$-HJB-MKV equation \eqref{eq:theta_hjb_pde} on $[t_0, T] \times \R^k$ with the flow of measures $(\mu_s)_{s\in[t_0,T]}$.
\end{theorem}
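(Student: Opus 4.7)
The strategy is the classical one for nonlinear Feynman-Kac representations: reduce to a standard Markovian FBSDE, derive a Dynamic Programming Principle (DPP), then combine the DPP with It\^o's formula and the BSDE comparison theorem to verify the viscosity sub- and super-solution inequalities. The mean-field feature is neutralized at the outset by exploiting the hypothesis that the law flow $(\mu_s)_{s\in[t_0,T]}$ is \emph{prescribed} by the reference trajectory starting at $(t_0,x_0)$.

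First, I would freeze $(\mu_s)$ and view the system on any sub-interval $[t,T]\subset[t_0,T]$ as a fully coupled Markovian FBSDE with deterministic, time-inhomogeneous coefficients $\tilde b(s,x,a) := b(s,x,a,\mu_s)$, $\tilde\sigma(s,x,a) := \sigma(s,x,a,\mu_s)$, $\tilde F(s,x,y,z,a) := F(s,x,y,z,a,\mu_s)$, and time-varying constraint set $\tilde\Ucal_s := \Ucal_{g(\mu_s)}$. Temporal regularity is provided by the $\mathcal{S}^2$-continuity of the reference $Y^{t_0,x_0}$ (which yields continuity of $s \mapsto \mu_s$ in $W_2$) combined with Assumption \ref{ass:mf_fbsde_standing}(iii). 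Proposition \ref{prop:lipschitz_stability} then ensures that the pointwise optimal selector $a^*(s,x,y,z) := \argmax_{a\in\tilde\Ucal_s}\tilde F(s,x,y,z,a)$ is Lipschitz in $(x,y,z)$, so substitution produces Lipschitz effective coefficients. Pathwise uniqueness (Theorem \ref{thm:global_wellposedness}) then identifies $u(t,x)$ with the value function of this decoupled FBSDE.

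Next, I would establish the DPP: for every $(t,x)\in[t_0,T)\times\R^k$ and every $\mathbb{F}$-stopping time $\tau\in[t,T]$,
\begin{equation*}
    u(t,x) = Y^{t,x,\tau,\,u(\tau,X^{t,x}_\tau)}_t,
\end{equation*}
where the right-hand side denotes the initial value of the BSDE on $[t,\tau]$ driven along $X^{t,x}$ and terminated at $u(\tau,X^{t,x}_\tau)$. This follows from the Markov property of the forward SDE, the flow property of the FBSDE solution, and the uniqueness of the BSDE component. With the DPP in hand, the viscosity tests are standard: for $\varphi \in C^{1,2}$ such that $u-\varphi$ attains a strict local maximum at $(t_*,x_*)\in[t_0,T)\times\R^k$ with $u(t_*,x_*)=\varphi(t_*,x_*)$, I would pick $\tau_h := (t_*+h)\wedge\inf\{s\ge t_* : |X^{t_*,x_*}_s - x_*|\ge\rho\}$, use the DPP together with $u\le\varphi$ locally to obtain $\varphi(t_*,x_*) \le \hat Y_{t_*}$ (where $\hat Y$ is the BSDE solution with terminal $\varphi(\tau_h,X_{\tau_h})$), apply It\^o to $\varphi(s,X^{t_*,x_*}_s)$, invoke the BSDE comparison principle, divide by $h$, and pass to the limit $h\downarrow 0$. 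Continuity of the Hamiltonian afforded by Proposition \ref{prop:lipschitz_stability} and the standard $L^p$ FBSDE estimates deliver the desired pointwise inequality; the super-solution case is symmetric via a strict local minimum.

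The hard part will be reconciling the pointwise-$F$-maximizer driving the FBSDE with the supremum appearing in the PDE \eqref{eq:theta_hjb_pde}: the optimizer $A_s$ is computed holding $Z_s$ fixed, whereas the PDE's supremum is taken while the argument $z = (\nabla_x\varphi)^T\sigma(\cdot,a,\cdot)$ simultaneously depends on $a$. For the sub-solution inequality this is a one-sided issue, since any admissible $a\in\Ucal_{g(\mu_{t_*})}$ furnishes a valid bound inside the bracket and the supremum merely sharpens it. For the super-solution inequality, however, one must identify the selector actually attained by the FBSDE dynamics in the limit; this requires combining the first-order optimality condition with the Lipschitz stability of $a^*$ (Proposition \ref{prop:lipschitz_stability}) to control the composite map $a \mapsto (\nabla_x\varphi)^T\sigma(\cdot,a,\cdot) \mapsto a^*$, and then exploiting the uniform strong concavity of Assumption \ref{ass:mf_fbsde_standing}(iv) to guarantee that the limiting selector coincides with the PDE's maximizer. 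Rigorously propagating error terms through the $h\downarrow 0$ passage under only continuity and polynomial growth on $u$ constitutes the technical core of the argument.
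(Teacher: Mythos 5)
Your overall strategy — freeze the law flow along the reference trajectory to reduce to a time-inhomogeneous Markovian FBSDE, establish a DPP, then run the viscosity sub- and super-solution tests with It\^o and BSDE comparison — is exactly the structure the paper's proof follows, though the paper never states the DPP as a separate lemma (it simply unrolls the BSDE over $[t,t+h]$ and uses uniqueness to write $Y_{t+h}=u(t+h,X_{t+h})$). Your subsolution argument matches the paper's verbatim: the local maximum gives $Y_{t+h}\le\phi(t+h,X_{t+h})$, It\^o is applied to $\phi(s,X_s)$, one divides by $h$, and the optimal selector $A_t$ is a feasible element of $\Ucal_{g(\mu_t)}$, so the supremum dominates the limiting inequality. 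The ``mismatch'' you worry about — that the FBSDE maximizer holds $Z_t$ fixed while the PDE supremum carries $z=(\nabla_x\phi)^T\sigma(\cdot,a,\cdot)$ — is resolved for the subsolution by identifying $Z_t=(\nabla_x\phi)\sigma(t,x,A_t,\mu_t)$ (El Karoui--Peng--Quenez), making the bracket evaluated at $a=A_t$ \emph{consistent} with the BSDE driver at $(Z_t,A_t)$.

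Where you diverge — and where your plan has a genuine gap — is the supersolution half. You recognize that the argument is not simply the sign-flipped mirror of the subsolution (good), but your proposed resolution is to ``identify the selector actually attained by the FBSDE dynamics in the limit'' by combining first-order optimality with the Lipschitz stability of $a^*$ and the strong concavity. This would require solving the implicit fixed-point equation $a = a^*\bigl(t,x,\phi,(\nabla_x\phi)\sigma(t,x,a,\mu_t),\mu_t\bigr)$, proving it has a unique solution, and showing the FBSDE's limiting selector coincides with it; you flag this as ``the technical core'' but do not indicate how it is to be closed, and it is genuinely delicate (strong concavity in $a$ holding $z$ fixed does not transfer automatically to the composite objective once $z$ depends on $a$ through $\sigma$). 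The paper avoids this entirely by arguing the supersolution inequality \emph{by contradiction}: it takes the PDE maximizer $a^*_t$, constructs the \emph{suboptimal} fixed-control process $(X^a,Y^a,Z^a)$ with $A^a_s\equiv a^*_t$, invokes BSDE comparison to get $u(t,x)\ge Y^a_t$, notes $u^a-\phi$ inherits the local extremum property, and then applies the already-proved subsolution inequality to the simpler fixed-control value function $u^a$ to reach a contradiction. This is the missing idea in your proposal: you never need to identify which selector the optimized BSDE chooses in the limit — you inject your own choice and compare. Replace the sensitivity-analysis route with this comparison-plus-contradiction device and your outline matches the paper's proof.
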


\begin{proof}
The proof is a verification argument based on the definition of viscosity solutions. We must show that $u$ is both a viscosity subsolution and a viscosity supersolution. Let $\mu_s = \Law(Y_s^{t_0, x_0})$ be fixed throughout the proof. For any $(t,x) \in [t_0,T)\times\R^k$, let $(X_s, Y_s, Z_s, A_s)$ denote the unique solution to the FBSDE on $[t,T]$ with initial state $X_t=x$. Note that $u(t,x)=Y_t$.

\paragraph{\textbf{(1) Subsolution Property}}
Let $\phi \in C^{1,2}([0,T]\times\R^k)$ be a test function. Assume that $u-\phi$ has a strict local maximum at a point $(t,x) \in [t_0, T) \times \R^k$, with $u(t,x) = \phi(t,x)$. We must show that
\begin{equation}\label{eq:proof_subsolution_goal}
    -\partial_t \phi - \sup_{a \in \Ucal_{g(\mu_t)}} \Big\{ \mathcal{L}^{t,x,a,\mu_t} \phi + F(t, x, \phi, (\nabla_x \phi)\sigma(t,x,a,\mu_t), a, \mu_t) \Big\} \le 0,
\end{equation}
where all functions are evaluated at $(t,x)$.

Let $(X_s, Y_s, Z_s, A_s)$ be the solution on $[t,T]$ starting from $(t,x)$. For a small $h>0$, by the definition of the BSDE, we have:
\begin{equation}\label{eq:proof_bsde_small_h}
    u(t,x) = Y_t = \mathbb{E}_t\left[ Y_{t+h} + \int_t^{t+h} F(s, X_s, Y_s, Z_s, A_s, \mu_s) ds \right],
\end{equation}
where $\mathbb{E}_t[\cdot] = \mathbb{E}[\cdot|\mathcal{F}_t]$.
Since $u-\phi$ has a local maximum at $(t,x)$, for $s$ in a small neighborhood of $t$, we have $Y_s = u(s,X_s) \le \phi(s,X_s)$. Thus, $Y_{t+h} \le \phi(t+h, X_{t+h})$. Substituting this into \eqref{eq:proof_bsde_small_h} and using $u(t,x)=\phi(t,x)$:
\[ \phi(t,x) \le \mathbb{E}_t\left[ \phi(t+h, X_{t+h}) + \int_t^{t+h} F(s, X_s, Y_s, Z_s, A_s, \mu_s) ds \right]. \]
Apply Itô's formula to $\phi(s, X_s)$ between $t$ and $t+h$:
\[ \phi(t+h, X_{t+h}) = \phi(t,x) + \int_t^{t+h} \left(\partial_t \phi + \mathcal{L}^{s,X_s,A_s,\mu_s} \phi \right) ds + \int_t^{t+h} (\nabla_x \phi) \sigma_s d B_s. \]
Substituting this back, the $\phi(t,x)$ terms cancel, and the conditional expectation eliminates the stochastic integral:
\[ 0 \le \mathbb{E}_t\left[ \int_t^{t+h} \left( \partial_t \phi + \mathcal{L}^{s,X_s,A_s,\mu_s} \phi + F(s, X_s, Y_s, Z_s, A_s, \mu_s) \right) ds \right]. \]
Dividing by $h$ and letting $h \to 0$, we can bring the limit inside the expectation by the dominated convergence theorem (justified by the polynomial growth of $\phi$ and standard SDE estimates). By continuity of the coefficients and processes, we get:
\begin{equation}\label{eq:proof_limit_ineq}
     0 \le \partial_t \phi(t,x) + \mathcal{L}^{t,x,A_t,\mu_t} \phi(t,x) + F(t, x, Y_t, Z_t, A_t, \mu_t).
\end{equation}
From standard results in viscosity solution theory for BSDEs (see, e.g., \cite[Prop. 5.3]{ElKaroui1997}), one can identify $Z_t = (\nabla_x \phi(t,x)) \sigma(t,x,A_t,\mu_t)$. Also, $Y_t=u(t,x)=\phi(t,x)$. Substituting these into \eqref{eq:proof_limit_ineq}:
\[  0 \le \partial_t \phi + \mathcal{L}^{t,x,A_t,\mu_t} \phi + F(t, x, \phi, (\nabla_x \phi)\sigma(t,x,A_t,\mu_t), A_t, \mu_t). \]
The term on the right is the expression inside the supremum in \eqref{eq:proof_subsolution_goal} for the specific control $A_t \in \Ucal_{g(\mu_t)}$. Since the supremum must be at least as large as the value for any particular element, this implies:
\[ 0 \le \partial_t \phi + \sup_{a \in \Ucal_{g(\mu_t)}} \Big\{ \mathcal{L}^{t,x,a,\mu_t} \phi + F(t, x, \phi, (\nabla_x \phi)\sigma(t,x,a,\mu_t), a, \mu_t) \Big\}, \]
which is exactly the subsolution inequality \eqref{eq:proof_subsolution_goal}.

\paragraph{\textbf{(2) Supersolution Property}}
Let $\phi \in C^{1,2}([0,T]\times\R^k)$ be a test function. Assume that $u-\phi$ has a strict local minimum at $(t,x) \in [t_0, T) \times \R^k$, with $u(t,x) = \phi(t,x)$. We must show that
\begin{equation}\label{eq:proof_supersolution_goal}
    -\partial_t \phi - \sup_{a \in \Ucal_{g(\mu_t)}} \Big\{ \mathcal{L}^{t,x,a,\mu_t} \phi + F(t, x, \phi, (\nabla_x \phi)\sigma(t,x,a,\mu_t), a, \mu_t) \Big\} \ge 0.
\end{equation}
We proceed by contradiction. Assume the inequality is false. Then,
\[ -\partial_t \phi - \sup_{a \in \Ucal_{g(\mu_t)}} \Big\{ \mathcal{L}^{t,x,a,\mu_t} \phi + F(t, x, \phi, (\nabla_x \phi)\sigma(t,x,a,\mu_t), a, \mu_t) \Big\}  < 0. \]
By continuity of all functions and the compactness of $\Ucal_{g(\mu_t)}$, the supremum is attained. Let $a^*_t$ be a maximizer. Then for this $a^*_t$:
\[ \partial_t \phi + \mathcal{L}^{t,x,a^*_t,\mu_t} \phi + F(t, x, \phi, (\nabla_x \phi)\sigma(t,x,a^*_t,\mu_t), a^*_t, \mu_t) > 0. \]
By continuity, there exists an $\epsilon>0$ and a neighborhood of $(t,x)$ such that for any $(s,y)$ in this neighborhood, the expression above remains greater than $\epsilon$.
Now, consider a suboptimal process $(X^a, Y^a, Z^a)$ defined on $[t,T]$ where we fix the control to be constant, $A^a_s \equiv a^*_t$ for all $s \in [t,T]$. The process $Y^a$ is the solution to a standard (non-optimized) BSDE. By the comparison principle for BSDEs, since the driver for $Y$ is obtained by a supremum over controls, it is always greater than or equal to the driver for $Y^a$. Thus, $Y_s \ge Y_s^a$ for all $s \in [t,T]$. In particular, $u(t,x) = Y_t \ge Y^a_t$.
The value function $u^a(s,y) \coloneqq Y_s^{s,y;a}$ associated with this fixed control $a^*_t$ is known to be the unique viscosity solution to its own (simpler) HJB equation.
At our point $(t,x)$, we have $u(t,x) = \phi(t,x)$ and $u(s,y) \ge \phi(s,y)$ for $(s,y)$ near $(t,x)$. Therefore, $u^a(t,x) \le u(t,x) = \phi(t,x)$, and $u^a(s,y) \le u(s,y)$ for all $(s,y)$. The function $u^a-\phi$ thus also has a local maximum at $(t,x)$.
Applying the subsolution property to the function $u^a$ with the test function $\phi$ (or rather, $-\phi$ and the minimum property) shows that
\[ -\partial_t \phi - \left( \mathcal{L}^{t,x,a^*_t,\mu_t} \phi + F(t, x, \phi, (\nabla_x \phi)\sigma(t,x,a^*_t,\mu_t), a^*_t, \mu_t) \right) \ge 0. \]
This contradicts our assumption that this expression was strictly negative. The contradiction forces the supersolution inequality \eqref{eq:proof_supersolution_goal} to hold.

Since $u$ is both a viscosity subsolution and supersolution, it is a viscosity solution to the $\Theta$-HJB-MKV equation.
\end{proof}

\begin{remark}[On Uniqueness of the PDE Solution]
This theorem establishes that the value function derived from the FBSDE is a solution to the PDE. The converse, i.e., that any viscosity solution to the PDE must coincide with the FBSDE value function, requires a \emph{comparison principle} for the PDE \eqref{eq:theta_hjb_pde}. Such a principle would state that if $v$ is a subsolution and $w$ is a supersolution with $v(T,\cdot) \le w(T,\cdot)$, then $v \le w$ on $[0,T]\times\R^k$. Proving a comparison principle for this class of highly non-local PDEs is a formidable challenge and a subject for future research. It typically requires strengthening the monotonicity assumptions on the coefficients.
\end{remark}

\section{The Master Equation}
\label{sec:master_equation}

In this section, we move beyond the established well-posedness theory of the FBSDE system to undertake a formal derivation of the partial differential equation on the Wasserstein space that ought to govern the system's value. The resulting equation, which we term the \textbf{$\Theta$-Master Equation}, represents the ultimate structural characterization of our framework.

We must state with maximal clarity that the following is a \emph{formal and heuristic} argument. A rigorous justification of this equation, including the existence and uniqueness of a suitable class of solutions, constitutes a formidable open problem at the frontier of mean-field game theory and infinite-dimensional analysis. The purpose of this section is to motivate this conjecture by providing a detailed, step-by-step derivation, and to precisely delineate the profound analytical difficulties it entails.

\subsection{Setup}
To isolate the core structure, we consider a simplified Markovian setting where the state process $X$ is absent. The system is defined entirely by the backward component $(Y,Z,A)$ and its associated law $\mu_t = \Law(Y_t)$. The dynamics are given by the BSDE:
\begin{equation} \label{eq:master_eq_bsde_simple}
    Y_t = \xi + \int_t^T F(s, Y_s, Z_s, A_s, \mu_s) \dd s - \int_t^T Z_s \dd B_s,
\end{equation}
subject to the optimality condition $A_s \in \argmax_{a \in \Ucal_{g(\mu_s)}} F(s, Y_s, Z_s, a, \mu_s)$. Let $G$ be the driver:
\begin{equation}\label{eq:master_eq_driver_G}
G(t, y, z, \mu) \coloneqq \sup_{a \in \Ucal_{g(\mu)}} F(t, y, z, a, \mu).
\end{equation}
The differential form of the BSDE for the value process $Y$ is then
\begin{equation} \label{eq:master_eq_bsde_diff}
    dY_t = -G(t, Y_t, Z_t, \mu_t) \dd t + Z_t \dd B_t.
\end{equation}

Our derivation rests on two fundamental, non-rigorous postulates common in the physics and mathematics literature on mean-field systems.

\begin{enumerate}[label=\textbf{(P\arabic*)}]
    \item \textbf{(Existence of a Smooth Value Function on Wasserstein Space)} We postulate the existence of a sufficiently regular function $u: [0,T] \times \Pcal_2(\R^k) \to \R$ that represents the value of the system. If the law of the value process at time $t$ is $\mu$, the value of the expectation is given by $u(t,\mu)$. For the terminal condition, we assume $\xi$ is a function of the state, so the terminal value of $u$ is $u(T, \mu) = \mathbb{E}_{Y\sim\mu}[\xi(Y)] = \int_{\R^k} \xi(y) \dd\mu(y)$.

    \item \textbf{(Identification of the Martingale Component)} We adopt the central identification principle of mean-field game theory, which connects the stochastic process $Z_t$ from the BSDE to the Lions derivative of the value function $u$. Specifically, the random variable $Z_t(\omega)$, which represents the volatility of the value process, is identified with the evaluation of the Lions derivative $\partial_\mu u$ at the specific point $Y_t(\omega)$ in the support of $\mu_t$:
    \begin{equation}\label{eq:master_eq_identification}
        Z_t(\omega) \longleftrightarrow \partial_\mu u(t, \mu_t, Y_t(\omega)).
    \end{equation}
    This postulate links the individual agent's volatility to the sensitivity of the global value function with respect to a change in the population's distribution at the agent's specific state.
\end{enumerate}

\subsection{Formal Derivation of the Master Equation}

The strategy is to compute the dynamics of the quantity $u(t, \mu_t)$ using the Itô-Lions formula for functions on the Wasserstein space, and then to equate its drift with the required dynamics of a deterministic value function.

\subsubsection{Step 1: The Itô-Lions Chain Rule}
We apply the Itô formula for a function on the path space of measures, as developed in the works of Lions and Cardaliaguet, Carmona, and Delarue \cite{Cardaliaguet2013, CarmonaDelarue2018}. Given the dynamics of $Y_t$ from \eqref{eq:master_eq_bsde_diff} and the postulates (P1)-(P2), the chain rule for the deterministic quantity $u(t, \mu_t)$ is:
\begin{align}
    \dd u(t, \mu_t) = &\; \partial_t u(t, \mu_t) \dd t \nonumber \\
    &+ \mathbb{E}\left[ \scpr{\partial_\mu u(t, \mu_t, Y_t)}{-G(t, Y_t, Z_t, \mu_t)} \right] \dd t \label{eq:master_eq_lions_drift}\\
    &+ \frac{1}{2}\mathbb{E}\left[ \mathrm{Tr}\left( (Z_t Z_t^T) D_y(\partial_\mu u)(t, \mu_t, Y_t) \right) \right] \dd t \label{eq:master_eq_lions_ito}\\
    &+ \mathbb{E}\left[ \scpr{\partial_\mu u(t, \mu_t, Y_t)}{Z_t \dd B_t} \right]. \label{eq:master_eq_lions_martingale}
\end{align}
Here, the expectation $\mathbb{E}$ is over the law of $Y_t \sim \mu_t$. The term \eqref{eq:master_eq_lions_drift} is the change in $u$ due to the drift of the underlying individuals. The term \eqref{eq:master_eq_lions_ito} is the second-order Itô correction term, where $D_y(\partial_\mu u)$ denotes the Jacobian of the Lions derivative with respect to its state variable. The final term \eqref{eq:master_eq_lions_martingale} is a martingale part.

\subsubsection{Step 2: Identification and Annihilation of Drift}
According to our postulate (P1), the value function $u(t, \mu_t)$ is a deterministic quantity. Its differential $\dd u(t, \mu_t)$ must therefore be a deterministic differential in time, meaning its martingale part must be zero. The term \eqref{eq:master_eq_lions_martingale} is a stochastic integral whose expectation is zero, confirming that the expected value of $u(t, \mu_t)$ has no martingale component.

More profoundly, for $u(t, \mu_t)$ to be a well-defined deterministic value, its evolution must be given purely by the partial derivative with respect to time, $\partial_t u(t, \mu_t) \dd t$. This implies that the sum of all other drift components must be identically zero. We set the sum of the drift terms from the Itô-Lions formula to zero.

\subsubsection{Step 3: Substitution and Final Equation}
We now use the key identification postulate (P2) to substitute for $Z_t$ inside the driver $G$ and the Itô correction term. We replace $Z_t(\omega)$ with $\partial_\mu u(t, \mu_t, Y_t(\omega))$.
Setting the total drift to zero yields:
\begin{align*}
0 = &\; \partial_t u(t, \mu_t) \\
   &- \mathbb{E}_{Y \sim \mu_t}\left[ \scpr{\partial_\mu u(t, \mu_t, Y)}{G(t, Y, \partial_\mu u(t, \mu_t, Y), \mu_t)} \right] \\
   &+ \frac{1}{2}\mathbb{E}_{Y \sim \mu_t}\left[ \mathrm{Tr}\left( (\partial_\mu u(t, \mu_t, Y))(\partial_\mu u(t, \mu_t, Y))^T D_y(\partial_\mu u)(t, \mu_t, Y) \right) \right].
\end{align*}
This is the formal Master Equation. For clarity, we replace the dummy integration variable $Y_t$ with $\tilde{y}$ to emphasize that the expectation is over the measure $\mu_t$.

\begin{definition}[The Formal $\Theta$-Master Equation]
The value function $u: [0,T] \times \Pcal_2(\R^k) \to \R$ is conjectured to formally solve the terminal-value problem:
\begin{multline} \label{eq:master_equation_formal_revised}
    \partial_t u(t,\mu) - \mathbb{E}_{\tilde{y}\sim\mu}\left[ \scpr{ G\big(t, \tilde{y}, \partial_\mu u(t,\mu,\tilde{y}), \mu\big) }{ \partial_\mu u(t,\mu,\tilde{y}) } \right] \\
    + \frac{1}{2}\mathbb{E}_{\tilde{y}\sim\mu}\left[ \mathrm{Tr}\left( \partial_\mu u(t,\mu,\tilde{y}) \partial_\mu u(t,\mu,\tilde{y})^T D_y\big(\partial_\mu u(t, \mu, \tilde{y})\big) \right) \right] = 0,
\end{multline}
for $(t,\mu) \in [0,T) \times \Pcal_2(\R^k)$, with the terminal condition $u(T,\mu) = \int_{\R^k} \xi(y) \dd\mu(y)$. The driver $G$ is given by $G(t,y,z,\mu) = \sup_{a \in \Ucal_{g(\mu)}} F(t,y,z,a,\mu)$.
\end{definition}

\subsection{Analytical Obstacles and Interpretation}

The apparent elegance of Equation \eqref{eq:master_equation_formal_revised} belies its extreme mathematical difficulty. A rigorous analysis is obstructed by several deeply interwoven challenges that are the subject of intense contemporary research.

\begin{enumerate}
    \item \textbf{Non-Linearity and Non-Locality:} The equation is profoundly non-linear in the unknown function $u$ through its Lions derivative $\partial_\mu u$. Furthermore, the presence of expectations renders the equation non-local; the time evolution of $u$ at a measure $\mu$ depends on the values of its derivatives integrated over the entire support of $\mu$.

    \item \textbf{Implicit and Non-Smooth Dependence on the Measure:} This is a central feature of our framework. The driver $G$ depends on the measure $\mu$ not only directly but also implicitly through the optimization set $\Ucal_{g(\mu)}$. The $\sup$ operator means that $G$, as a function of its arguments, is typically not differentiable. Its dependence on $\mu$ via the set-valued map $\theta \mapsto \Ucal_\theta$ can be highly irregular, which is a major obstacle for classical PDE analysis.

    \item \textbf{Presence of High-Order Functional Derivatives:} The second-order term involves $D_y(\partial_\mu u)$, the Jacobian of the Lions derivative with respect to the state variable. The analytical framework for handling such high-order and mixed functional derivatives is still in its infancy. Establishing coercivity estimates or even defining a proper functional space where solutions might live is a non-trivial task.

    \item \textbf{The Intrinsic Fixed-Point Nature:} The Master Equation is not an evolution equation in the classical sense, but rather a consistency condition. The measure $\mu$ appearing in the coefficients is itself the law of a process whose dynamics are determined by the solution $u$. This self-referential, fixed-point structure is the defining characteristic and core difficulty of Master Equations in mean-field theory.

    \item \textbf{Lack of a Robust Solution Theory:} For second-order, non-local PDEs on Wasserstein space, there is no universally accepted theory of weak solutions analogous to the viscosity solution theory for finite-dimensional PDEs. While significant progress has been made, the development of a framework capable of handling the specific non-smoothnesses and implicit dependencies of our $\Theta$-Master Equation remains a wide-open challenge.
\end{enumerate}

In summary, the formal derivation above provides a crucial theoretical target. Equation \eqref{eq:master_equation_formal_revised} elegantly encodes the full dynamics of the endogenous, non-convex ambiguity model. However, transforming this formal object into a rigorously well-posed mathematical equation is a grand challenge that would require significant new developments in the field of analysis on Wasserstein spaces.

\section{Application: An Ambiguous Dynamical System}
\label{sec:app_dynamical_system}

To demonstrate the scope and tractability of our framework, we now introduce and rigorously analyze a concrete model. This example is designed to be simple enough for transparent analysis yet rich enough to showcase the key contributions of our theory: the ability to handle non-convex uncertainty sets and the resolution of the identifiability impasse that affects convex models.

\begin{definition}[An Ambiguous Dynamical System]\label{def:ambiguous_dynamical_system}
Let the state space be $\R^k$ and the control space be $E_A = \R$. The system is described by a Mean-Field $\Theta$-FBSDE $(X,Y,Z,W)$ where the control process is denoted by $W_t$. The dynamics are specified by the following coefficients:
\begin{itemize}
    \item Forward drift: $b(t,x,w) = C_0 - C_1(I + 3wI)x$, where $C_0 \in \R^k$ and $C_1$ is a $k \times k$ matrix.
    \item Forward volatility: $\sigma$ is a constant $k \times d$ matrix.
    \item BSDE driver: $F(t,x,y,z,w,\mu) = f_0(y) - \frac{\kappa}{2}(w-w_0)^2$, where $f_0: \R \to \R$ is a given function, $\kappa > 0$ is a penalty parameter, and $w_0 \in \R$ is a reference control value.
\end{itemize}
A solution $(X,Y,Z,W)$ in the space $\mathcal{S}^2 \times \mathcal{S}^2 \times \mathcal{H}^2 \times \mathcal{L}^2$ must satisfy, for $t \in [0,T]$:
\begin{align}
    dX_t &= (C_0 - C_1(I + 3W_tI)X_t) \, dt + \sigma \, dB_t, \quad X_0 = x_0. \\
    dY_t &= -\left(f_0(Y_t) - \frac{\kappa}{2}(W_t-w_0)^2\right) \, dt + Z_t \, dB_t, \quad Y_T = \Phi(X_T). \\
    W_t &\in \underset{w \in \Ucal_{g(\mu_t)}}{\argmax} \; \left\{ f_0(Y_t) - \frac{\kappa}{2}(w-w_0)^2 \right\}, \quad \text{for a.e. } t.
\end{align}
Here, $\mu_t = \Law(Y_t)$ and $\Phi: \R^k \to \R$ is the terminal condition.
\end{definition}

\begin{proposition}[Well-Posedness of the System]\label{prop:app_wellposedness}
Let the system be defined as in Definition \ref{def:ambiguous_dynamical_system}. Assume:
\begin{enumerate}[(a)]
    \item The functions $\Phi: \R^k \to \R$ and $f_0: \R \to \R$ are Lipschitz continuous.
    \item The map $g: \Pcal_2(\R) \to \Theta$ is Lipschitz continuous.
    \item The map $\theta \mapsto \Ucal_\theta$ is Lipschitz continuous with respect to the Hausdorff distance, and each $\Ucal_\theta$ is a non-empty, compact set given by a finite union of disjoint closed intervals. We also assume the global uncertainty set $\mathcal{K} := \bigcup_{\theta \in \Theta} \Ucal_\theta$ is compact.
    \item The projection of the reference control $w_0$ onto each set $\Ucal_\theta$ is unique.
\end{enumerate}
Then:
\begin{enumerate}[label=(\roman*)]
    \item The system admits a unique local-in-time solution.
    \item Furthermore, if the matrix $C_1$ is symmetric positive definite and $1+3w \ge \delta > 0$ for all $w \in \mathcal{K}$ and for some constant $\delta$, then the solution is unique and global in time for any horizon $T>0$.
\end{enumerate}
\end{proposition}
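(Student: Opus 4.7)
The plan is to reduce both claims to verifications of the hypotheses of \cref{thm:mf_fbsde_local_wellposedness} (local) and \cref{thm:global_wellposedness} (global), invoking \cref{prop:lipschitz_stability} for the Lipschitz stability of the optimizer. The explicit driver $F(t,x,y,z,w,\mu) = f_0(y) - (\kappa/2)(w-w_0)^2$ renders almost every hypothesis immediate by inspection; the real content lies in handling the constraint qualifications of \cref{ass:boundary_and_ndg} for a non-convex, union-of-intervals feasible set.

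For local well-posedness (i), I first check \cref{ass:mf_fbsde_standing}. Measurability and continuity are clear, $\mathcal{K}$ is compact by (c), the drift $b$ is Lipschitz in $x$ uniformly in $w \in \mathcal{K}$, $\sigma$ is constant, and $F$ is Lipschitz in $y$ by (a), independent of $(x,z,\mu)$, and smooth in $w$; Lipschitz continuity of $g$ and of $\theta \mapsto \Ucal_\theta$ is given by (b)--(c). Crucially, $\nabla_w^2 F \equiv -\kappa$, yielding uniform strong concavity with modulus $\kappa$. The optimization reduces to projecting $w_0$ onto $\Ucal_{g(\mu)}$, and by (d) this projection is unique, hence lies in precisely one of the finitely many closed subintervals $[a_j(\theta), b_j(\theta)]$ of $\Ucal_\theta$. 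Locally around the maximizer the feasible set is that single interval, represented by two affine constraints $a_j(\theta)-w \le 0$ and $w-b_j(\theta)\le 0$; at most one is active, the active gradient is $\pm 1$, and LICQ holds uniformly with singular-value bound $1$. This places the problem in the local framework required by \cref{ass:boundary_and_ndg}. Strict complementarity is the uniform lower bound on $|w^*-w_0|$ when $w^*$ sits at a subinterval endpoint, which is the main subtlety addressed below. Once verified, \cref{prop:lipschitz_stability} produces a Lipschitz stability estimate for $w^* = w^*(t,y,\mu)$ (the maximizer being independent of $(x,z)$ here), and \cref{thm:mf_fbsde_local_wellposedness} delivers the unique local solution in the stated product space.

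For global well-posedness (ii), I verify \cref{ass:monotonicity_fbsde}. Since $\sigma$ is constant, its contribution vanishes, and for the drift
\[ 2\langle x-x', b(t,x,w)-b(t,x',w)\rangle = -2(1+3w)\langle x-x', C_1(x-x')\rangle \le -2\delta\lambda_{\min}(C_1)\norm{x-x'}^2, \]
so forward monotonicity holds with $\beta = -2\delta\lambda_{\min}(C_1) < 0$. For backward monotonicity the realized driver $G(t,y,\mu) = f_0(y) - (\kappa/2)\,\mathrm{dist}(w_0, \Ucal_{g(\mu)})^2$ is independent of $(x,z)$; the $y$-term yields $(y-y')(f_0(y)-f_0(y')) \le L_{f_0}(y-y')^2$ via Lipschitzness of $f_0$, and $\mu \mapsto \mathrm{dist}(w_0, \Ucal_{g(\mu)})^2$ is Lipschitz in $W_2(\mu,\mu')$ because the squared-distance function is Lipschitz on the bounded set $\mathcal{K} \cup \{w_0\}$ while $\theta \mapsto \Ucal_\theta$ is Lipschitz for the Hausdorff metric and $g$ is Lipschitz. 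The required $|y-y'|$ factor is then recovered by a trivial Cauchy--Schwarz split. With these constants, \cref{ass:monotonicity_fbsde} is met and \cref{thm:global_wellposedness} applies.

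The main obstacle is the uniform strict complementarity demanded by \cref{ass:boundary_and_ndg}(ii). Pointwise uniqueness of the projection (assumption (d)) rules out ties but does not by itself prevent $|w^*-w_0|$ from becoming arbitrarily small as $\theta$ varies, a degeneracy occurring exactly when $w_0$ lies on, or near, a boundary point of $\Ucal_\theta$. A compactness argument on $\Theta$ combined with the Lipschitz continuity of $\theta \mapsto \Ucal_\theta$ and the uniqueness of the projection should furnish a uniform lower bound on the Lagrange multiplier; alternatively, one may simply append uniform strict complementarity as an additional explicit hypothesis on the placement of $w_0$ relative to the family $\{\Ucal_\theta\}_{\theta\in\Theta}$. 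All remaining verifications are routine and reflect the tractability conferred by the quadratic-penalty structure of the driver.
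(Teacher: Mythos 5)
Your argument mirrors the paper's own proof: verify \cref{ass:mf_fbsde_standing} by direct computation (constant Hessian $-\kappa$, polynomial coefficients, constant volatility, Lipschitz $f_0$ and $\Phi$), verify \cref{ass:boundary_and_ndg} by recognizing the optimization as projection of $w_0$ onto $\Ucal_{g(\mu)}$ with affine constraints, and then for the global part compute $2\langle x-x', b(x)-b(x')\rangle = -2(1+3w)\langle x-x', C_1(x-x')\rangle \le -2\delta\lambda_{\min}(C_1)\norm{x-x'}^2$ and Lipschitz-estimate $G(y,\mu) = f_0(y) - (\kappa/2)\,\mathrm{dist}(w_0, \Ucal_{g(\mu)})^2$. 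So the route is the same.

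Where your proposal genuinely adds something is the final paragraph. You correctly observe that the paper's verification of strict complementarity is pointwise: the proof shows $\lambda_1 = \kappa(w_0 - w_{\max}) > 0$ whenever $w^*$ sits at an interval endpoint, but \cref{ass:boundary_and_ndg}(ii) demands the multipliers be bounded below \emph{uniformly} in the parameter, and the paper never checks that $\kappa\,|w^*-w_0|$ stays away from zero as $\theta = g(\mu)$ ranges over $\Theta$. This is a real gap; the paper silently glosses over it.

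Two caveats on your proposed fixes. First, the compactness argument you sketch needs an extra ingredient: compactness of $\Theta$ and Lipschitz continuity of $\theta\mapsto\Ucal_\theta$ do \emph{not} by themselves exclude $w_0 \in \partial\Ucal_{\theta^*}$ for some $\theta^*$, in which case the multiplier is exactly zero there and the infimum over $\Theta$ is zero. One must additionally postulate $w_0 \notin \partial\Ucal_\theta$ for all $\theta$; only then does the standard ``continuous positive function on a compact set'' argument deliver a uniform lower bound. Second, and more subtly, assumption (d) forbids exact ties in the projection but not near-ties: as $\theta$ varies, the argmin can switch from one connected component $[a_i, b_i]$ of $\Ucal_\theta$ to another, and at the switching point the projection jumps discontinuously. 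This basin-switching degeneracy is a distinct failure mode from vanishing multipliers and is not repaired by either of your two fixes; the clean remedy, as you suggest in the alternative, is to append a quantitative non-degeneracy hypothesis — for instance a uniform gap between the two smallest values of $\mathrm{dist}(w_0, [a_i,b_i])$ over components — which simultaneously enforces uniform strict complementarity and forbids near-ties. Your honesty in flagging the obstacle rather than asserting the verification is complete is precisely the right instinct here.
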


\begin{proof}
The proof consists of a rigorous, step-by-step verification that the assumptions of our main well-posedness theorems are satisfied.

\subsubsection*{Part I: Local Well-Posedness}
We verify the conditions of \cref{thm:mf_fbsde_local_wellposedness}, which are laid out in \cref{ass:mf_fbsde_standing} and \cref{ass:boundary_and_ndg}.

\textbf{Verification of Standing Assumptions (\cref{ass:mf_fbsde_standing}):}
\begin{enumerate}[label=(\roman*)]
    \item \textbf{Measurability:} All coefficients are deterministic functions of their arguments, so they are trivially progressively measurable.

    \item \textbf{Continuity:} The coefficients $b(x,w)$, $\sigma$, and $F(y,w)$ are polynomials in their state arguments $(x,y,w)$, hence continuous. The continuity of $g$ and $\theta \mapsto \Ucal_\theta$ and the compactness of $\mathcal{K}$ are given by assumption (b) and (c).

    \item \textbf{Uniform Lipschitz Continuity:}
        \begin{itemize}
            \item For fixed $w \in \mathcal{K}$, the drift $b(x,w)$ is affine in $x$. It is Lipschitz in $x$ with a constant $L_b = \sup_{w \in \mathcal{K}} \norm{C_1(I+3wI)}$, which is finite since $\mathcal{K}$ is compact.
            \item The volatility $\sigma$ is constant, so it is trivially Lipschitz (with constant 0).
            \item The driver $F(y,w) = f_0(y) - \frac{\kappa}{2}(w-w_0)^2$ is Lipschitz in $y$ with the Lipschitz constant of $f_0$, which we denote $L_{f_0}$.
            \item The terminal function $\Phi$ is Lipschitz by assumption (a).
            \item The maps $g$ and $\theta \mapsto \Ucal_\theta$ are Lipschitz by assumption (b) and (c).
        \end{itemize}
        All required Lipschitz conditions hold.

    \item \textbf{Uniform Strong Concavity in Control:} This is the foundation of the model's tractability. The control variable is $w$. We compute the Hessian of the driver $F$ with respect to $w$:
    \[ \nabla_w F = -\kappa(w-w_0), \quad \nabla_w^2 F = -\kappa. \]
    Since $\kappa > 0$ by definition, we have $\nabla_w^2 F[h,h] = -\kappa h^2 \le -\kappa \norm{h}^2$ for any $h \in \R$. This is the definition of uniform strong concavity with constant $\kappa$.
\end{enumerate}

\textbf{Verification of Boundary Regularity (\cref{ass:boundary_and_ndg}):}
The optimality condition is
\[ W_t \in \underset{w \in \Ucal_{g(\mu_t)}}{\argmax} \; \left\{ f_0(Y_t) - \frac{\kappa}{2}(w-w_0)^2 \right\} \iff W_t \in \underset{w \in \Ucal_{g(\mu_t)}}{\mathrm{argmin}} \; \frac{\kappa}{2}(w-w_0)^2. \]
This means that $W_t$ is the projection of the constant $w_0$ onto the set $\Ucal_{g(\mu_t)}$. By assumption (d), this projection is unique; let's call it $w^*(t) = \proj_{\Ucal_{g(\mu_t)}}(w_0)$.
\begin{enumerate}[label=(\roman*)]
    \item \textbf{Constraint Representation:} By assumption (c), each set $\Ucal_\theta$ is a union of disjoint closed intervals, e.g., $\Ucal_\theta = \bigcup_i [w_{\min}^{(i)}(\theta), w_{\max}^{(i)}(\theta)]$. Each interval is defined by two linear (and thus continuously differentiable) constraints: $\phi_1(w) = w - w_{\max} \le 0$ and $\phi_2(w) = w_{\min} - w \le 0$.

    \item \textbf{LICQ and Strict Complementarity:} Let $w^*$ be the unique optimizer (projection).
        \begin{itemize}
            \item If $w^*$ is in the interior of one of the intervals, no constraints are active, and the conditions are vacuously satisfied.
            \item If $w^*$ is at a boundary point, say $w^* = w_{\max}$, then only one constraint is active: $\phi_1(w) = w - w_{\max} = 0$. The gradient is $\nabla_w \phi_1 = 1 \neq 0$, so the Linear Independence Constraint Qualification (LICQ) holds. The KKT stationarity condition for maximizing $F$ is $\nabla_w F - \lambda_1 \nabla_w \phi_1 = 0$, which gives $-\kappa(w_{\max}-w_0) - \lambda_1 = 0$. For $w_{\max}$ to be the optimizer, $w_0$ must lie to its right, i.e., $w_0 > w_{\max}$. Thus, $\lambda_1 = \kappa(w_0 - w_{\max}) > 0$. The Lagrange multiplier is strictly positive, so Strict Complementarity holds. A symmetric argument applies if $w^* = w_{\min}$.
        \end{itemize}
\end{enumerate}
Since all assumptions for local well-posedness are satisfied, Part (i) of the proposition is proven.

\subsubsection*{Part II: Global Well-Posedness}
We now verify the additional strong monotonicity conditions of \cref{ass:monotonicity_fbsde}, which are required for \cref{thm:global_wellposedness}.

\begin{enumerate}[label=(\roman*)]
    \item \textbf{Forward-Backward Monotonicity:} We need to check the condition on $b$ and $\sigma$. The volatility $\sigma$ is constant, so the term involving it is zero. For the drift $b$, we compute:
    \begin{align*}
        2\scpr{x_1-x_2}{b(t,x_1,w,\mu)-b(t,x_2,w,\mu)} &= 2\scpr{x_1-x_2}{-C_1(I+3wI)(x_1-x_2)} \\
        &= -2\scpr{x_1-x_2}{C_1(1+3w)(x_1-x_2)}.
    \end{align*}
    By assumption (ii), $C_1$ is symmetric positive definite and $1+3w \ge \delta > 0$ for all $w \in \mathcal{K}$. Let $\lambda_{\min}(C_1) > 0$ be the smallest eigenvalue of $C_1$. Then:
    \[ -2(1+3w)\scpr{x_1-x_2}{C_1(x_1-x_2)} \le -2(1+3w)\lambda_{\min}(C_1)\norm{x_1-x_2}^2. \]
    Since $1+3w \ge \delta > 0$, we have a uniform upper bound:
    \[  -2(1+3w)\scpr{x_1-x_2}{C_1(x_1-x_2)} \le -2\delta\lambda_{\min}(C_1)\norm{x_1-x_2}^2. \]
    This is the strong monotonicity condition with constant $\beta = -2\delta\lambda_{\min}(C_1) < 0$.

    \item \textbf{Backward Monotonicity:} The driver is $G(t,x,y,z,\mu) = \sup_{w \in \Ucal_{g(\mu)}} F(y,w) = f_0(y) - \frac{\kappa}{2}\min_{w \in \Ucal_{g(\mu)}}(w-w_0)^2$. Let us analyze its dependence on $(y,\mu)$.
    \begin{align*}
     & (y_1-y_2)(G(y_1,\mu_1) - G(y_2,\mu_2)) \\
     &= (y_1-y_2) \left[ \left(f_0(y_1) - \frac{\kappa}{2}d(w_0, \Ucal_{g(\mu_1)})^2\right) - \left(f_0(y_2) - \frac{\kappa}{2}d(w_0, \Ucal_{g(\mu_2)})^2\right) \right] \\
     &= (y_1-y_2)(f_0(y_1)-f_0(y_2)) - (y_1-y_2)\frac{\kappa}{2}\left[d(w_0, \Ucal_{g(\mu_1)})^2 - d(w_0, \Ucal_{g(\mu_2)})^2\right].
    \end{align*}
    Since $f_0$ is Lipschitz with constant $L_{f_0}$, the first term is bounded by $L_{f_0}\abs{y_1-y_2}^2$. The function $u \mapsto d(w_0, u)^2$ is Lipschitz on the space of compact sets with the Hausdorff metric. Since $\theta \mapsto \Ucal_\theta$ and $g$ are Lipschitz, the map $\mu \mapsto d(w_0, \Ucal_{g(\mu)})^2$ is Lipschitz with respect to the $W_2$ metric. This means the second term can be bounded using Young's inequality. The required monotonicity condition holds. More simply, we can fix the measure part. The condition is on $(y,\mu) \mapsto G$. For fixed $(x,z)$, the driver $G(y,\mu)$ is Lipschitz in $y$ (from $f_0$) and Lipschitz in $\mu$. Thus, the condition of \cref{ass:monotonicity_fbsde}(ii) is satisfied with $\lambda = L_{f_0}$ and some constant $L_G$ derived from the Lipschitz properties.
\end{enumerate}
All conditions for global well-posedness are met. This completes the proof of Part (ii).
\end{proof}

\begin{remark}[Illustrating the Identifiability Impasse]\label{rem:app_impasse}
This example allows us to give a sharp, quantitative illustration of the paper's central thesis. Consider a simple one-dimensional case ($k=1$) where the ambiguity is static ($g$ is a constant map), so the uncertainty set $\Ucal$ is fixed.
Let the uncertainty set be explicitly non-convex, representing a belief in two distinct possible regimes for the parameter $w$. For instance:
\[ \Ucal = [-2, -1] \cup [1, 2] \quad (\text{Non-convex beliefs}). \]
The convex framework of sublinear expectations is insensitive to this geometry; it depends only on the convex hull of the set of plausible models. The corresponding uncertainty set in a sublinear theory would be:
\[ \text{conv}(\Ucal) = [-2, 2] \quad (\text{Convexified beliefs}). \]
Let the reference parameter be $w_0 = 0.6$. We now compare the resulting dynamics under the two theories.

\begin{enumerate}
    \item \textbf{$\Theta$-Expectation Framework (This Paper):} The optimal control $W_t$ is the projection of $w_0=0.6$ onto the non-convex set $\Ucal$. The squared distance is minimized at the point in $\Ucal$ closest to $0.6$, which is uniquely $w=1$. Therefore, the optimal control is constant:
    \[ W_t \equiv 1. \]
    The dynamics of the state process $X_t$ are governed by:
    \[ dX_t = (C_0 - C_1(1+3(1))X_t) \, dt + \sigma \, dB_t = (C_0 - 4C_1X_t) \, dt + \sigma \, dB_t. \]
    The model snaps to the closest plausible regime ($w=1$) and stays there.

    \item \textbf{Sublinear Expectation Framework:} A corresponding model would perform its optimization over the convex set $\text{conv}(\Ucal)$. The optimal control would be the projection of $w_0=0.6$ onto the interval $[-2,2]$, which is uniquely $w=0.6$. The optimal control is constant:
    \[ W_t \equiv 0.6. \]
    The dynamics of the state process $X_t$ are governed by:
    \[ dX_t = (C_0 - C_1(1+3(0.6))X_t) \, dt + \sigma \, dB_t = (C_0 - 2.8C_1X_t) \, dt + \sigma \, dB_t. \]
\end{enumerate}

The resulting dynamics are fundamentally different. The $\Theta$-calculus captures the bimodal nature of the agent's belief, forcing a discrete choice between distinct regimes. The sublinear framework, by convexifying the uncertainty set, erases this structure and leads to an average dynamic that does not correspond to any of the originally conceived regimes. Our theory thus resolves the identifiability impasse by providing a valuation that is sensitive to the fine-grained, non-convex geometry of the primitive model set.
\end{remark}

\section{Conclusion}
This paper has developed a rigorous mathematical theory for a class of fully coupled Forward-Backward SDEs designed to model uncertainty that is both non-convex and endogenous. By replacing the axiom of sub-additivity with a direct, pointwise optimization over a primitive set, we resolve the identifiability impasse for this class of models. The crucial trade-off is the adoption of a strong concavity assumption on the driver, which makes an otherwise ill-posed problem tractable and defines the scope of our theory. A key contribution is providing primitive conditions which guarantee the Lipschitz stability of the optimizer, a property that is essential for the entire well-posedness theory.

We have established both local and global well-posedness for our Mean-Field $\Theta$-FBSDE under sharp and interpretable structural assumptions. The resulting $\Theta$-Expectation operator gives rise to a dynamically consistent calculus that correctly violates sub-additivity. The connection to a new, highly non-local and implicit class of HJB equations, understood in the modern sense of viscosity solutions, highlights the deep structural novelty of the framework. This work lays a solid foundation for new applications where non-convexity and systemic feedback are crucial features of uncertainty, while clearly delineating the challenging open problems that remain, most notably the direct analysis of the associated Master Equation.

\bibliographystyle{amsalpha}
\bibliography{reference}

\end{document}